\documentclass[a4paper, 12pt]{article}
\usepackage[a4paper, total={6.5in, 9.5in}]{geometry}
\usepackage{graphicx} 
\usepackage{subfig}
\usepackage{multirow}%
\usepackage{amsmath,amssymb,amsfonts}%
\usepackage{amsthm}%
\usepackage{mathrsfs}%
\usepackage[title]{appendix}%
\usepackage{xcolor}%
\usepackage{textcomp}%
\usepackage{manyfoot}%
\usepackage{booktabs}%
\usepackage[ruled, linesnumbered]{algorithm2e}
\usepackage{listings}%
\usepackage{nicematrix}
\usepackage{enumitem}

\usepackage{pifont}

\usepackage{graphicx}
\usepackage{hyperref}
\hypersetup{
    hidelinks
}
\usepackage{float}
\usepackage{mathtools}
\usepackage{cleveref}

\newcommand{\NN}{\mathbb N}

\newcommand{\QQ}{\mathbb Q}

\newcommand{\ZZ}{\mathbb Z}

\DeclareUnicodeCharacter{2212}{-}

\theoremstyle{plain}
\newtheorem{theorem}{Theorem}[section]
\newtheorem*{theorem*}{Theorem}
\newtheorem{lemma}[theorem]{Lemma}
\newtheorem{proposition}[theorem]{Proposition}
\newtheorem{corollary}[theorem]{Corollary}
\newtheorem{conjecture}[theorem]{Conjecture}

\theoremstyle{definition}
\newtheorem{definition}[theorem]{Definition}
\newtheorem{example}[theorem]{Example}

\theoremstyle{remark}
\newtheorem{remark}[theorem]{Remark}

\usepackage{tikz}
\usetikzlibrary{shapes.geometric}
\usepackage{wrapfig}
\pgfdeclarelayer{nodelayer}
\pgfdeclarelayer{edgelayer}
\pgfsetlayers{nodelayer, edgelayer, main}

\title{
Distance Reduction in Bouquet Decompositions\\
and Toric Ideals of Graphs}

\author{Oliver Clarke, Dimitra Kosta and Alexander Milner}

\date{}

\begin{document}

\maketitle

\begin{abstract}
    The distance-reduction property for a generating set, i.e., a Markov basis, of a toric ideal is a condition that ensures tight connectivity of its fibres. In this paper, we study the distance-reduction property for toric ideals of graphs and move on to explore the relationship between the distance-reduction property and the bouquet structure of homogeneous toric ideals, which includes the class of toric ideals of graphs. For toric ideals of graphs which are complete intersection, we show that the minimal Markov bases are distance-reducing if and only if they distance-reduce the circuits of the ideal. We then consider how the distance-reduction properties interact with the bouquet structure of the toric ideal. Bouquets are a combinatorial  structure that capture the essential combinatorial information of the toric ideal.  Under the condition of homogeneity, we show that, for toric ideals with the same bouquet structure and signature, the distance-reduction properties are preserved. For homogeneous toric ideals whose bouquet matrix is a monomial curve in $\mathbb{A}^3$, we give necessary and sufficient conditions for when the minimal Markov bases are distance-reducing.

\end{abstract}

\tableofcontents





\section{Introduction}
\label{sec: intro}
 
A fundamental problem in Algebraic Statistics is the construction of Markov bases for toric ideals. 
Given an integer matrix $A$, a \emph{Markov basis} corresponds to a minimal generating set of the toric ideal $I_A$, and its elements can be considered as moves of a Markov chain that connects any two points in the same fiber of $A$ (see \cite{diaconis1998algebraic}).
The application of this Markov chain is in sampling from the space of so-called contingency tables, which may be used in Fisher's exact test in statistical significance testing.
In practice, we not only want connectivity in the fibers but a quick convergence rate of the Markov chain as well, i.e., a low mixing time.


This is the idea behind the \emph{distance-reduction property}, introduced by Aoki
and Takemura \cite{aoki2005distance, aoki2012markovbook}, which requires a Markov basis $M$ to reduce a certain metric distance between points in a fiber. In this case, the diameter of the fibers can be bounded from above in terms of their fiber values \cite[Section 3.1]{aoki2005distance} and these diameters can, in turn, be used to bound the time for Markov chains to reach their stationary distribution (see \cite{diaconis1998algebraic} or \cite{DSC_1998}). In particular, we will focus on the metric given by the $1$-norm on the fibers, which is not only easy to compute, but also has nice properties when working with homogeneous toric ideals.

Understanding which Markov bases are distance-reducing is a subtle question. 
By \cite{CK_2024}, for a general integer matrix $A$, 
checking whether a Markov basis $M$ is distance-reducing requires verifying the condition against every element of the Graver basis ${\rm Gr}(A)$, the set of primitive lattice vectors in $\ker_{\ZZ}(A)$.
It follows immediately that the Graver basis is always distance-reducing, but it is typically orders of magnitude larger than any given Markov basis. 
So, a natural goal is to identify structured families of toric ideals for which distance reduction can be certified by checking an easily accessible set of elements.
For instance, in this paper, we will show that the \emph{circuits} of $A$, which are the elements of $I_A$ of inclusion-wise minimal support, are sufficient for testing distance reduction in the case of complete intersection toric ideals of graphs.

\medskip
In this paper, we study the distance-reduction property from two perspectives. Firstly, we consider \emph{toric ideals of graphs} and leverage their combinatorial characterisation to study the distance-reduction property of their Markov bases. Secondly, we consider how the \emph{bouquet} structure of a homogeneous toric ideal affects the distance-reduction property of its Markov bases. Homogeneity is a unifying feature of both perspectives, as toric ideals of graphs are homogeneous.

\medskip \noindent
\textbf{Toric ideals of graphs.} Toric ideals of graphs, their generating sets as well as several binomial sets of interest have been studied in a series of papers; the circuits of toric ideals associated to graphs were characterised by Villarreal in \cite{Vil_1995},  the Graver basis of the toric ideals of a graph was first investigated by Ohsugi and Hibi in \cite{ohsugi-hibi-quad-binoms}, while the explicit structure of its elements was later established by Reyes, Tatakis and Thoma in \cite{Reyes2012minimal}. Moreover, in \cite{DeLoera_etal} De Loera, Sturmfels and Thomas determined  the universal Gr\"obner basis for toric ideals of graphs with at most eight vertices, while Tatakis and Thoma gave a graph theoretical characterisation of the elements of the universal Gröbner basis of the toric ideal of any graph in \cite{TATAKIS_Thoma_UGrobner}.   

The incidence matrix $A_G$ of a simple graph $G$ provides a natural class of $0/1$-matrices, each of whose columns contains exactly two ones. 
When it comes to distance-reduction, toric ideals of graphs are the natural next class to study after the trivial case with exactly one 1 in each column, and before the full generality of matrices with at most three ones per column, where any bouquet structure can appear, see \cite{petrovic2019hypergraph}. 
Note that, since every column has exactly two ones, the toric ideal of $A_G$ is homogeneous.
We note that, for a homogeneous ideal, by Proposition~\ref{prop:drredef}, the distance-reduction condition simplifies to the following: an element $z$ is distance-reduced by $\hat{z}$ 
if and only if $\hat{z}^+ \leq z^+$ and ${\rm Supp}(\hat{z}^-) \cap {\rm Supp}(z^-) \neq \emptyset$ up to the signs of $\hat z$ and $z$. We exploit this simplification throughout.

In the setting of toric ideals of graphs, the Graver basis is in bijection with the primitive even closed walks of $G$ by \cite{ohsugi-hibi-quad-binoms} and \cite{sturmfels1996grobner}, and the circuits correspond precisely to the subset of even cycles and pairs of odd cycles either intersecting in exactly one vertex or being vertex-disjoint and joined by a path,  see \cite[Proposition 4.2]{Vil_1995}. 
Our first main result is about complete intersection toric ideals of graphs for which we prove the following. 

\begin{theorem*}[Theorem~\ref{thm:CImain}]
    Let $G$ be a simple connected graph such that $I_G$ is a complete intersection,
    and let $M$ be a minimal Markov basis for $I_G$. Then $M$ is distance-reducing if
    and only if $M$ distance-reduces the circuits of $A_G$.
\end{theorem*}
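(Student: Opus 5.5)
The forward direction is immediate, since circuits are Graver elements and a distance-reducing Markov basis distance-reduces every element of ${\rm Gr}(A_G)$. For the converse, I would use the criterion from \cite{CK_2024}: $M$ is distance-reducing if and only if every Graver element $z$ is distance-reduced by some $\hat z \in M$. Because $I_G$ is homogeneous, Proposition~\ref{prop:drredef} turns this into a condition on walks. Up to sign, we need $\hat z^+ \le z^+$ and ${\rm Supp}(\hat z^-)\cap{\rm Supp}(z^-)\neq\emptyset$. In graph language: the odd-position edges of the closed even walk of $\hat z$ form a sub-multiset of the odd-position edges of the walk of $z$, and the two walks share at least one even-position edge. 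The task is therefore to handle the Graver elements that are not circuits, assuming the circuits are already handled.

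The first step is to use the structure of complete intersection toric ideals of graphs. For these, the known characterisation (e.g.\ via the Gitler--Reyes--Villarreal / Tatakis--Thoma description of CI graphs) says that $G$ decomposes into even cycles and odd-cycle pieces glued along edges or vertices in a tree-like (``ring graph''-type) fashion. Consequently every primitive even closed walk is either a circuit or is built from circuits by limited combinations, for instance two odd cycles joined by a path traversed twice. I would list the non-circuit Graver elements $z$ explicitly. In the CI case I expect each such $z$ to have at least one edge walked twice, so that $z^{\pm}$ carries a coefficient $2$ on the path (bridge) edges. Alternatively, $z$ may be a cycle with chords whose walk splits into circuits $c_1,c_2$ with $z^+ = c_1^+ + c_2^+$ (after a suitable choice of sign) and $z^- \supseteq$ a common part.

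Second, for each such non-circuit $z$ I would find a circuit $c$ with $c^+\le z^+$ and ${\rm Supp}(c^-)\subseteq{\rm Supp}(z^-)$, after choosing signs. By hypothesis some $\hat z\in M$ distance-reduces $c$, so $\hat z^+\le c^+\le z^+$ and $\hat z^-$ meets ${\rm Supp}(c^-)\subseteq{\rm Supp}(z^-)$. This is exactly the transitivity needed, and $\hat z$ then distance-reduces $z$. It therefore suffices to show that every Graver element of a CI graph \emph{dominates} a circuit in this sense. For even closed walks this is natural: a primitive walk that is not a cycle revisits a vertex or reuses an edge, and one can shortcut it to obtain a subwalk that is an even cycle or an odd-cycle pair. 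One has to check that the alternation of signs is preserved on the subwalk, so that the positive edges of the subwalk remain positive edges of $z$.

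The main obstacle is the sign-parity issue in this shortcut. When a walk revisits a vertex, cutting produces two closed subwalks. If both are odd, the cut does not give an even closed walk with compatible alternation, and the naive choice of $c$ may have $c^+\not\le z^+$. This is where the CI hypothesis must enter. I would first try to show that in a CI graph any Graver element with such an odd--odd split is exactly two odd cycles joined by a path, which is itself a circuit by \cite[Proposition 4.2]{Vil_1995} when the path has length zero. When the path is longer, that description is precisely the circuit case already, and it remains to treat the cases where the Graver element involves an even cycle sharing edges with others. In those cases an even cycle subwalk with correct alternation always exists. If this case analysis fails for some configuration, the fallback is to use the CI structure to show that the offending configuration cannot occur in a CI graph, because it would produce more than ${\rm codim}$ many minimal generators.
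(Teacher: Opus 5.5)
Your proposal has a genuine gap in the transitivity step, and your reduction to ``every non-circuit Graver element dominates a circuit'' cannot close the proof on its own.

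\textbf{The orientation problem.} Distance reduction is defined only up to signs. So from ``$\hat z\in M$ distance-reduces $c$'' you cannot conclude $\hat z^+\le c^+$. The reducer may act on the other side: $\hat z^+\le c^-$ or $\hat z^-\le c^-$, with the corresponding support overlap on $c^+$. In that case you need $\hat z^{\pm}\le z^-$. But you only know $\operatorname{Supp}(c^-)\subseteq\operatorname{Supp}(z^-)$, and this fails as soon as $\hat z$ has an entry $\pm 2$ on an edge where $z$ has $\pm 1$.

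A better choice of $c$ does not fix this. Requiring $c^+\le z^+$ and $c^-\le z^-$ together would make $z=c+(z-c)$ a proper conformal decomposition, which is impossible for a Graver element $z\neq c$. So some information about the elements of $M$ themselves is unavoidable.

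\textbf{The one-sided domination is also not established.} The natural circuits are the two end odd cycles of $\mathbf w$ joined by a doubled path. These carry coefficient $\pm 2$ on path edges lying in cycle blocks of $\mathbf w$, where $z$ has $\pm 1$. In general only $\operatorname{Supp}(c^{\pm})\subseteq\operatorname{Supp}(z^{\pm})$ holds. For example, compare $z_3$ with $z_4$ in Example~\ref{ex: CI graph}: neither $z_3^+\le z_4^+$ nor $z_3^-\le z_4^-$.

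\textbf{What the paper uses instead.} The complete-intersection hypothesis enters through Theorem~\ref{thm:tatakisCI}: every minimal generator except at most one is an even cycle. Even cycles have entries in $\{-1,0,1\}$. For them, two-sided support containment is enough to transfer reduction from the circuits $z_i$ to $z$ in any orientation; this is Lemma~\ref{lem:cycleDR}.

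The real work is the remaining case, which your plan does not address. Here the single non-cycle generator $\hat z$ is the only element of $M$ reducing all the $z_i$. One must then:
\begin{itemize}
\item show that $\mathbf w$ has exactly two end blocks;
\item find an edge $e$ on $p_1$ with $\hat z_e=2$ but $z_e=1$;
\item find an edge $f$ on $p_2$ playing the same role on the negative side;
\item reroute $p_1$ around $e$ through the corresponding segment of $p_2$, giving a circuit $z'$ with $e,f\notin E(w')$.
\end{itemize}
Some element of $M$ must reduce $z'$. A cycle would then reduce $z$, and $\hat z$ cannot, because $e,f\notin E(w')$. Either way this is a contradiction.

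The ``main obstacle'' you identified, the parity of split subwalks, is not the real difficulty. In a primitive walk the end blocks are odd cycles hanging at sinks, so the doubled-path circuits automatically have compatible signs. The difficulty is the multiplicity mismatch. Your appeal to a ring-graph description and a ``fallback'' counting argument does not replace the case analysis above.
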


The statement of Theorem \ref{thm:CImain} is motivated by \cite[Corollary 5.2]{CK_2024} where it is shown that, for $A \in \ZZ^{1 \times n}$, which gives rise to a monomial curve, such that $I_A$ is a complete intersection toric ideal, a minimal Markov basis is distance-reducing if and only if it distance-reduces the circuits of $A$. The proof of this result involves exploiting the specific form of minimal Markov bases of complete intersection monomial curves. Analogously, we prove Theorem \ref{thm:CImain} using a result by Tatakis and Thoma, restated in Theorem \ref{thm:tatakisCI}, about the structure of minimal Markov bases of complete intersection toric ideals coming from graphs.
However, we suspect that the complete intersection hypothesis can be dropped from this theorem, see Conjecture~\ref{conj: distance reduction for graph ideals by circuits}, which we have computationally verified for all graphs on at most $7$ vertices and all graphs with $8$ vertices and at most $16$ edges.

We note that if we try to extend Conjecture~\ref{conj: distance reduction for graph ideals by circuits} to all toric ideals arising from $0/1$-matrices, then the statement is immediately false. In fact, the following example shows that there exists a matrix $A$ that contains two $1$'s in each column except one, which contains three $1$'s, and a Markov basis $M$ for $A$ such that $M$ distance-reduces the circuits of $A$ but not the Graver basis.

\begin{example}
Consider the matrix 
\[
A = 
    \begin{bmatrix}
      1&1&0&0&0&0&0&0&0&0&0&0\\
      1&0&1&1&0&0&0&0&0&0&0&0\\
      1&0&0&0&1&0&0&0&0&0&0&0\\
      0&0&0&0&0&1&1&1&1&0&0&0\\
      0&0&1&0&0&1&0&0&0&1&1&0\\
      0&1&0&0&1&0&1&0&0&0&0&1\\
      0&0&0&0&0&0&0&1&0&1&0&1\\
      0&0&0&1&0&0&0&0&1&0&1&0 
    \end{bmatrix},
\]
which is \textit{almost} the incidence matrix of a graph except that the first column contains three $1$s.
Using the \texttt{AllMarkovBases} package \cite{CM_2025} for \textit{Macaulay2} \cite{M2}, we observe that the toric ideal $I_A$ has $27$ distinct  Markov bases. The Graver basis ${\rm Gr}(A)$ consists of $87$ elements and the circuits are a subset of $38$ elements. Let us fix the Markov basis
\[
M = 
\begin{bmatrix}
      0&0&0&0&0&0&0&1&-1&-1&1&0\\
      0&0&0&0&0&1&-1&0&0&-1&0&1\\
      0&0&1&-1&0&-1&0&0&1&0&0&0\\
      2&-2&-2&0&-2&-1&4&0&-3&0&3&0\\
      2&-2&-2&0&-2&0&3&-3&0&2&0&1\\
      2&-2&-2&0&-2&0&3&-1&-2&0&2&1\\
      2&-2&-1&-1&-2&0&2&-2&0&0&1&2\\
      2&-2&0&-2&-2&0&1&-3&2&0&0&3\\
      2&-2&0&-2&-2&0&1&-2&1&-1&1&3\\
      2&-2&0&-2&-2&0&1&-1&0&-2&2&3\\
      2&-2&1&-3&-2&0&0&0&0&-4&3&4
\end{bmatrix}.
\]
The Markov basis $M$ reduces the distance of all circuits of $A$ but it fails to reduce the distance of the Graver basis element
$
z = \begin{pmatrix}2& -2& -2& 0& -2& 0& 3& -2& -1& 1& 1& 1 \end{pmatrix}
$.
To see this, note that the only elements of $M$ that are applicable to $z$ are
\[
u = \begin{pmatrix}0&0&0&0&0&1&-1&0&0&-1&0&1\end{pmatrix}\,,
\
v = \begin{pmatrix}0&0&1&-1&0&-1&0&0&1&0&0&0\end{pmatrix}.
\]
Observe that $u^- \le z^+$ and $v^+ \le z^-$, but $||z+u|| = ||z||$ and $||z+v|| = ||z||$. Thus $M$ does not distance-reduce $z$, hence $M$ is not distance-reducing. Thus Conjecture~\ref{conj: distance reduction for graph ideals by circuits} is false if we replace the incidence matrix $A$ with an arbitrary $0/1$-matrix.
\end{example}

\medskip \noindent 
\textbf{Bouquet decompositions.} The bouquet structure of a toric ideal, introduced by Petrovi\'c, Thoma, and Vladoiu in \cite{petrovic2018bouquet}, partitions the columns of $A$ into subsets called \emph{bouquets}.
Each of these bouquets is said to be free, non-mixed, or mixed according to its \emph{signature}. 
This decomposition captures a great deal of the essential combinatorial structure of $I_A$.
For instance, it is known that two matrices that share the same bouquet ideal and signature have isomorphic sets of circuits, Graver bases, indispensable elements and even minimal Markov bases, with an explicit bijection between them (see \cite{petrovic2018bouquet, kosta2023strongly}).
We investigate the natural question of whether the distance-reduction property is also an invariant of the bouquet ideal and signature.

We show that the answer is no in general. See Example~\ref{ex: same bouquet diff DR}, which exhibits two matrices with the same bouquet ideal and signature, each of which has two minimal Markov bases. But, for one of these matrices both minimal Markov bases are distance-reducing while for the other matrix only one is distance-reducing. This difference arises from the observation that the bijection does not preserve the $1$-norms of elements in fibers. 
However, we show that imposing homogeneity on both toric ideals completely removes this obstruction.

\begin{theorem*}[Theorem~\ref{thm:globaldr}]
    Let $I_{A_1},I_{A_2}$ be homogeneous $T_{\omega}$-robust ideals for some integer matrix $T$ and signature $\omega$. Then, $I_{A_1}$ and $I_{A_2}$ have the same distance-reducing properties: for $\hat z,z \in \ker_\ZZ(A_1)$, the element $z$ is distance-reduced by $\hat z$ if and only if $\phi(z)$ is distance-reduced by $\phi(\hat z)$, where $\phi$ is a certain bijection between the kernels.
\end{theorem*}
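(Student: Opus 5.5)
The plan is to reduce everything to the sign-pattern characterisation of distance reduction for homogeneous ideals (Proposition~\ref{prop:drredef}). We then show that the bijection $\phi$ between $\ker_\ZZ(A_1)$ and $\ker_\ZZ(A_2)$ preserves exactly the data appearing in that characterisation.

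Recall the bouquet construction of Petrovi\'c, Thoma and Vladoiu. The columns of $A_i$ are partitioned into bouquets $B_1,\dots,B_s$. To each bouquet is attached a primitive vector $c_{B_j}$, supported on $B_j$, and $\ker_\ZZ(A_i)$ is the image of $\ker_\ZZ(T)$ under the lifting map
\[
u=(u_1,\dots,u_s)\longmapsto \sum_j u_j c_{B_j}.
\]
The signature $\omega$ records, for each bouquet, whether it is free, non-mixed (all entries of $c_{B_j}$ of the same sign) or mixed. We will define $\phi$ as the composition of the inverse lift for $A_1$ with the lift for $A_2$, both passing through $\ker_\ZZ(T)$. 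The first step is to check that $\phi$ is a well-defined group isomorphism. This is the part that uses $T_\omega$-robustness / the common bouquet ideal and signature, and is essentially in \cite{petrovic2018bouquet, kosta2023strongly}.

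The second step is the local analysis. For each coordinate $k\in B_j$, the entry $z_k$ equals $u_j (c_{B_j})_k$. Hence the sign of $z_k$ is determined by the sign of $u_j$ together with the fixed sign of $(c_{B_j})_k$. Therefore, bouquet by bouquet:
\begin{itemize}
\item $\operatorname{Supp}(z^+)\cap B_j$ and $\operatorname{Supp}(z^-)\cap B_j$ are determined by the sign of $u_j$ and by whether $B_j$ is mixed or non-mixed;
\item free bouquets contribute nothing.
\end{itemize}
Moreover, $\hat z^+\le z^+$ restricted to $B_j$ holds if and only if, on the positive part, $\hat u_j (c_{B_j})_k \le u_j (c_{B_j})_k$ whenever the relevant signs agree. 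This reduces to a comparison between $\hat u_j$ and $u_j$ whose form depends only on the type of $B_j$ and on the signs, not on the actual vector $c_{B_j}$.

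The key point is that mixed versus non-mixed is encoded by $\omega$. For a mixed bouquet, positive and negative entries both occur, so $u_j\neq 0$ forces $B_j$ to meet both $\operatorname{Supp}(z^+)$ and $\operatorname{Supp}(z^-)$. For a non-mixed bouquet, $B_j$ meets only one of them. In both cases the condition becomes one expressed purely in terms of $u,\hat u$ and $\omega$. This gives three translated conditions:
\begin{itemize}
\item $\hat z^+\le z^+$ translates into a condition $P(u,\hat u,\omega)$;
\item $\operatorname{Supp}(\hat z^-)\cap\operatorname{Supp}(z^-)\neq\emptyset$ translates into a condition $Q(u,\hat u,\omega)$;
\item the "up to signs" variants are handled by replacing $u,\hat u$ with $\pm u,\pm\hat u$.
\end{itemize}
Since $P$ and $Q$ are identical for $A_1$ and $A_2$, the equivalence follows.

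The main obstacle is the hypothesis of Proposition~\ref{prop:drredef}: it requires homogeneity, and the norm comparison $\|z+\hat z\|_1<\|z\|_1$ is genuinely not preserved by $\phi$ (Example~\ref{ex: same bouquet diff DR}). I must therefore make sure the combinatorial reformulation is applied separately on each side, using homogeneity of $I_{A_1}$ and of $I_{A_2}$ independently. In particular I must not try to compare norms directly. A subtle case is the coordinatewise inequality on a bouquet where $|(c_{B_j})_k|>1$: multiplying by a nonzero constant of fixed sign preserves inequalities between $\hat u_j c_k$ and $u_j c_k$. So this case should be fine, but I would verify it carefully for mixed bouquets, where the inequality flips on negative coordinates and must be matched with the $z^-$ part.
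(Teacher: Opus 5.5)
Your proposal is correct and follows essentially the paper's route. You apply Proposition~\ref{prop:drredef} separately on each homogeneous side, then transfer the two conditions bouquet by bouquet via $z_k=u_j(\mathbf{c}_j)_k$, using that each $\mathbf{c}_j$ has a positive entry (by the sign convention) and a negative one exactly when $B_j$ is mixed. The paper packages your predicates $P$ and $Q$ as Lemmas~\ref{lem:globaldr1} and~\ref{lem:globaldr2}, proved more generally for $\omega'\subseteq\omega$ and then specialised to $\omega'=\omega$.
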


For this result, we think of the distance-reduction property in two parts: 
there is a divisibility condition on positive parts and an overlapping condition on supports of negative parts.
The proof of this theorem passes through two lemmas on the \emph{poset of signatures} in which we track how
these two conditions change as we move through this poset. 
In particular, we show that the divisibility condition is preserved moving up the poset, 
while the support-overlap condition is preserved by moving down. 
Therefore, both these conditions are preserved when we consider two toric ideals corresponding to the same element of the poset. 
It is worth noting that even in the non-homogeneous case, the divisibility condition is still preserved moving up the poset. However, homogeneity allows us to work with this support-overlap condition instead of the more complicated 1-norm bound condition, which is not preserved moving down the poset in the non-homogeneous case.

We also give an intrinsic characterisation of homogeneity in terms of the bouquet data. 
We show that an ideal $I_A$ is homogeneous if and only if the vector $\mathbf{\bar{c}} = (\mathbf{\bar{c}}_1, \dots, \mathbf{\bar{c}}_s)$ of column-sums of the bouquet-index-encoding vectors lies in the row span of the bouquet matrix $A_B$. See Theorem~\ref{thm:homrow}. This gives an efficient way to construct examples of homogeneous $T_{\omega}$-robust ideals and demonstrates that homogeneity depends only on the data of the bouquet matrix and the bouquet-index-encoding vectors.

\medskip \noindent
\textbf{Classification over $A_3$.} Finally, we turn to a complete classification of distance reduction for toric ideals whose bouquet matrix is a monomial curve over $\mathbb{A}^3$ given by a matrix $T = \begin{pmatrix} n_1 & n_2 & n_3 \end{pmatrix}$. 
In this setting, the ideal $I_T$ and hence the original ideal, have exactly three circuits, whose structure is completely explicit and we prove the following.
 
\begin{theorem*}[Theorem~\ref{thm:A3class}]
	Let $T = \begin{pmatrix} n_1 & n_2 & n_3 \end{pmatrix} \in \ZZ_{>0}^3$, $\omega \subseteq [3]$, and let $A$ be a homogeneous $T_\omega$-robust matrix with minimal Markov basis $M$. Then $M$ fails to be distance-reducing if and only if all three of the following conditions hold: $\omega = [3]$, the monomial curve $T$ is a complete intersection, and $M$ contains two elements that are both circuits. In this case, $M$ distance-reduces every element of ${\rm Gr}(A)$ except the unique circuit of $A$ not contained in $M$.
\end{theorem*}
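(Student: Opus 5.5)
The plan is to reduce everything to the bouquet matrix $T$ via Theorem~\ref{thm:globaldr}. Since the distance-reducing properties of a homogeneous $T_\omega$-robust ideal depend only on $(T,\omega)$, I would fix for each $\omega\subseteq[3]$ one convenient homogeneous $T_\omega$-robust representative $A$ and verify the claim there. Such a representative can be built using the row-span criterion of Theorem~\ref{thm:homrow}, for instance by choosing bouquet-index-encoding vectors whose column sums are a multiple of $(n_1,n_2,n_3)$. Through the bijection $\phi$, the Graver basis, circuits and minimal Markov bases of $A$ correspond to those of $I_T$. By Proposition~\ref{prop:drredef}, distance reduction of $z$ by $\hat z$ means that $\hat z^+\le z^+$ and that the negative supports overlap, up to sign. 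The task therefore becomes tracking how the signature $\omega$ affects these two conditions on lifted elements. In a mixed bouquet a coordinate of $T$ lifts to entries of both signs; in a non-mixed bouquet it lifts to entries of a single sign.

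Step one uses the structure of $I_T$ for a monomial curve in $\mathbb A^3$. By Herzog's classification, $I_T$ is either a complete intersection, with two minimal generators, or is generated by three binomials $x_i^{c_i}-x_j^{r_{ij}}x_k^{r_{ik}}$ with full support. The three circuits are $(n_2,-n_1,0)/g$ and its two analogues. I would first treat the non-CI case. Every Graver element $z$ of $T$ has a sign pattern with one coordinate of one sign and two of the other. The minimal Markov basis is unique there, and I would show that some generator $\hat z$ has $\hat z^+$ dividing $z^+$ and shares a negative coordinate with $z$. The reason is that the three generators cover all three "single-sign" positions and have minimal exponents $c_i$. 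Lifting to $A$ preserves divisibility; Lemma on moving up the poset. The support overlap needs care. When a bouquet is mixed, the lifted element has both signs inside that bouquet, which only helps overlap, so the support condition also survives.

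Step two treats the CI case. There $I_T=\langle x_i^{a}-x_j^{b},\ x_k^{c}-x_i^{d}x_j^{e}\rangle$, up to relabelling, and Markov bases may choose the second generator among several fibre elements. One of the choices makes both generators circuits. If $M$ contains a non-circuit, i.e.\ a full-support element, I would argue that it reduces everything as in step one. If both elements of $M$ are circuits, consider the third circuit $x_k^{c'}-x_j^{e'}$. When $\omega\ne[3]$, some bouquet is non-mixed, and its lifted coordinates are forced onto one sign side. This creates support overlap between the lifted third circuit and a lifted generator, so the generator's positive part divides. When $\omega=[3]$ the lifts behave exactly like $T$ itself. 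The two circuits in $M$ then fail: one has incompatible positive part, and the other meets the missing circuit only in coordinates of the same sign and so has disjoint negative support. This gives the failure.

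Finally, I would show that in the failing case every other Graver element is reduced. Graver elements of $T$ other than the missing circuit have full support or coincide with an element of $M$, and a direct divisibility check handles them. The main obstacle is the careful case analysis in step two: tracking which sign each lifted coordinate takes for each $\omega$, and verifying overlap uniformly over the parameters $a,b,c,d,e$. I would first try to organise this by the sign pattern of each Graver element of $T$ rather than by explicit exponents.
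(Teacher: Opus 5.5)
Your plan has a genuine gap in the case $\omega\neq[3]$. This is exactly the case where the theorem asserts that \emph{every} minimal Markov basis of $A$ is distance-reducing.

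Steps one and two rest on two claims: that $M$ is the lift of a minimal Markov basis of $I_T$ (Herzog's generators), and that divisibility $\hat y^+\le y^+$ survives the lift. Both hold only at the top of the poset.

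\textbf{The Markov basis bijection.} The bijection of minimal Markov bases in \cite[Theorem~2.2]{kosta2023strongly} is between ideals with the \emph{same} signature, and $I_T$ is $T_{[3]}$-robust. For $\omega\neq[3]$ the minimal Markov bases of $A$ need not come from those of $I_T$. Take the first half of Example~\ref{ex: A3}, with $T=\begin{pmatrix}1&2&3\end{pmatrix}$ and $\omega=\{2\}$. There the unique minimal Markov basis of $A$ has four elements, two of which ($m_2$ and $m_4$) are circuits, whereas minimal Markov bases of $I_T$ have two elements. So your split into ``$M$ contains a non-circuit'' versus ``both elements of $M$ are circuits'' does not describe $M$.

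\textbf{The direction of the lift.} Passing from $T$ to $A$ moves \emph{down} the poset, not up. There Lemma~\ref{lem:globaldr1} gives $D(\hat y)^+\le D(y)^+$ only under the extra condition $y_i\le\hat y_i$ for $i\in\operatorname{Supp}(\hat y^-)\setminus\omega$. In the same example, $\hat y=(-1,2,-1)$ and $y=(0,3,-2)$ satisfy $\hat y^+\le y^+$. Their lifts are $-m_3$ and $m_2$, and $(-m_3)^+\not\le m_2^+$ in the second coordinate. What is preserved moving down is the negative-support overlap (Lemma~\ref{lem:globaldr2}).

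\textbf{The signature.} In Step two you also have it reversed. $\omega\neq[3]$ means some bouquet is \emph{mixed}. It is a mixed bouquet, whose lifted coordinates take both signs, that creates overlap or destroys divisibility.

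The missing idea is a way to handle $\omega\neq[3]$ without knowing what the Markov bases of $A$ look like.

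\begin{itemize}
\item Start from one Graver element $z$ that $M$ fails to reduce. Use only the Markov property to find $z_1,z_2\in M$ applicable at the two ends, $z_1^+\le z^+$ and $z_2^-\le z^-$.
\item A non-circuit of $T$ has its positive or negative part on two coordinates. Together with applicability, this forces a negative-support overlap. So an applicable non-reducing move must be a circuit (Proposition~\ref{prop:A3circuitDR}).
\item Proposition~\ref{prop:allcircuits} then pins down $\operatorname{Supp}(y_1^+)=\operatorname{Supp}(y^+)=\{i\}$, $\operatorname{Supp}(y_1^-)=\operatorname{Supp}(y_2^+)=\{j\}$, $\operatorname{Supp}(y_2^-)=\operatorname{Supp}(y^-)=\{k\}$ and $y_j=0$. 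Hence $z$ is the third circuit.
\item A mixed bouquet $i$ or $k$ gives a negative-support overlap, a contradiction. A mixed bouquet $j$ contradicts $z_1^+\le z^+$, because $z$ vanishes on bouquet $j$ while $z_1$ is positive somewhere there. Hence $\omega=[3]$.
\item Only at that point are the Markov-basis bijection with $T$ and Herzog's classification available, giving that $T$ is a complete intersection.
\end{itemize}

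Your argument for the converse when $\omega=[3]$ is essentially right, with one correction. Both circuits in $M$ may be applicable to the missing circuit. What makes both fail is that, in the only applicable orientation, their negative supports are disjoint from that of the missing circuit.
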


\medskip \noindent
\textbf{Outline.} In Section~\ref{sec: background}, we recall the necessary background on toric ideals, distance reduction, and the homogeneity condition, including Theorem~\ref{thm:DAexact} that characterises distance-irreducible elements using the fiber graph. 
In Section~\ref{sec: graphs}, we study distance reduction for toric ideals of graphs, which concludes with a proof of the complete intersection result Theorem~\ref{thm:CImain}. 
In Section~\ref{sec:Bouquets}, we develop the theory of distance reduction in terms of the bouquet structure. We establish the invariance theorem under homogeneity, and provide the bouquet-theoretic
characterisation of homogeneity. At the end of this section, in Section~\ref{sec: distance reduction A3}, we study toric ideals whose bouquet matrix $T \in \mathbb{Z}_{>0}^3$ is a monomial curve.





\section{Background}\label{sec: background}

We write $\NN = \{0,1,2,\dots\}$ for the set of non-negative integers and $[n] := \{1,2, \dots, n\}$. Given an integer vector $z \in \ZZ^n$, we define $z^+, z^- \in \NN^n$ to be the unique non-negative integer vectors such that $z = z^+ - z^-$. Throughout, we use the $1$-norm on $\ZZ^n$, which is written as $\lVert \cdot \rVert$. For any vector $z \in \NN^n$, we denote its support by $\operatorname{Supp}(z) = \{i \in [n] : z_i \neq 0\}$. We use $\le$ to denote the component-wise partial order on $\NN^n$ that extends the natural order on $\NN$ explicitly given by
\[
\text{if }u,v \in \NN^n \text{ then } u \le v \iff u_i \le v_i \text{ for all } i \in [n].
\]
Equivalently, we have $u \le v$ if $v-u \in \NN^n$.
Given a polynomial ring $R = k[x_1, \dots, x_n]$ and a vector $u \in \NN^n$, we use multi-index notation for monomials $x^u := x_1^{u_1} \dots x_n^{u_n}$.

\subsection{Toric ideals}

Let $A \in \ZZ^{d \times n}$ be an integer matrix and $R = k[x_1, \dots, x_n]$ a polynomial ring over a field $k$. Assume that $\ker_\ZZ(A) \cap \NN^n = \{0\}$. For each $z \in \ker_\ZZ(A)$, we write $\beta(z) = x^{z^{+}} - x^{z^-}$ for the binomial associated to $z$. We define the toric ideal $I_A = \langle  \beta(z) : z \in \ker_\ZZ(A) \rangle$. 
We say that $I_A$ (or $A$) is \textit{homogeneous} if the vector $\mathbf{1}_n = (1,1, \dots, 1) \in \ZZ^n$ appears in the row-span of $A$.

The \textit{affine semigroup of $A$} is $\NN A := \{Au \in \ZZ^d \,|\, u \in \NN^n\}$, which is the semigroup generated by the columns of $A$. 
Given a monomial $x^u \in R$ for some $u \in \NN^n$, its \textit{$A$-degree} is defined to be the vector $\deg_A(x^u) = Au \in \NN A$. 
For each $b \in \NN A$, the $b$-fiber of $A$ is given by $\mathcal F_b := \{u \in \NN^n \,|\, Au = b\}$.

{\em Circuits} are irreducible binomials of a toric ideal $I_A$ with minimal support. In vector notation, a vector ${\bf u}\in \ker_{\ZZ}(A)$ is called a circuit of the matrix $A$ if $\operatorname{Supp}({\bf u})$ is minimal and the components of ${\bf u}$ are relatively prime. We write $C(A) \subseteq \ker_\ZZ(A)$ for the set of circuits of $A$.


A \textit{Markov basis} $M \subseteq \ker_\ZZ(A)$ for $A$ (or for $I_A$) is a collection of vectors such that the set $\{\beta(z) : z \in M\}$ is a generating set of $I_A$. It is \textit{minimal} if this generating set is minimal and we ignore the signs of the vectors. The intersection of all Markov bases for $A$ is the \textit{indispensable set}, denoted by $S(A)$, and the union of Markov bases is called the \textit{universal Markov basis}, and is denoted by $U(A)$. For ease of notation, a set of vectors in $\ker_{\ZZ}(A)$, such as a Markov basis, is denoted by a matrix whose rows are the set of vectors. 
We recall the following forms of vector decomposition for exponents in toric ideals.

\begin{definition}\label{def: conformal and semi-conformal decomposition}
    Fix $A$ as above and let $z, u, v \in \ker_\ZZ(A)$ be vectors such that $z = u+v$. We say that the decomposition $z = u+v$ is:
    \begin{itemize}
        \item \textit{proper} if neither $u$ nor $v$ is the zero vector,
        \item \textit{conformal} if $ z^+=u^++ v^+ \text{ and } z^-=u^-+v^-$,
        \item \textit{semi-conformal} if $u_i > 0 \implies v_i \geq 0$ and $v_i < 0 \implies u_i \leq 0$.
    \end{itemize}
    We define the \textit{Graver basis} ${\rm Gr}(A) \subseteq \ker_\ZZ(A)$ as the set of elements that do not admit a proper conformal decomposition. 
    A binomial $x^u-x^v \in I_A$ is called \textit{primitive} if there is no other binomial $x^{u'}-x^{v'} \in I_A$ such that $x^{u'}|x^u$ and $x^{v'}|x^v$.
\end{definition}
Let $\operatorname{Gr}(I_A) := \{\beta(z) \,|\, z \in \operatorname{Gr}(A)\}$. The set of
primitive binomials is finite and coincides with $\operatorname{Gr}(I_A)$. Additionally, the circuits of $A$ are contained within the Graver basis of $A$. We reference \cite[Chapter 4]{sturmfels1996grobner} for both of these results.

Semi-conformal decompositions allow us to give an equivalent characterisation of the indispensable set.

\begin{proposition}[{\cite[Proposition~1.1]{charalambous2014markov}}]\label{prop: no semi-conformal decomp iff indispensable}
    The set $S(A)$ of indispensable elements is equal to the subset of $\ker_\ZZ(A)$ that do not admit a proper semi-conformal decomposition.
\end{proposition}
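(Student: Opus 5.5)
We need to prove that $S(A)$ equals the set of elements of $\ker_\ZZ(A)$ with no proper semi-conformal decomposition. We will use the standard description of minimal Markov bases through fibers. For $b \in \NN A$, let $G_b$ be the graph on $\mathcal F_b$ in which $u$ and $v$ are adjacent if $x^u$ and $x^v$ share a variable, i.e.\ $\operatorname{Supp}(u)\cap\operatorname{Supp}(v)\neq\emptyset$. A minimal Markov basis is obtained by choosing, for every $b$, moves that connect the connected components of $G_b$ in a spanning-tree fashion; every minimal Markov basis arises this way (Charalambous--Katsabekis--Thoma, or Diaconis--Sturmfels). Consequently, $z \in \ker_\ZZ(A)$ is indispensable if and only if, with $b = Az^+$, the fiber $\mathcal F_b$ is exactly $\{z^+, z^-\}$ and these two points are non-adjacent, i.e.\ $\gcd(x^{z^+},x^{z^-})=1$ and $G_b$ has exactly two components, both singletons. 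Assuming $\ker_\ZZ(A)\cap \NN^n=\{0\}$, each fiber is finite, so this characterisation is available.

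\textbf{Semi-conformal decomposition $\Rightarrow$ not indispensable.} Suppose $z=u+v$ is proper and semi-conformal. Set $w := z^+ - u^+ = v^+ - u^-$, computed coordinatewise. The semi-conformal condition is exactly what makes $w \in \NN^n$.
\begin{itemize}
\item If $u_i>0$, then $v_i\ge 0$, so $z_i = u_i+v_i>0$ and $w_i = z_i - u_i = v_i \ge 0$.
\item If $u_i\le 0$ and $v_i\ge 0$, then $w_i = v_i + u_i^{-}\ge 0$, where we use $u_i^+=0$ and $z_i^+ - u_i^+ = v_i^+ - u_i^-$.
\item If $v_i<0$, then $u_i\le 0$, so $w_i = -u_i^- \ldots$; here a short check using $z_i=u_i+v_i<0$ gives $w_i=0$.
\end{itemize}
I will verify these signs carefully. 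Then $w = z^+ - u$ and $w + u = z^+$, so $w \in \mathcal F_b$. If $w \notin \{z^+, z^-\}$, the fiber has a third point, and $z$ is not indispensable. If $w = z^-$, then $u = z$ and $v=0$, contradicting properness. If $w = z^+$, then $u = 0$, again contradicting properness. Finally, if $\gcd(x^{z^+},x^{z^-})\neq 1$, then $z$ is already not indispensable, since the binomial is not part of any minimal generating set.

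\textbf{Not indispensable $\Rightarrow$ semi-conformal decomposition.} Suppose $z$ is not indispensable, and first assume $z^+$ and $z^-$ have disjoint supports. By the characterisation, the fiber $\mathcal F_b$ must contain some $w\neq z^\pm$, or else $z^+$ and $z^-$ would be adjacent, which is excluded by disjoint supports. The natural choice is to take $u = z^+ - w$ and $v = w - z^-$, so that $z=u+v$ with both summands nonzero. Checking semi-conformality: if $u_i>0$, then $z^+_i>w_i\ge0$, so $z^-_i=0$ and $v_i=w_i\ge0$. If $v_i<0$, then $z^-_i>0$, so $z^+_i=0$ and $u_i=-w_i\le0$. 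Both conditions hold. If the supports of $z^+$ and $z^-$ overlap, then $z$ is not a binomial of the form $x^{z^+}-x^{z^-}$ with coprime terms. Strictly speaking, $z^+$ and $z^-$ always have disjoint supports by definition, so this case does not arise.

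\textbf{Main obstacle.} The crux is the fiber characterisation of indispensability: that $z$ lies in every minimal Markov basis exactly when its fiber is the two-point non-adjacent set $\{z^+, z^-\}$. I would cite it from Charalambous--Katsabekis--Thoma. The key content is that any extra point $w$ in the fiber lets one replace $z$ by the two moves $z^+\to w\to z^-$, or by an alternative edge between components. This produces a minimal Markov basis avoiding $z$.
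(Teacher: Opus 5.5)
The paper does not prove this statement; it quotes it from \cite[Proposition~1.1]{charalambous2014markov}. Your route is the standard argument. You reduce indispensability of $z\neq 0$ to the fibre condition $\mathcal F_{Az^+}=\{z^+,z^-\}$, which is \cite[Corollary~2.10]{CKT_2007} combined with Proposition~\ref{prop:3graphs}; the non-adjacency and gcd clauses are automatic because $z^+$ and $z^-$ have disjoint supports. You then translate a third point of the fibre into a proper semi-conformal decomposition and back. Your second direction, with $u=z^+-w$ and $v=w-z^-$, is correct as written.

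The first direction, however, is wrong as written and must be redone. The identity $z^+-u^+=v^+-u^-$ is false. Take a coordinate with $u_i=-2$ and $v_i=-1$, which semi-conformality allows: the left side is $0$ and the right side is $-2$, so $v^+-u^-$ need not even lie in $\NN^n$. Likewise, your third case claims $w_i=0$ when $v_i<0$, but for $w=z^+-u$ one gets $w_i=-u_i$, which is $2$ in this example. What you actually use afterwards is $w=z^+-u=z^-+v$, and its nonnegativity takes one line. Semi-conformality is equivalent to $u^+\le z^+$: if $u_i>0$ then $v_i\ge 0$, so $z_i\ge u_i$. Hence $w=(z^+-u^+)+u^-\in\NN^n$. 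With this replacement the rest of that direction goes through, since $w=z^+$ iff $u=0$ and $w=z^-$ iff $v=0$.

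Finally, both the statement and your characterisation tacitly require $z\neq 0$. Under $\ker_\ZZ(A)\cap\NN^n=\{0\}$, a decomposition $0=u+(-u)$ is semi-conformal only if $u\le 0$, which forces $u=0$. So $0$ has no proper semi-conformal decomposition, yet $0\notin S(A)$. Correspondingly, your fibre criterion should require two \emph{distinct} singleton components, i.e.\ $z^+\neq z^-$.
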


\subsection{Distance reduction}

The theory of distance reduction, introduced in \cite{aoki2005distance}, is about studying $1$-norm distance between points in fibers of $A$. In this section we recall the basic definitions and prove some small results that will be used throughout the paper.

\vspace{1.5mm}

\begin{definition}\label{def:drbouquet}
    For $\hat z,z \in \ker_{\ZZ}(A)$, we say that \textit{$z$ is distance-reduced by $\hat z$} if
    \begin{enumerate}
        \item \label{item:drbouquet1} $\hat z^+ \le z^+$
        \item \label{item:drbouquet2} $\Vert z - \hat z \Vert < \Vert z \Vert$
    \end{enumerate}
    up to the signs of $\hat z$ and $z$.
    Explicitly, we require at least one of the following to hold:
    \begin{itemize}
        \item $\hat z^+ \le z^+$ and $\Vert z - \hat z \Vert < \Vert z \Vert$,
        \item $\hat z^+ \le z^-$ and $\Vert z + \hat z \Vert < \Vert z \Vert$,
        \item $\hat z^- \le z^+$ and $\Vert z + \hat z \Vert < \Vert z \Vert$,
        \item $\hat z^- \le z^-$ and $\Vert z - \hat z \Vert < \Vert z \Vert$.
    \end{itemize}
    We say that an element $z \in \ker_\ZZ(A)$ is \textit{distance-irreducible} if the only element of $\ker_\ZZ(A)$ that distance-reduces $z$ is itself. We write $D(A)$ for the set of distance-irreducible elements.
\end{definition}



The intuition behind Definition \ref{def:drbouquet} comes from considering the fiber that $z^+$ and $z^-$ lie in. Let $b:=A z^+=Az^-$ so $z^+$ and $z^-$ lie in the fiber $\mathcal{F}_b$. Then, condition (\ref{item:drbouquet1}) of Definition \ref{def:drbouquet} tells us that $\hat z$ is ``applicable" to $z$ i.e., applying the move $\hat z$ to $z^+$ keeps us in the fiber $\mathcal{F}_b$. We can see this by noting that $z^+-\hat z=(z^+-\hat z^+)+\hat z^- \in \NN^n$ since $\hat z^- \in \NN^n$ and, by condition (\ref{item:drbouquet1}) of Definition \ref{def:drbouquet} , $z^+-\hat z^+ \in \NN^n$. Furthermore, $A(z^+-\hat z)=A z^+=b$ since $\hat z \in \ker_\ZZ(A)$ so $\hat z^+-\hat z$ lies in the fiber $\mathcal{F}_b$. Then, rewriting condition (\ref{item:drbouquet2}) of Definition \ref{def:drbouquet}, $\lVert (z^+-\hat z)-z^-\rVert<\lVert z^+ - z^-\rVert$, this is telling us that after applying the move $\hat z$ to $z^+$, our new position in the fiber, which is $z^+-\hat z$, is closer to $z^-$ than $z^+$ was. Thus, $\hat z$ allows us to actually ``reduce the distance" between $z^+$ and $z^-$.

We will show in Proposition~\ref{prop:drredef} that the homogeneity condition on the toric ideal $I_A$ allows us to simplify part \ref{item:drbouquet2} of Definition \ref{def:drbouquet}. This all comes down to Lemma \ref{lem:hompm} which tells us that every element in the same fiber has the same $1$-norm. Thus, when we move from $z^+$ to $z^+-\hat z$ in our fiber, since the supports of $z^+$ and $z^-$ are completely disjoint, all we need to decrease the distance to $z^-$ is to have some overlap between the supports of $\hat z$ and $z^-$. However, by condition (\ref{item:drbouquet1}) of Definition \ref{def:drbouquet} , $\hat z^+ \le z^+$ so, in particular, $\operatorname{Supp}(\hat z) \subseteq \operatorname{Supp}(z^+)$. But again, the supports of $z^+$ and $z^-$ are completely disjoint so the only way for the support of $\hat z$ to overlap with $z^-$ is for the support of $\hat z^-$ to overlap with $z^-$. Proposition \ref{prop:drredef} shows that, alongside condition (\ref{item:drbouquet1}) of Definition \ref{def:drbouquet}, this is a necessary and sufficient condition for distance reduction in the homogeneous setting.

\begin{lemma} \label{lem:hompm}
    If $A$ is homogeneous and $z \in \ker_\ZZ(A)$, then we have $\lVert z^+ \rVert= \lVert z^- \rVert$.
\end{lemma}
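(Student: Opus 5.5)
Homogeneity means $\mathbf{1}_n$ lies in the row span of $A$, so I will begin by fixing a vector $w \in \QQ^d$ with $w^{T} A = \mathbf{1}_n$. Working over $\QQ$ rather than $\ZZ$ is harmless, since only the vanishing of $Az$ will be used. The plan is to show that $\mathbf{1}_n$ is orthogonal to every element of $\ker_\ZZ(A)$ and then to read off the equality of the two norms.

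Let $z \in \ker_\ZZ(A)$. Then
\[
\mathbf{1}_n \cdot z \;=\; (w^{T}A)z \;=\; w^{T}(Az) \;=\; 0 .
\]
Writing $z = z^+ - z^-$, this becomes $\mathbf{1}_n\cdot z^+ = \mathbf{1}_n \cdot z^-$. Both $z^+$ and $z^-$ lie in $\NN^n$, so $\mathbf{1}_n \cdot z^{\pm}$ is the sum of nonnegative entries, which is exactly the $1$-norm. Hence $\lVert z^+\rVert = \lVert z^-\rVert$.

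There is no real obstacle here. The only points that need care are that the linear combination of rows producing $\mathbf{1}_n$ may have rational coefficients, and that the $1$-norm agrees with the coordinate sum only because $z^{\pm}$ have nonnegative entries. The same computation shows more generally that every element $u$ of a fiber $\mathcal F_b$ has $\lVert u\rVert = w^{T}b$. This is the form of the statement that will be used later in Proposition~\ref{prop:drredef}.
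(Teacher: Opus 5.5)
Your proof is correct and is essentially the paper's argument: choose a rational $\theta$ with $\theta A = \mathbf{1}_n$, observe that $\mathbf{1}_n \cdot z = \theta(Az) = 0$, and identify $\mathbf{1}_n \cdot z^{\pm}$ with $\lVert z^{\pm} \rVert$ because $z^{\pm} \in \NN^n$. Your fiber-wise restatement $\lVert u \rVert = \theta b$ for $u \in \mathcal{F}_b$ is an equivalent reformulation, since $z^+$ and $z^-$ lie in the same fiber.
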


\begin{proof}
    Since $A$ is homogeneous, there exists $\theta \in \QQ^d$ such that $\theta A = \mathbf{1}_n$. Then, 
    \begin{align*}
        \lVert z^+ \rVert - \lVert z^- \rVert&= \left(\sum_{k=1}^n |z^+_k| \right)-\left(\sum_{k=1}^n |z^-_k| \right)=\left(\sum_{k=1}^n z^+_k \right)-\left(\sum_{k=1}^n z^-_k \right) \\
        &=\left(\sum_{\substack{k=1 \\ z_k>0}}^n z_k \right)-\left(\sum_{\substack{k=1 \\ z_k<0}}^n -z_k \right)=\sum_{k=1}^n z_k=\mathbf{1}_n \cdot z 
        =(\theta A) \cdot z = \theta (Az)=0.
    \end{align*}
\end{proof}

\begin{proposition} \label{prop:drredef}
    Let $I_A$ be a homogeneous toric ideal and let $\hat z,z \in \ker_\ZZ(A)$. Then, $z$ is distance-reduced by $\hat z$ if and only if $\hat z^+ \le z^+$ and $\operatorname{Supp}(\hat z^-) \cap \operatorname{Supp}(z^-)$ is non-empty up to the signs of $\hat z$ and $z$.
\end{proposition}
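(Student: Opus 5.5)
The plan is to prove the equivalence one sign case at a time. By symmetry, treat the case $\hat z^+ \le z^+$ and the claim $\Vert z - \hat z\Vert < \Vert z\Vert$ if and only if $\operatorname{Supp}(\hat z^-)\cap\operatorname{Supp}(z^-)\neq\emptyset$. The other three cases of Definition~\ref{def:drbouquet} follow by replacing $\hat z$ with $-\hat z$ and/or $z$ with $-z$, since negating a vector swaps its positive and negative parts and preserves the $1$-norm. Hence it suffices to fix the sign pattern and assume $\hat z^+\le z^+$ throughout.

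Under this assumption I will compute $\Vert z-\hat z\Vert$ explicitly. Write $z-\hat z = (z^+-\hat z^+) + \hat z^- - z^-$. The vector $w := z^+ - \hat z^+ + \hat z^-$ lies in $\NN^n$ and is in the same fiber as $z^-$. Hence $z-\hat z = w - z^-$, and $\Vert z-\hat z\Vert = \sum_i |w_i - z^-_i|$.

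Split the coordinates into $S=\operatorname{Supp}(z^-)$ and its complement. Off $S$, the term is $w_i$. On $S$, we have $z^+_i=0$, so $\hat z^+_i=0$ and $w_i=\hat z^-_i$; the term is $|\hat z^-_i - z^-_i|$. Since $\hat z^+_i$ and $\hat z^-_i$ are never both nonzero, $\Vert w\Vert = \Vert z^+\Vert - \Vert\hat z^+\Vert + \Vert \hat z^-\Vert$. Homogeneity via Lemma~\ref{lem:hompm}, applied to $\hat z$, gives $\Vert \hat z^+\Vert=\Vert\hat z^-\Vert$, so $\Vert w\Vert=\Vert z^+\Vert$, and the same lemma applied to $z$ gives $\Vert z^+\Vert=\Vert z^-\Vert$. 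Therefore
\[
\Vert z-\hat z\Vert = \Vert w\Vert - \sum_{i\in S}\hat z^-_i + \sum_{i\in S}|\hat z^-_i - z^-_i|,
\]
while $\Vert z\Vert = \Vert z^+\Vert+\Vert z^-\Vert = \Vert w\Vert + \sum_{i\in S} z^-_i$.

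The comparison then reduces to the inequality
\[
\sum_{i\in S}|\hat z^-_i-z^-_i| - \hat z^-_i < \sum_{i\in S} z^-_i .
\]
It holds if and only if some $i\in S$ has $|a-b|-a<b$, where $a=\hat z^-_i$ and $b=z^-_i>0$. Termwise we always have $|a-b| \le a+b$, with strict inequality precisely when both $a$ and $b$ are positive. So the inequality holds exactly when $\hat z^-_i>0$ for some $i\in S$, which is the support-intersection condition.

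The main care point is this bookkeeping on and off $S$, in particular using $\hat z^+\le z^+$ to force $\hat z^+_i = 0$ on $S$. The rest is routine.
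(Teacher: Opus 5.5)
Your proposal is correct and follows essentially the same route as the paper: reduce to the sign pattern $\hat z^+\le z^+$, observe that $\hat z^+$ vanishes on $\operatorname{Supp}(z^-)$, and use Lemma~\ref{lem:hompm} to cancel $\Vert\hat z^+\Vert$ against $\Vert\hat z^-\Vert$. The only difference is organisational: you compute $\Vert z-\hat z\Vert$ exactly, which amounts to the identity $\Vert z\Vert-\Vert z-\hat z\Vert=2\sum_i\min(\hat z^-_i,z^-_i)$ and gives both directions at once, whereas the paper proves $(\Rightarrow)$ by showing the norms are equal when the supports are disjoint, and $(\Leftarrow)$ via the triangle inequality with one strict coordinate.
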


\begin{proof}
    For the ($\Rightarrow$) direction, since $z$ is distance-reduced by $\hat z$, by condition (\ref{item:drbouquet1}) of Definition~\ref{def:drbouquet}, we have $\hat z^+ \le z^+$ up to switching the signs of $\hat z$ and $z$. We now suppose $\operatorname{Supp}(\hat z^{-}) \cap \operatorname{Supp}(z^{-})$ is empty and aim to find a contradiction by showing that condition (\ref{item:drbouquet2}) of Definition \ref{def:drbouquet} is not fulfilled. We first note that $\operatorname{Supp}(z^{-}) \cap \operatorname{Supp}(z^{+})$ is empty and, since $\hat z^{+} \le z^{+}$, then $\operatorname{Supp}(z^{-}) \cap \operatorname{Supp}(\hat z^{+})$ is also empty. Thus, the support of $z^{-}$ is distinct from the supports of $z^{+}$, $\hat z^{+}$ and $\hat z^{-}$ so,
\begin{equation}\label{eq:redef1}
 \Vert z - \hat z \Vert =\Vert  z^{+} - z^{-} -  \hat z^{+} + \hat z^{-} \rVert
        =\lVert  z^{+} -  \hat z^{+} + \hat z^{-}\rVert + \lVert - z^{-}\rVert.
\end{equation}
Note that $\hat z^{-} \in \NN^n$ and, since $\hat z^{+} \le z^{+}$ then $z^{+}-\hat z^{+} \in \NN^n$ meaning 
\begin{align}\label{eq:redef2}
 \lVert  z^{+} -  \hat z^{+} + \hat z^{-}\rVert&=\sum_{k=1}^n |(z^{+} -  \hat z^{+})_k + \hat z_k^{-}|=\sum_{k=1}^n |(z^{+} -  \hat z^{+})_k | + \sum_{k=1}^n|\hat z_k^{-}|\\ \nonumber
 &=\lVert  z^{+} -  \hat z^{+} \rVert + \lVert \hat z^{-}\rVert.
\end{align}
    Now, $\hat z^{+} \le z^{+}$ also implies that $\lVert z^{+} - \hat z^{+} \rVert = \lVert z^{+} \rVert - \lVert \hat z^{+} \rVert$ since $z^+,\hat z^+ \in \NN^n$ so, combining this with Equations (\ref{eq:redef1}) and (\ref{eq:redef2}), we have
    \begin{align*}
        \Vert z - \hat z \Vert=\lVert  z^{+} \rVert -  \lVert \hat z^{+} \rVert +  \lVert\hat z^{-}\rVert + \lVert - z^{-}\rVert.
    \end{align*}
    Since $I_A$ is homogeneous, by Lemma~\ref{lem:hompm}, we have $\lVert \hat z^+ \rVert = \lVert \hat z^- \rVert$ and we are left with
    \begin{equation} \label{eq:redef3}
        \Vert z - \hat z \Vert=\lVert  z^{+} \rVert+\lVert - z^{-}\rVert=\lVert  z^{+} - z^{-}\rVert=\lVert z \rVert
    \end{equation}
    since $\operatorname{Supp}(z^{-}) \cap \operatorname{Supp}(z^{+})$ is empty. By Equation~(\ref{eq:redef3}), we see that condition (\ref{item:drbouquet2}) of Definition~\ref{def:drbouquet} is not fulfilled which was the contradiction we were looking for.

    \smallskip
    For the ($\Leftarrow$) direction, assume $\hat z^+ \le z^+$ and $\operatorname{Supp}(\hat z^{-}) \cap \operatorname{Supp}(z^{-})$ is non-empty meaning we have $i \in \operatorname{Supp}(\hat z^{-}) \cap \operatorname{Supp}(z^{-})$ for some $i \in [n]$. This means $z_i^->0$ and $\hat z^-_i>0$ so $|\hat z_i^- - z_i^- |<|\hat z_i^-| + |z_i^-|$.
    \begin{align} \label{eq:redef3.5}
        \lVert \hat z^{-} - z^{-} \rVert&=\sum_{k=1}^n |\hat z_k^{-}-z_k^{-}|=|\hat z^{-}_i- z^{-}_i|+\sum_{\substack{k =1 \\ k \neq i}}^n |\hat z^{-}_k-z^{-}_k| \\ \nonumber
        &<|\hat z^{-}_i|+|-z^{-}_i|+\sum_{\substack{k =1 \\ k \neq i}}^n |\hat z^{-}_k-z^{-}_k| \\ \nonumber
        &\le |\hat z^{-}_i|+|-z^{-}_i|+\sum_{\substack{k =1 \\ k \neq i}}^n \left(|\hat z^{-}_k|+|-z^{-}_k|\right) \\ \nonumber
        &=\sum_{k =1}^n \left(|\hat z^{-}_k|+|-z^{-}_k|\right)=\sum_{k =1}^n |\hat z^{-}_k|+\sum_{k =1}^n|-z^{-}_k|\\ \nonumber
        &=\lVert \hat z^{-} \rVert + \lVert -z^{-} \rVert.
    \end{align}
    Finally, since $\lVert \cdot \rVert$ is a norm, by the triangle inequality, we have
    \begin{equation}\label{eq:redef4}
         \lVert z - \hat z \rVert=\Vert  z^{+}- z^{-} -  \hat z^{+} + \hat z^{-} \rVert \le \lVert  z^{+} -  \hat z^{+} \rVert + \lVert \hat z^{-} - z^{-} \rVert.
    \end{equation}
   As before, $\hat z^{+} \le z^{+}$ implies that $\lVert z^{+} - \hat z^{+} \rVert = \lVert z^{+} \rVert - \lVert \hat z^{+} \rVert$ and, combining this and Equation~(\ref{eq:redef3.5}) with Equation~(\ref{eq:redef4}), we have
   \begin{equation} \label{eq:redef5}
       \lVert z - \hat z \rVert<  \lVert  z^{+} \rVert - \lVert  \hat z^{+} \rVert + \lVert \hat z^{-} \rVert +\lVert - z^{-} \rVert
   \end{equation}
   By Lemma~\ref{lem:hompm} again, we have $\lVert \hat z^+ \rVert = \lVert \hat z^- \rVert$ and so we are left with
     \begin{equation}  \label{eq:redef6}
       \lVert z - \hat z \rVert<  \lVert  z^{+} \rVert + \lVert -z^{-} \rVert=\lVert  z^{+} - z^{-} \rVert=\lVert z \rVert
   \end{equation}
    since $\operatorname{Supp}(z^{-}) \cap \operatorname{Supp}(z^{+})$ is empty. By assumption $\hat z^+ \le z^+$, so condition (\ref{item:drbouquet1}) of Definition~\ref{def:drbouquet} holds and, by Equation~(\ref{eq:redef6}), part \ref{item:drbouquet2} also holds. Thus, $z$ is distance-reduced by $\hat z$.
\end{proof}

\subsection{Homogeneity and distance reduction}


Proposition \ref{prop:drredef} demonstrated how the homogeneity condition somewhat simplified condition (\ref{item:drbouquet2}) of Definition \ref{def:drbouquet}. We now present a small example of how this simplification manifests when dealing with distance-irreducible elements. Recall from Definition \ref{def:drbouquet} that distance-irreducible elements are those elements which are only distance-reduced by themselves.

For non-homogeneous toric ideals, the distance-irreducible elements are not always a subset of the universal Markov basis. For instance, in \cite{CK_2024}, it is shown that if $A=\begin{bmatrix} 8&31&33&53\end{bmatrix}$, then  $D(A) \not\subseteq U(A)$. However, as we will now see, if we restrict ourselves to homogeneous toric ideals, we always have $D(A) \subseteq U(A)$. This result will follow from Theorem~\ref{thm:DAexact}, but to prove it we require the following important definitions for graphs associated to fibers. 

\begin{definition}[{\cite[Definition~2.1]{CKT_2007}}] \label{def:fiberGraph}
    For each $b \in \NN A$, define the toric ideal
$$I_{A,b} := \langle x^u-x^v \,|\,  \deg_A(x^u)=\deg_A(x^v), \  b - \deg_A(x^v) \in \NN A \setminus \{0\} \rangle.$$
We construct the graph $G_b$ with vertex set $\mathcal F_b$ and edge set 
$$E(G_b) = \{\{u,v\} \,|\, x^u-x^v \in I_{A,b}\}.$$
\end{definition}

\begin{definition}[{\cite[Section~2.1]{CM_2025}}] \label{def:3difgra}
    We also define the graph $G'_b$ on the vertex set $\mathcal{F}_b$ whose edges are  $$E(G'_b)=\{\{u,v\} \,|\, \operatorname{Supp}(u) \cap \operatorname{Supp}(v) \neq \emptyset\}.$$
\end{definition}

\begin{proposition} \label{prop:3graphs}
    The graphs $G_b$ and $G'_b$ have the same connected components.
\end{proposition}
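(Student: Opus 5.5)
We show that each graph's edges connect vertices lying in the same component of the other graph. Since $G_b$ and $G'_b$ share the vertex set $\mathcal F_b$, this proves that they have the same connected components.

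\emph{Edges of $G'_b$ lie in components of $G_b$.} Let $\{u,v\}$ be an edge of $G'_b$, and choose $i\in\operatorname{Supp}(u)\cap\operatorname{Supp}(v)$. Write $e_i$ for the $i$th standard basis vector. Then $u-e_i$ and $v-e_i$ lie in $\NN^n$ and have the same $A$-degree, so $x^{u-e_i}-x^{v-e_i}\in I_A$. Moreover,
\[
b-\deg_A(x^{v})=0,\qquad b-\deg_A(x^{v-e_i})=Ae_i\in\NN A .
\]
Since $\ker_\ZZ(A)\cap\NN^n=\{0\}$, we have $Ae_i\neq 0$, so $b-\deg_A(x^{v-e_i})\in\NN A\setminus\{0\}$. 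Hence $x^{u-e_i}-x^{v-e_i}$ is one of the generators of $I_{A,b}$, and so
\[
x^u-x^v=x_i\bigl(x^{u-e_i}-x^{v-e_i}\bigr)\in I_{A,b}.
\]
Therefore $\{u,v\}$ is in fact an edge of $G_b$, which is stronger than needed.

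\emph{Edges of $G_b$ lie in components of $G'_b$.} Let $\{u,v\}$ be an edge of $G_b$, so $x^u-x^v\in I_{A,b}$. The generators $x^{u'}-x^{v'}$ of $I_{A,b}$ are $A$-graded binomials with $\deg_A(x^{u'})=\deg_A(x^{v'})$ strictly below $b$ in the order given by $\NN A$. We use the standard fact about graded binomial ideals (as in \cite{CKT_2007}): a binomial $x^u-x^v$ of degree $b$ lies in $I_{A,b}$ if and only if $u$ and $v$ are joined by a chain
\[
u=w_0,\,w_1,\,\dots,\,w_k=v
\]
in $\mathcal F_b$, where each step is of the form $w_{j-1}=m+u'_j$ and $w_j=m+v'_j$ for some generator $x^{u'_j}-x^{v'_j}$ and some $m\in\NN^n$. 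Because $\deg_A(x^{u'_j})\neq b$ while $\deg_A(x^{w_{j-1}})=b$, we get $m\neq 0$. Then any $i\in\operatorname{Supp}(m)$ lies in $\operatorname{Supp}(w_{j-1})\cap\operatorname{Supp}(w_j)$. So each step of the chain is an edge of $G'_b$, and $u$ and $v$ lie in the same component of $G'_b$.

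The main obstacle is justifying the chain description, because $I_{A,b}$ is defined only by generators. To do this, write
\[
x^u-x^v=\sum_j c_j\,x^{m_j}\bigl(x^{u'_j}-x^{v'_j}\bigr)
\]
and take the degree-$b$ homogeneous component of both sides, so that we may assume every term has degree $b$. Now form the graph on monomials of degree $b$ whose edges are the pairs $\{m_j+u'_j,\ m_j+v'_j\}$. If $u$ and $v$ were in different components of this graph, let $\pi$ be the linear functional that sends monomials in the component of $u$ to $1$ and all other monomials to $0$. Applying $\pi$ to the right-hand side gives $0$, since both monomials of each term lie in the same component. Applying it to the left-hand side gives $1$. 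This contradiction shows that $u$ and $v$ are in the same component, which produces the chain.

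Combining the two parts, every edge of either graph joins vertices lying in the same component of the other graph, so $G_b$ and $G'_b$ have the same connected components.
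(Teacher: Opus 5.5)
Your proof is correct and follows essentially the paper's route. The first part is exactly Lemma~\ref{lem:edgeinclusion}, and the second part is the paper's argument: write $x^u-x^v$ as a telescoping sum of terms $x^{c_i}(x^{u_i}-x^{v_i})$ built from generators of $I_{A,b}$, and note that $c_i\neq 0$. There are two small differences. You argue edge by edge, so you never need Lemma~\ref{lem:CKTprop2.2} on components of $G_b$ being complete. Your homogeneous-component and linear-functional argument also justifies the existence of the telescoping chain, which the paper simply asserts.
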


The proof of this result is a little technical and unrelated to the current narrative, so it appears in the appendix as Proposition~\ref{prop:samegraphccs}.

\begin{theorem}\label{thm:DAexact}
    For $A$ homogeneous and $z \in \ker_\ZZ(A)$ with $Az^+=Az^-=b$, then $z \in D(A)$ if and only if $z^+,z^-$ are isolated vertices of $G_b$.
\end{theorem}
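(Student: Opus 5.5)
By Proposition~\ref{prop:3graphs}, $G_b$ and $G'_b$ have the same connected components. So $z^+$ is isolated in $G_b$ if and only if it is isolated in $G'_b$. I will therefore prove the statement with $G'_b$ in place of $G_b$, and translate everything through Proposition~\ref{prop:drredef}. The one thing to keep in mind is that edges of $G'_b$ come from support overlap, while distance reduction requires both a divisibility condition and a support-overlap condition.

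For ($\Leftarrow$), suppose $z^+$ and $z^-$ are isolated in $G'_b$, and let $\hat z$ distance-reduce $z$. Up to signs, Proposition~\ref{prop:drredef} gives $\hat z^+\le z^+$ and $\operatorname{Supp}(\hat z^-)\cap\operatorname{Supp}(z^-)\neq\emptyset$. Set $w=z^+-\hat z=(z^+-\hat z^+)+\hat z^-\in\mathcal F_b$. Then $\operatorname{Supp}(w)\supseteq\operatorname{Supp}(\hat z^-)$, so $w$ meets $\operatorname{Supp}(z^-)$ and is adjacent to $z^-$ in $G'_b$ unless $w=z^-$. Since $z^-$ is isolated, $w=z^-$, so $\hat z=z^+-z^-=z$. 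The other sign cases are symmetric: use $z^-$ in place of $z^+$, or $-\hat z$ in place of $\hat z$. Hence $z\in D(A)$.

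For ($\Rightarrow$), suppose $z\in D(A)$. First note that $z^+$ and $z^-$ are not adjacent, since their supports are disjoint. Suppose for contradiction that some $w\in\mathcal F_b$ with $w\ne z^\pm$ is adjacent in $G'_b$ to $z^-$, so $\operatorname{Supp}(w)\cap\operatorname{Supp}(z^-)\ne\emptyset$. Consider $\hat z=z^+-w\in\ker_\ZZ(A)$.
\begin{itemize}
\item Here $\hat z^+=(z^+-w)^+\le z^+$ automatically.
\item We also have $\hat z^-=(w-z^+)^+$, whose support contains every index of $\operatorname{Supp}(w)$ outside $\operatorname{Supp}(z^+)$. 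In particular it contains the indices of $\operatorname{Supp}(w)\cap\operatorname{Supp}(z^-)$, because $z^+$ and $z^-$ have disjoint supports.
\end{itemize}
So, by Proposition~\ref{prop:drredef}, $\hat z$ distance-reduces $z$. Moreover $\hat z\ne z$ because $w\ne z^-$, and $\hat z\neq -z$, since otherwise $w=2z^+ - z^-$ would have a negative entry. This contradicts $z\in D(A)$.

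The case where $z^+$ has a neighbour is symmetric, using $\hat z=z^- - w$ with the roles of the signs exchanged. Therefore both $z^+$ and $z^-$ are isolated in $G'_b$, and hence in $G_b$. The only delicate point is the bookkeeping that $\hat z\ne\pm z$ and the sign conventions; the substantive input is entirely Proposition~\ref{prop:3graphs}, which I take as given.
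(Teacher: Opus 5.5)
Your proposal is correct and follows essentially the same route as the paper. Both pass to $G'_b$ via Proposition~\ref{prop:3graphs}. For ($\Rightarrow$), both build $\hat z=z^+-w$ from a neighbour $w$ of $z^-$. For ($\Leftarrow$), both observe that $z^+-\hat z\in\mathcal F_b$ overlaps $z^-$, and then conclude with Proposition~\ref{prop:drredef}. Your explicit checks that $\hat z\neq\pm z$, and that $z^+$ and $z^-$ are never adjacent, make precise points the paper leaves implicit. This matters because $-z$ always distance-reduces $z$.
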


\begin{proof}
     $(\Longrightarrow)$ Suppose $z \in D(A)$. Aiming for a contradiction, suppose $z^+$ and $z^-$ are not both isolated vertices in $G_b$. Up to a change of sign of $z$, we can take $z^-$ not to be an isolated vertex in $G_b$. By Proposition \ref{prop:3graphs}, $z^-$ is not an isolated vertex in $G'_b$ either so we can find some $m \in \mathcal{F}_b$ such that $m \neq z^-$ and $\{m,z^-\} \in E(G'_b)$. Now, let $\hat z:=z^+-m \in \ker_\ZZ(A)$ where we notice $\hat z \neq z$. Since $m \in \NN^n$ then $(-m)^+=0$, so $\hat z^+=(z^+-m)^+ \le (z^+)^+ + (-m)^+=z^+$. Also, since $\{m,z^-\} \in E(G'_b)$, we can find some $i \in \operatorname{Supp}(m) \cap \operatorname{Supp}(z^-)$. Since $i \in \operatorname{Supp}(z^-)$ and $\operatorname{Supp}(z^+) \cap \operatorname{Supp}(z^-)=\emptyset$ then $i \not\in \operatorname{Supp}(z^+)$ so $z^+_i=0$. Thus, $\hat z_i=z^+_i-m_i=-m_i<0$ since $i \in \operatorname{Supp}(m)$ so $i \in \operatorname{Supp}(\hat z^-)$. Putting this all together we have $\hat z^+ \le z^+$ and $i \in \operatorname{Supp}(\hat z^-) \cap \operatorname{Supp}(z^-)$ so, by Proposition \ref{prop:drredef}, $z$ is distance-reduced by $\hat z$ where $\hat z \neq z$. Thus, $z \not\in D(A)$ which is a contradiction.
    
    \smallskip
    $(\Longleftarrow)$ Suppose $z^+,z^-$ are isolated vertices of $G_b$. By Proposition \ref{prop:3graphs}, this implies that $z^+$ and $z^-$ are isolated vertices of $G'_b$ as well. Aiming for a contradiction, suppose $z \notin\ D(A)$ i.e., we have some $\hat z \in \ker_\ZZ(A)$ such that $\hat z$ distance-reduces $z$ and $\hat z \neq z$. By Proposition~\ref{prop:drredef}, up to the signs of $\hat z$ and $z$, we have $\hat z^+ \le z^+$ and $\operatorname{Supp}(\hat z^-) \cap \operatorname{Supp}(z^-)\neq \emptyset$. Since $\hat z^+ \le z^+$ then $z^+-\hat z^+ \in \NN^n$ so $z^+-\hat z=z^+-\hat z^++\hat z^- \in \NN^n$. In addition, since $\hat z \in \ker_\ZZ(A)$, $A(z^+-\hat z)=Az^+=b$ and thus $z^+-\hat z \in \mathcal{F}_b$. Since $\operatorname{Supp}(\hat z^-) \cap \operatorname{Supp}(z^-)$ is non-empty then we can take $i \in \operatorname{Supp}(\hat z^-) \cap \operatorname{Supp}(z^-)$. Then, since $i \in \operatorname{Supp}(\hat z^-)$, it follows that $i \not\in \operatorname{Supp}(\hat z^+)$ so $i \in \operatorname{Supp}(z^+-\hat z)$. Then, we have $i \in \operatorname{Supp}(z^+-\hat z) \cap \operatorname{Supp}(z^-)$ meaning $\{z^+-\hat z,z^-\} \in E(G'_b)$ which contradicts the fact that $z^-$ is an isolated vertex of $G'_b$. We have a contradiction so $z \in D(A)$.
    

    
    
\end{proof}

\begin{corollary}
    For $A$ homogeneous, we have
    $S(A) \subseteq D(A) \subseteq U(A) \subseteq \operatorname{Gr}(A)$.
\end{corollary}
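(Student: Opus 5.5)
We prove the three inclusions $S(A) \subseteq D(A)$, $D(A) \subseteq U(A)$ and $U(A) \subseteq \operatorname{Gr}(A)$ one at a time, using Theorem~\ref{thm:DAexact} as the bridge between distance-irreducibility and the fiber graphs $G_b$. The last inclusion is standard and does not need homogeneity: every element of a minimal Markov basis is primitive, since a proper conformal decomposition $z = u+v$ would express $\beta(z)$ through binomials of strictly smaller $A$-degree. This contradicts minimality (see \cite[Chapter 4]{sturmfels1996grobner}). So the work is in the first two inclusions.

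For $D(A) \subseteq U(A)$, let $z \in D(A)$ and $b = Az^+$. By Theorem~\ref{thm:DAexact}, $z^+$ and $z^-$ are isolated vertices of $G_b$. We then use the well-known description of minimal Markov bases in terms of fiber graphs (\cite{CKT_2007}, \cite{diaconis1998algebraic}). A minimal Markov basis is obtained by choosing, for each degree $b$ and each pair of connected components of $G_b$, edges forming a spanning tree on the components. Any pair $\{u,v\}$ with $u,v$ in different components can be chosen as one of these edges. Since $z^+$ and $z^-$ are isolated, they lie in different components. Moreover $G_b$ has at least two components, because $z^+ \neq z^-$. Hence some minimal Markov basis contains $z$ (up to sign), so $z \in U(A)$.

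For $S(A) \subseteq D(A)$, let $z$ be indispensable. Indispensability means that $G_b$, for $b = Az^+$, has exactly two vertices $z^+$ and $z^-$ and no edges, since any other configuration allows an alternative choice of Markov move in degree $b$. Thus both vertices are isolated, and Theorem~\ref{thm:DAexact} gives $z \in D(A)$.

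Alternatively, one can argue with Proposition~\ref{prop: no semi-conformal decomp iff indispensable}. Suppose $\hat z \neq z$ distance-reduces $z$. After adjusting signs, we have $\hat z^+ \le z^+$ and the supports of $\hat z^-$ and $z^-$ meet, by Proposition~\ref{prop:drredef}. Set $v = z - \hat z$. Then $z = \hat z + v$ should be a proper semi-conformal decomposition: on $\operatorname{Supp}(\hat z^+)$ we have $v = z^+ - \hat z^+ \ge 0$, and where $v_i<0$ we need $\hat z_i \le 0$. We also need $v \neq 0$, which holds because $\hat z \neq z$. The main obstacle is the sign bookkeeping in this second route, particularly on coordinates where $z_i<0<\hat z_i$; these are excluded because $\hat z^+ \le z^+$ forces $\operatorname{Supp}(\hat z^+)\subseteq \operatorname{Supp}(z^+)$. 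For this reason I would present the fiber-graph argument as the primary proof.
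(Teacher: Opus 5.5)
Your primary argument is the paper's proof. Both $S(A)\subseteq D(A)$ and $D(A)\subseteq U(A)$ go through Theorem~\ref{thm:DAexact} together with the fiber-graph description of indispensable elements and minimal Markov bases from \cite{CKT_2007} (Corollary~2.10 and Theorem~2.6 there), and $U(A)\subseteq\operatorname{Gr}(A)$ is the standard fact. Your alternative semi-conformal route for $S(A)\subseteq D(A)$ is also sound; because it uses only $\hat z^+\le z^+$ and $\hat z\neq 0,\pm z$, it shows that this first inclusion does not even need homogeneity.
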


\begin{proof}
    Take $z \in S(A)$ and let $b = Az^+=Az^-$. Then, by \cite[Corollary 2.10]{CKT_2007}, $G_b$ consists of two connected components, $\{z^+\}$ and $\{z^-\}$. Thus, $z^+,z^-$ are isolated vertices of $G_b$ so, by Theorem \ref{thm:DAexact}, $z \in D(A)$. So we have $S(A) \subseteq D(A)$.

    Now, take $z \in D(A)$. By Theorem \ref{thm:DAexact}, $z^+$ and $z^-$ are isolated vertices. By \cite[Theorem~2.6]{CKT_2007}, there is a minimal Markov basis containing $z$ and therefore $z \in U(A)$. And so we have shown that $D(A) \subseteq U(A)$.

    Finally, it is well known that $U(A) \subseteq \operatorname{Gr}(A)$, for example, see \cite{sturmfels1996grobner}.
\end{proof}

In Section \ref{sec:Bouquets}, we will see that, in the context of bouquet decompositions of toric ideals, similarly to above, the homogeneity condition guarantees that distance-reduction properties are conserved between toric ideals with the same bouquet ideal and signature whereas these are not guaranteed to hold in the non-homogeneous case.

\section{Toric ideals of graphs}\label{sec: graphs}

In this section we study distance reduction for toric ideals of graphs. For us, a graph means a finite simple undirected graph, i.e., it has no loops or multi-edges. Given a graph $G$, its corresponding incidence matrix $A_G$ has columns indexed by edges $e \in E(G)$, rows indexed by vertices $i \in V(G)$, and entries $A_{i,e} = 1$ if $i \in e$ and $A_{i,e} = 0$ if $i \notin e$. We note that every column of $A_G$ contains two ones. Thus, if we add up all the rows and divide by 2, we get the vector $\mathbf{1}_n$ meaning $A_G$ is homogeneous. Thus, throughout this section, we will use the restatement of the distance-reduction condition, namely Proposition \ref{prop:drredef}. We will write the toric ideal corresponding to a graph as $I_G:=I_{A_G}$ which now lives in the ring $R=k[e\,|\,e \in E(G)]$. We note that supports of vectors are now sets of edges of $G$.

In Section~\ref{sec: graphs markov bases}, we introduce some notation and state some key results about toric ideals of graphs.
Then, in Section~\ref{sec: graphs complete intersection} we prove the following conjecture in the case that $I_G$ is a complete intersection toric ideal.

First we recall that, for a integer matrix $A$, the circuits of $A$ have been shown to characterise the distance reduction property for complete intersection monomial curves \cite[Corollary~5.2]{CK_2024}. We conjecture that the same holds for toric ideals of graphs.

\begin{conjecture}\label{conj: distance reduction for graph ideals by circuits}
    Let $G$ be a simple graph and $M$ a minimal Markov basis for $I_G$. Then $M$ is distance-reducing if and only if $M$ reduces the distance of the circuits of $A_G$.
\end{conjecture}

We have verified this conjecture for all graphs on at most $7$ vertices and all graphs with $8$ vertices and at most $16$ edges, see the supplementary code \cite{githubM2DistanceReduction}.
In addition, we only check graphs which are connected, are not bipartite, and do not contain $K_4$ as an induced subgraph. This is justified by Propositions \ref{prop:conjccs}, \ref{prop:nobipartite} and \ref{prop:noK4s} respectively.


\begin{proposition} \label{prop:nobipartite}
Suppose $G$ is a bipartite graph. Then Conjecture~\ref{conj: distance reduction for graph ideals by circuits} holds for $G$. 
\end{proposition}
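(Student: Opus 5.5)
For a bipartite graph $G$, the incidence matrix $A_G$ is totally unimodular. I would use this to show that the Graver basis of $A_G$ coincides with its set of circuits, i.e.\ ${\rm Gr}(A_G)=C(A_G)$. The conjecture then follows. The direction ``distance-reducing implies circuits are distance-reduced'' is trivial. For the converse, \cite{CK_2024} says that a Markov basis is distance-reducing if and only if it distance-reduces every element of ${\rm Gr}(A)$. If the Graver basis is exactly the set of circuits, then distance-reducing the circuits is the same as distance-reducing the Graver basis.

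To prove ${\rm Gr}(A_G)=C(A_G)$, I would use the graph-theoretic description recalled in the introduction. By \cite{ohsugi-hibi-quad-binoms} and \cite{sturmfels1996grobner}, Graver basis elements correspond to primitive even closed walks in $G$. By \cite[Proposition 4.2]{Vil_1995}, circuits correspond to even cycles, together with pairs of odd cycles that share one vertex or are joined by a path. A bipartite graph has no odd cycles, so its circuits are exactly the even cycles.

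The main step is to show that every primitive even closed walk $w$ in a bipartite graph is an even cycle. Suppose $w$ revisits some vertex $v$. Splitting $w$ at $v$ gives two closed subwalks. Every closed walk in a bipartite graph has even length, and its edges can be signed alternately $\pm$. I would check that this alternating sign pattern restricts consistently to each subwalk. Then each subwalk gives an element of $\ker_\ZZ(A_G)$, and the corresponding binomials give a conformal decomposition of $\beta(w)$, which contradicts primitivity. Some care is needed with the signs at the split vertex $v$. Since both subwalks have even length, the parity of positions along the walk is preserved, so the alternating signs agree. I expect this sign bookkeeping to be the only delicate part.

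Alternatively, I would invoke the classical fact that for a totally unimodular matrix every Graver element is a circuit (\cite[Chapter 4]{sturmfels1996grobner}), and avoid the walk argument entirely. Either route yields ${\rm Gr}(A_G)=C(A_G)$, which completes the proof.
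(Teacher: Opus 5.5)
Your proof is correct. The fallback you mention at the end is essentially the paper's argument. The paper cites \cite[Theorem 3.13]{tatakis2025unimodulartoricidealsgraphs} for the fact that $I_G$ is unimodular when $G$ is bipartite. It then uses \cite[Proposition 8.18]{sturmfels1996grobner} to get ${\rm Gr}(A_G)=C(A_G)$; note this is in Chapter~8 of Sturmfels, not Chapter~4 as you cite. It finishes with \cite[Theorem 7.4]{CK_2024}.

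Your main route is genuinely different: it replaces the unimodularity citations with a direct combinatorial argument in the walk language of Section~\ref{sec: graphs}. You split a primitive even closed walk $w=(v_0,e_1,\dots,e_{2q},v_0)$ at a repeated vertex $v_a=v_b$ with $a<b\le 2q-1$. This gives closed subwalks of lengths $b-a$ and $2q-(b-a)$, both positive and, since $G$ is bipartite, both even. Because $b-a$ is even, the parity-of-position signing of $w$ restricts to an alternating signing of each piece, including across the wrap-around points $e_{2q}\to e_1$ and $e_a\to e_{b+1}$.

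One detail should be stated explicitly. For primitive $w$ the monomials $E^+(w)$ and $E^-(w)$ are coprime, so no edge occurs at positions of both parities. This is what gives $z^{\pm}=z_1^{\pm}+z_2^{\pm}$ with no cancellation and guarantees $z_1,z_2\neq 0$, so you really obtain a proper conformal decomposition. The ``sign bookkeeping at $v$'' you flag is the easy part. Once this is in place, every Graver element is a cycle (closed walks of length $2$ give $0$), necessarily even, and hence a circuit.

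Your route is self-contained within the Ohsugi--Hibi/Villarreal framework the paper already uses, at the cost of some bookkeeping. The paper's route is a two-line appeal to the structural reason, unimodularity.
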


\begin{proof}
    By \cite[Theorem 3.13]{tatakis2025unimodulartoricidealsgraphs}, since $G$ is bipartite then $I_G$ is a unimodular toric ideal. Furthermore, by \cite[Proposition 8.18]{sturmfels1996grobner}, for a unimodular toric ideal, the set of circuits equals the Graver basis. By \cite[Theorem 7.4]{CK_2024}, if $M$ is a minimal Markov basis, $M$ is distance-reducing if and only if $M$ distance-reduces the Graver basis of $A_G$. Since the set of circuits is equal to the Graver basis then we immediately have that $M$ is distance-reducing if and only if $M$ distance-reduces the circuits of $A_G$.
\end{proof}

\begin{proposition} \label{prop:noK4s}
    Suppose $G$ is a graph that contains $K_4$ as an induced subgraph. Then Conjecture~\ref{conj: distance reduction for graph ideals by circuits} holds for $G$.
\end{proposition}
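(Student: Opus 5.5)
The plan is to show that when $G$ contains $K_4$ as an induced subgraph, \emph{both} sides of the equivalence in Conjecture~\ref{conj: distance reduction for graph ideals by circuits} are false for every minimal Markov basis $M$. The equivalence then holds trivially. The key object is a single fiber of degree $2$ that looks exactly like the fiber of the toric ideal of $K_4$.

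\textbf{Step 1: the fiber.} Let $\{1,2,3,4\}$ be the vertices of an induced $K_4$, and let $b = A_G(e_{12}+e_{34})$ be the vector with entry $1$ at each of these four vertices and $0$ elsewhere. An element of $\mathcal F_b$ is a set of edges of $G$ covering each of $1,2,3,4$ exactly once and no other vertex, i.e.\ a perfect matching of the induced subgraph on $\{1,2,3,4\}$. Since that subgraph is $K_4$, the fiber has exactly three elements:
\[u=e_{12}+e_{34},\qquad v=e_{13}+e_{24},\qquad w=e_{14}+e_{23}.\]
These have pairwise disjoint supports. Each difference $u-v$, $v-w$, $u-w$ is an even $4$-cycle, hence a circuit of $A_G$ by \cite[Proposition 4.2]{Vil_1995}.

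\textbf{Step 2: what a minimal Markov basis must contain.} By homogeneity and Lemma~\ref{lem:hompm}, any nonzero $\hat z\in\ker_\ZZ(A_G)$ has $\lVert \hat z^+\rVert=\lVert\hat z^-\rVert\ge 2$. A degree-one binomial would force $x_e=x_f$ with $e,f$ having the same endpoints, so $e=f$ in a simple graph. Hence the only proper nonzero degree below $b$ in $\NN A_G$ is that of a single edge, whose fiber has one element. Therefore $I_{A_G,b}=0$ and $G_b$ consists of three isolated vertices $u,v,w$.

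By the standard description of minimal Markov bases via fiber graphs (\cite[Theorem~2.6]{CKT_2007}), the elements of any minimal Markov basis $M$ lying in this fiber correspond to a spanning tree on the three components $\{u\},\{v\},\{w\}$. So, up to relabelling and signs, $M$ contains exactly two of the moves, say $u-v$ and $v-w$, and not $u-w$.

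\textbf{Step 3: the circuit $z=u-w$ is not distance-reduced by $M$.} We use Proposition~\ref{prop:drredef}. Suppose $\hat z\in M$ reduces $z$, up to signs. Then $\hat z^+\le u$ or $\hat z^+\le w$, and both have norm $2$. By the norm count in Step~2, $\hat z^+$ is then equal to $u$ or to $w$, and $\hat z^-$ is another element of $\mathcal F_b$. So $\hat z$ is $\pm(u-v)$ or $\pm(v-w)$, and in each case the relevant negative part is $v$. But $\operatorname{Supp}(v)$ is disjoint from both $\operatorname{Supp}(u)$ and $\operatorname{Supp}(w)$, so the support-overlap condition fails.

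Hence $M$ fails to distance-reduce the circuit $u-w$. Since circuits lie in $\ker_\ZZ(A_G)$, $M$ is also not distance-reducing. Both statements in the conjecture are false, so the equivalence holds.

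\textbf{Main obstacle.} The only delicate point is Step~2: justifying that $M$ meets this fiber in exactly two of the three moves, and that no element of $M$ from another degree is applicable to $u$ or $w$. Both reduce to the observation that $\lVert\hat z^+\rVert=2$ is forced. That observation rests on the absence of degree-one moves in a simple graph, so I would make that step fully explicit.
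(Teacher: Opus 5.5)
Your proof is correct and follows the same route as the paper. Both arguments exhibit the three matching moves on the $K_4$ fiber, note that $M$ must omit one of them, and check that the omitted $4$-cycle circuit can only be reduced by the other two degree-$2$ moves, which fail the support-overlap condition of Proposition~\ref{prop:drredef}. The one difference is that the paper gets ``$M$ omits one of the three'' from the elementary relation $\beta(z_2)=\beta(z_1)+\beta(z_3)$, whereas you use the spanning-tree description in \cite[Theorem~2.6]{CKT_2007}, which gives the slightly stronger (but unneeded) conclusion that exactly two of them lie in $M$.
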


\begin{proof}
    We will show that, given a minimal Markov basis $M$ of $A_G$, we can always find a circuit of $\ker_\ZZ(A_G)$ which is not distance-reduced by $A_G$.

    Indeed, let us label the vertices of the $K_4$ as $1,2,3,4$. Then, the toric ideal $I_G$ contains the following 3 binomials, $e_{12}e_{34}-e_{13}e_{24}$, $e_{12}e_{34}-e_{14}e_{23}$ and $e_{13}e_{24}-e_{14}e_{23}$ which correspond to $z_1,z_2,z_3 \in \operatorname{Gr}(A_G)$ respectively. Any minimal generating set of $I_G$ contains at most two of these binomials since $\beta(z_2)=\beta(z_1)+\beta(z_3)$. So, up to renumbering our vertices assume $z_3 \not\in M$. Suppose $z_3$ is distance-reduced by some element in $M$,
    say $z \in M$. Note that, since $G$ has no multi-edges, $I_G$ does not contain any linear binomials, thus $\beta(z)$ has degree at least two. By part (\ref{item:drbouquet1}) of Definition \ref{def:drbouquet}, it follows that $\deg(\beta(z)) = 2$ and, moreover, the set of vertices that appear among indices of the variables in $\beta(z)$ is exactly $1234$. Thus, the only possibilities for $z$ are $z_1$ and $z_2$ with $z_1^- \le z_3^+$ and $z_2^- \le z_3^-$.
    However, by Proposition \ref{prop:drredef}, neither $z_1$ nor $z_2$ distance-reduce $z_3$ since $\operatorname{Supp}(z_1^+) \cap \operatorname{Supp}(z_3^-)$ and $\operatorname{Supp}(z_2^+) \cap \operatorname{Supp}(z_3^+)$ are empty. Thus, $z_3$ is not distance-reduced by $M$ and, furthermore, by \cite[Proposition 4.2]{Vil_1995}, $z_3$ is a circuit.

    Thus, if the graph $G$ has $K_4$ as a subgraph, for any minimal Markov basis $M$, the claim: $M$ is distance-reducing if and only if $M$ distance-reduces the circuits, always holds. This is because all minimal Markov bases do not distance-reduce at least one circuit.
\end{proof}

In order to show why we only need to look at connected graphs when considering Conjecture \ref{conj: distance reduction for graph ideals by circuits}, we need to show that when a graph $G$ has disconnected components, then the Graver basis, the circuits and the minimal Markov bases are partitioned between the disconnected components of $G$.
In the following, we will treat the ideals $I_H$ and $I_K$ and their binomials as though they live in the larger ring $k[e\,|\, e \in E(G)]$. Specifically, we take the generators for $I_H$ or $I_K$ and take those binomials to be the generators in the larger ring.

\begin{lemma}
\label{lem:disjointidealsgraver}
    For $G$ a graph, let $G=H \sqcup K$ for some disjoint graphs $H$ and $K$. Then, $I_G=I_H+I_K$ as well as $\operatorname{Gr}(I_G)=\operatorname{Gr}(I_H) \sqcup \operatorname{Gr}(I_K)$.
\end{lemma}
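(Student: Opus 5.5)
The plan is to work entirely in vector notation and use the block structure of the incidence matrix. Order the edges of $G$ so that those of $H$ come first and those of $K$ second. Since no vertex of $H$ lies on an edge of $K$ and vice versa, the incidence matrix is block diagonal,
\[
A_G = \begin{bmatrix} A_H & 0 \\ 0 & A_K \end{bmatrix}.
\]
Consequently $\ker_\ZZ(A_G) = \ker_\ZZ(A_H) \oplus \ker_\ZZ(A_K)$. Every $z \in \ker_\ZZ(A_G)$ splits uniquely as $z = z_H + z_K$, where $z_H$ is supported on $E(H)$, $z_K$ on $E(K)$, and each lies in the respective kernel, with $\ker_\ZZ(A_H)$ regarded as the vectors of $\ker_\ZZ(A_G)$ supported on $E(H)$.

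\textbf{Ideals.} The inclusion $I_H + I_K \subseteq I_G$ is immediate, since a kernel vector of $A_H$ extended by zeros lies in $\ker_\ZZ(A_G)$. For the reverse inclusion, take $z = z_H + z_K$. The supports of $z_H$ and $z_K$ are disjoint, so $z^+ = z_H^+ + z_K^+$ and $z^- = z_H^- + z_K^-$, which gives
\[
x^{z^+} - x^{z^-} = x^{z_K^+}\bigl(x^{z_H^+} - x^{z_H^-}\bigr) + x^{z_H^-}\bigl(x^{z_K^+} - x^{z_K^-}\bigr).
\]
This lies in $I_H + I_K$. As $I_G$ is generated by the binomials $\beta(z)$, we get $I_G = I_H + I_K$.

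\textbf{Graver bases.} Since $z = z_H + z_K$ is conformal by the disjointness of supports, any $z \in \operatorname{Gr}(A_G)$ must have $z_H = 0$ or $z_K = 0$; otherwise this would be a proper conformal decomposition. So every Graver element lives in one block. It remains to show that for $z$ supported on $E(H)$, $z$ is Graver for $A_G$ if and only if it is Graver for $A_H$.
\begin{itemize}
\item Any conformal decomposition $z = u + v$ in $\ker_\ZZ(A_G)$ has $\operatorname{Supp}(u), \operatorname{Supp}(v) \subseteq \operatorname{Supp}(z) \subseteq E(H)$, by conformality. Hence $u, v \in \ker_\ZZ(A_H)$.
\item Conversely, any conformal decomposition in $\ker_\ZZ(A_H)$ is one in $\ker_\ZZ(A_G)$.
\end{itemize}
So the two notions agree, and the union is disjoint because nonzero vectors supported on $E(H)$ and on $E(K)$ are distinct. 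Translating through $\beta$ gives $\operatorname{Gr}(I_G) = \operatorname{Gr}(I_H) \sqcup \operatorname{Gr}(I_K)$.

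\textbf{Main obstacle.} There is no real obstacle here. The only point needing care is the conformality argument showing that the summands $u, v$ stay within $\operatorname{Supp}(z)$: if $u_i \neq 0$ with $z_i = 0$, then $u_i^+ + v_i^+ = 0 = u_i^- + v_i^-$, which forces $u_i = 0$, a contradiction.
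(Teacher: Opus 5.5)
Your proof is correct and follows essentially the same route as the paper: the block-diagonal splitting $z = z_H + z_K$, the same binomial identity to get $I_G \subseteq I_H + I_K$, and the observation that a Graver element cannot have both $z_H$ and $z_K$ nonzero. Your check that conformal summands stay inside $\operatorname{Supp}(z)$ also explicitly justifies the inclusion $\operatorname{Gr}(I_H) \subseteq \operatorname{Gr}(I_G)$, which the paper only asserts with ``Similarly''.
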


\begin{proof}
    Since $I_H \subseteq I_G$ and $I_K \subseteq I_G$, then $I_H+I_K \subseteq I_G$. Similarly, $\operatorname{Gr}(I_H) \subseteq \operatorname{Gr}(I_G)$, $\operatorname{Gr}(I_K) \subseteq \operatorname{Gr}(I_G)$ and $\operatorname{Gr}(I_H) \subseteq k[e \,|\, e \in E(H)]$ and $\operatorname{Gr}(I_K) \subseteq k[e \,|\, e \in E(K)]$. Thus, since $E(H) \cap E(K)$ is empty then so is $\operatorname{Gr}(I_H) \cap \operatorname{Gr}(I_K)$ and thus $ \operatorname{Gr}(I_H) \sqcup \operatorname{Gr}(I_K) \subseteq \operatorname{Gr}(I_G)$. 
    
    Now, take $x^{z^+}-x^{z^-} \in I_G$ for some $z \in \ker_\ZZ(A_G)$. Since $H$ and $K$ are disjoint, then $A_G$ is block diagonal with $A_G=A_H \oplus A_K$. In particular, we have a natural conformal decomposition of $z$ into $z=z_H+z_K$ where $z^+=z_H^+ + z_K^+$ and $z^-=z_H^- + z_K^-$. Then, 
    $$x^{z^+}-x^{z^-}=x^{z_H^+}x^{z_K^+}-x^{z_H^-}x^{z_K^-}=x^{z_K^+}(x^{z_H^+}-x^{z_H^-})+x^{z_H^-}(x^{z_K^+}-x^{z_K^-})$$
    lies in $I_H+I_K$. Since $I_G$ is generated by elements of the form $x^{z^+}-x^{z^-}$ for $z \in \ker_\ZZ(A_G)$ then $I_G \subseteq I_H+I_K$. Furthermore, if $\beta(z) \in \operatorname{Gr}(I_G)$ for some $z \in \operatorname{Gr}(A_G)$, then, since the Graver basis by definition contains elements with no proper conformal decomposition, either $z_H$ or $z_K$ is zero. Since $\beta(z)$ is primitive in a larger ring, it is also primitive in a smaller ring meaning $\beta(z) \in \operatorname{Gr}(I_H) \sqcup \operatorname{Gr}(I_K)$. Since $z$ was arbitrary then $\operatorname{Gr}(I_G) \subseteq \operatorname{Gr}(I_H) \sqcup \operatorname{Gr}(I_K)$.
\end{proof}

\begin{remark} \label{rem:disjointcomps}
    Now decompose $G$ into its connected components as $G=\bigsqcup_k H_k$ for some connected components $H_k$. By repeated use of Lemma \ref{lem:disjointidealsgraver}, we have the partition $\operatorname{Gr}(I_G)=\bigsqcup_k \operatorname{Gr}(I_{H_k})$. Then, for any $z \in \operatorname{Gr}(A_G)$, there is some $j$ such that $\beta(z) \in \operatorname{Gr}(I_{H_j})$ and some corresponding element $z' \in \ker_\ZZ(A_{H_j})$, which we now identify with $z$. This is justified since all ideal-theoretic properties (such as being primitive, a circuit, etc.) are carried over between $z$ and $z'$ since they have the same binomial $\beta(z)=\beta(z')$ in the ideal $I_G$. Thus, we can write $\operatorname{Gr}(A_G)=\bigsqcup_k \operatorname{Gr}(A_{H_k})$.
\end{remark}

\begin{corollary} \label{cor:disjointcircuits}
    For $G$ a graph where $G=\bigsqcup_k H_k$ for some connected components $H_k$ then $C(A_G)=\bigsqcup_k C(A_{H_k})$.
\end{corollary}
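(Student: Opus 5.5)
We argue by induction on the number of components, so it suffices to treat $G = H \sqcup K$ and then iterate, just as in Remark~\ref{rem:disjointcomps}. The plan is to use the Graver partition from Lemma~\ref{lem:disjointidealsgraver}, together with the inclusion $C(A) \subseteq \operatorname{Gr}(A)$ recalled after Definition~\ref{def: conformal and semi-conformal decomposition}. These two facts place every circuit of $A_G$ inside $\operatorname{Gr}(A_H)$ or $\operatorname{Gr}(A_K)$. What remains is to check that support-minimality is the same whether it is measured in the big kernel or in the small one.

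For the inclusion $C(A_G) \subseteq C(A_H) \sqcup C(A_K)$, take a circuit $z$ of $A_G$. Then $z \in \operatorname{Gr}(A_G)$, so by Lemma~\ref{lem:disjointidealsgraver} we may assume $\beta(z) \in \operatorname{Gr}(I_H)$. Hence $\operatorname{Supp}(z) \subseteq E(H)$, and $z$ is identified with a vector $z' \in \ker_\ZZ(A_H)$. Suppose some nonzero $w \in \ker_\ZZ(A_H)$ had $\operatorname{Supp}(w) \subsetneq \operatorname{Supp}(z')$. Extending $w$ by zeros on $E(K)$ gives an element of $\ker_\ZZ(A_G)$, because $A_G = A_H \oplus A_K$ is block diagonal. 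This contradicts the minimality of $\operatorname{Supp}(z)$. The entries of $z'$ are those of $z$, so they are still relatively prime, and therefore $z' \in C(A_H)$.

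For the reverse inclusion, take $z' \in C(A_H)$ and extend it by zeros to $z \in \ker_\ZZ(A_G)$. Let $w \in \ker_\ZZ(A_G)$ be nonzero with $\operatorname{Supp}(w) \subseteq \operatorname{Supp}(z) \subseteq E(H)$. Then $w$ has zero $K$-block, so its $H$-block $w_H$ lies in $\ker_\ZZ(A_H)$ and has the same support. Minimality of $z'$ forces $\operatorname{Supp}(w) = \operatorname{Supp}(z)$, so $z$ is a circuit of $A_G$. Finally, the union is disjoint because nonzero vectors supported in $E(H)$ and in $E(K)$ cannot coincide, as $E(H) \cap E(K) = \emptyset$.

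The only delicate point is the identification of vectors across ambient spaces: a vector of $\ker_\ZZ(A_G)$ supported in $E(H)$ corresponds exactly to a vector of $\ker_\ZZ(A_H)$. This follows directly from the block-diagonal form, so I expect no real obstacle. Iterating over the components $H_k$ then gives $C(A_G) = \bigsqcup_k C(A_{H_k})$.
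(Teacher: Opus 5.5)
Your proof is correct and follows the same route as the paper: combine $C(A)\subseteq\operatorname{Gr}(A)$ with the Graver partition of Lemma~\ref{lem:disjointidealsgraver}. You additionally use the block-diagonal form $A_G=A_H\oplus A_K$ to check that minimal support and coprimality transfer between $\ker_\ZZ(A_G)$ and $\ker_\ZZ(A_H)$, a step the paper leaves implicit in Remark~\ref{rem:disjointcomps}.
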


\begin{proof}
    Since the circuits of a toric ideal are always a subset of the Graver basis, (see \cite{sturmfels1996grobner}) then $C(A_G)=\bigsqcup_k C(A_{H_k})$ comes from $\operatorname{Gr}(I_G)=\bigsqcup_k \operatorname{Gr}(I_{H_k})$, following from Lemma \ref{lem:disjointidealsgraver}.
\end{proof}

\begin{corollary}
\label{cor:disjointmarkov}
    If $M$ is a minimal Markov basis of $I_G$, then there is a partition $M= \bigsqcup_k M_{H_k}$ where $M_{H_k}$ is a minimal Markov basis of $I_{H_k}$.
\end{corollary}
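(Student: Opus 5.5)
We want to show: if $M$ is a minimal Markov basis of $I_G$ with $G=\bigsqcup_k H_k$, then $M$ splits as $\bigsqcup_k M_{H_k}$ with each $M_{H_k}$ a minimal Markov basis of $I_{H_k}$.

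\textbf{Step 1: each element of $M$ lives on a single component.} Since $M$ is minimal, it is contained in the universal Markov basis $U(A_G)$. Recall that $U(A_G)\subseteq \operatorname{Gr}(A_G)$ \cite{sturmfels1996grobner}, and by Remark~\ref{rem:disjointcomps} we have $\operatorname{Gr}(A_G)=\bigsqcup_k \operatorname{Gr}(A_{H_k})$. Hence every $z\in M$ has support inside $E(H_j)$ for exactly one $j$. Setting $M_{H_k}:=\{z\in M : \operatorname{Supp}(z)\subseteq E(H_k)\}$ therefore gives a partition $M=\bigsqcup_k M_{H_k}$.

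\textbf{Step 2: each $M_{H_k}$ generates $I_{H_k}$.} Fix $j$ and use the grading of $R=k[e \mid e\in E(G)]$ by $A_G$-degree. Since $A_G=\bigoplus_k A_{H_k}$ is block diagonal, the degree space splits as a direct sum over the vertex sets $V(H_k)$. A binomial of $I_{H_j}$ has $A_G$-degree supported only on the $V(H_j)$ coordinates. Write such a binomial $f$ as a homogeneous combination $f=\sum_i c_i\, x^{w_i}\beta(z_i)$ with $z_i\in M$ and $\deg(x^{w_i}\beta(z_i))=\deg f$. Each monomial $x^{w_i}x^{z_i^{\pm}}$ then has degree supported on $V(H_j)$. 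Because every column of $A_G$ is nonnegative with two ones, any variable appearing must be an edge of $H_j$. So $w_i$ and $z_i$ are supported in $E(H_j)$, which gives $z_i\in M_{H_j}$, and all coefficients lie in $k[e \mid e\in E(H_j)]$. Thus $M_{H_j}$ is a Markov basis of $I_{H_j}$.

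\textbf{Step 3: minimality of each $M_{H_k}$.} Suppose $z\in M_{H_j}$ were redundant in $M_{H_j}$ as a generator of $I_{H_j}$. Then $\beta(z)\in\langle \beta(M_{H_j}\setminus\{z\})\rangle$, so $\beta(z)$ is generated by $M\setminus\{z\}$ in $R$. This contradicts the minimality of $M$.

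The only delicate point is Step 2. Restricting a generating expression in the large ring to the subring uses the nonnegativity of $A_G$ to force supports. Alternatively, one can set the variables of the other components to $1$, which is a ring map $R\to k[E(H_j)]$ sending $\beta(z)$ to $0$ for $z\notin M_{H_j}$ and fixing $I_{H_j}$. This substitution argument is cleaner, and it is the one I would use.
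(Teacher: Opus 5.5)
Your proposal is correct and follows the same three-step structure as the paper. You partition $M$ via $M\subseteq \operatorname{Gr}(A_G)=\bigsqcup_k \operatorname{Gr}(A_{H_k})$, show each $M_{H_k}$ generates $I_{H_k}$, and inherit minimality from $M$. The only difference is that you actually justify the generation step, via the substitution $e\mapsto 1$ for $e\notin E(H_j)$ (which kills $\beta(z)$ for $z\notin M_{H_j}$ and fixes $I_{H_j}$) or via the $A_G$-grading, whereas the paper simply asserts $\langle \beta(z)\mid z\in M_{H_k}\rangle = I_{H_k}$ from $I_G=\sum_k I_{H_k}$.
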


\begin{proof}
    Since any minimal Markov basis lies in the Graver basis (see \cite{sturmfels1996grobner}), the partition $\operatorname{Gr}(A_G)=\bigsqcup_k \operatorname{Gr}(A_{H_k})$ induces the partition $M= \bigsqcup_k M_{H_k}$ where $\{\beta(z) \,|\, z \in M_{H_k}\} \subseteq I_{H_k}$. Furthermore, by repeated use of Lemma \ref{lem:disjointidealsgraver}, $I_G=\sum_k I_{H_k}$ and, since $M$ is a Markov basis of $I_G$, then $\langle \beta(z) \,|\, z \in M_{H_k} \rangle = I_{H_k}$. Finally, $M$ is minimal so each $M_{H_k}$ is a minimal Markov basis of $I_{H_k}$.
\end{proof}

\begin{proposition} \label{prop:conjccs}
    For a graph $G$, if Conjecture~\ref{conj: distance reduction for graph ideals by circuits} holds for all of the connected components of $G$, then the conjecture also holds for $G$.
\end{proposition}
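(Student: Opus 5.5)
The plan is to reduce everything to the components using the partitions already established. Write $G=\bigsqcup_k H_k$ with the $H_k$ the connected components. Let $M$ be a minimal Markov basis of $I_G$. By Corollary~\ref{cor:disjointmarkov}, $M=\bigsqcup_k M_{H_k}$ with each $M_{H_k}$ a minimal Markov basis of $I_{H_k}$. Remark~\ref{rem:disjointcomps} and Corollary~\ref{cor:disjointcircuits} give $\operatorname{Gr}(A_G)=\bigsqcup_k \operatorname{Gr}(A_{H_k})$ and $C(A_G)=\bigsqcup_k C(A_{H_k})$. One direction of the conjecture is trivial: distance-reducing means reducing every Graver element, and circuits are Graver elements. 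So it remains to show that if $M$ distance-reduces all circuits of $A_G$, then $M$ distance-reduces all of $\operatorname{Gr}(A_G)$. By \cite[Theorem 7.4]{CK_2024}, this suffices to conclude that $M$ is distance-reducing.

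The key step is a locality statement: for $z\in\operatorname{Gr}(A_{H_j})$ and $\hat z\in M$, if $\hat z$ distance-reduces $z$, then $\hat z\in M_{H_j}$, and conversely distance reduction by elements of $M_{H_j}$ is the same whether computed in $A_{H_j}$ or in $A_G$. To prove this, use Proposition~\ref{prop:drredef}, since $A_G$ is homogeneous. Up to signs, reduction requires $\hat z^+\le z^+$. Because $\hat z\neq 0$ lies in the kernel and $\ker_\ZZ(A)\cap\NN^n=\{0\}$, we have $\hat z^+\neq 0$, so $\operatorname{Supp}(\hat z)$ meets $E(H_j)$. Each element of $M$ is supported in a single component, so $\hat z\in M_{H_j}$.

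Moreover, both conditions of Proposition~\ref{prop:drredef} involve only coordinates, and $A_{H_j}$ is homogeneous as a graph incidence matrix. Hence the criterion is literally identical in the two ambient settings once we identify vectors supported on $E(H_j)$, as in Remark~\ref{rem:disjointcomps}.

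With this, the argument runs as follows. Suppose $M$ reduces all circuits of $A_G$. Then for each $k$ it reduces every circuit of $A_{H_k}$. By locality, the reducing element lies in $M_{H_k}$, so $M_{H_k}$ reduces the circuits of $A_{H_k}$. By the hypothesis that the conjecture holds for $H_k$, $M_{H_k}$ is distance-reducing for $A_{H_k}$, so in particular it reduces all of $\operatorname{Gr}(A_{H_k})$. Transferring back, $M$ reduces every element of $\operatorname{Gr}(A_G)=\bigsqcup_k\operatorname{Gr}(A_{H_k})$, and we conclude.

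The main obstacle is the bookkeeping in the locality step. We must check carefully that the definition of distance-reducing, with all four sign cases, transfers between $\ker_\ZZ(A_{H_k})$ and $\ker_\ZZ(A_G)$ under the identification. I would handle it uniformly by applying Proposition~\ref{prop:drredef} to both sides.
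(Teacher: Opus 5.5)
Your proposal is correct and follows essentially the paper's route. Both arguments partition $M$, the circuits and the Graver basis by components, and both use the locality argument that a reducing element must satisfy $\hat z^{\pm}\le z^{\pm}$ and so lie in the same component. Both then conclude with \cite[Theorem 7.4]{CK_2024}. The only difference is cosmetic: you argue directly over every component, while the paper argues by contradiction on the single component containing an unreduced non-circuit Graver element.
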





\begin{proof}
    
   Decompose $G$ into its connected components as $G=\bigsqcup_{k} H_k$ and suppose Conjecture~\ref{conj: distance reduction for graph ideals by circuits} holds for $H_k$ for all $k$. Seeking a contradiction, we suppose that the conjecture does not hold for $G$. Then, there is some minimal Markov basis $M$ for $I_G$ which distance-reduces the circuits $C(A_G)$ of $A_G$ but is not distance-reducing. By \cite[Theorem 7.4]{CK_2024}, since $M$ is not distance-reducing then $M$ does not distance-reduce the Graver basis $\operatorname{Gr}(A_G)$ so we can find $z \in \operatorname{Gr}(A_G) \backslash C(A_G)$ which is not distance-reduced by $M$. 
   
   As in Remark \ref{rem:disjointcomps}, there is a $j$ such that $\beta(z) \in \operatorname{Gr}(I_{H_j})$ and we treat $z$ as an element of $\ker_\ZZ(A_{H_j})$. In particular, $z \in \operatorname{Gr}(A_{H_j}) \backslash C(A_{H_j})$. Now, by Corollaries \ref{cor:disjointcircuits} and \ref{cor:disjointmarkov}, we have partitions $C(A_G)=\bigsqcup_k C(A_{H_k})$ and $M=\bigsqcup_k M_{H_k}$ where $M_{H_k}$ is a minimal Markov basis of $I_{H_j}$. In order to find a contradiction we will show that Conjecture \ref{conj: distance reduction for graph ideals by circuits} fails for $H_j$ by showing that $M_{H_j}$, which is a minimal Markov basis of $I_{H_j}$, distance-reduces the circuits $C(A_{H_j})$ but not $z$, which is not a circuit. 
   
   Take $c \in C(A_{H_j})$. Since $M$ distance-reduces $C(A)$, there is some $m \in M$ which distance-reduces $c$. Referring back to Remark \ref{rem:disjointcomps} again, we let $j'$ be such that $\beta(m) \in \operatorname{Gr}(I_{H_{j'}})$. If $j \neq j'$ then $\operatorname{Supp}(c) \cap \operatorname{Supp}(m)$ is empty, in particular, $\operatorname{Supp}(c^+) \cap \operatorname{Supp}(m^+)$ is empty. However, condition (\ref{item:drbouquet1}) of the distance-reduction condition, Definition \ref{def:drbouquet} tells us that, up to sign, $m^+ \le c^+$ and thus $\operatorname{Supp}(m^+) \subseteq \operatorname{Supp}(c^+)$. But $\operatorname{Supp}(c) \cap \operatorname{Supp}(m)$ is empty so we have a contradiction. Thus, $j=j'$ and $m \in \operatorname{Gr}(A_{H_j})$ so $m \in M_{H_j}$. In particular, $M_{H_j}$ distance-reduces $c$ and, since $c$ was arbitrary, then $M_{H_j}$ distance-reduces $C(A_{H_j})$. Finally, since $M$ does not distance-reduce $z$ then, neither does $M_{H_j}$ since it is a subset of $M$. Thus we have a minimal Markov basis $M_{H_j}$, which distance-reduces the circuits $C(A_{H_j})$ but is not distance-reducing since it fails to distance-reduce $z$. Thus Conjecture \ref{conj: distance reduction for graph ideals by circuits} fails for $H_j$.
\end{proof}



\subsection{Markov bases for toric ideals of graphs}\label{sec: graphs markov bases}



    

 
In this section, we will introduce the background on toric ideals of graphs. For a comprehensive overview of toric ideals of graphs see \cite[Section 2]{Reyes2012minimal}. In the following, we will let $G$ be a connected graph with vertex set $V(G)=\{v_1,\dots ,v_d\}$. However, by Proposition \ref{prop:conjccs}, the main theorem for this section, Theorem \ref{thm:CImain}, also holds for disconnected graphs.

A \textit{walk} is a finite sequence of the form
\begin{align*}
    w=(v_{i_0},e_{j_1},v_{i_1},e_{j_2},v_{i_2},\dots ,v_{i_{q-1}},e_{j_q},v_{i_{q}})
\end{align*}
where $q \in \NN$ and $e_{j_l}=\{v_{i_{l-1}},v_{i_l} \} \in E(G)$ for all $l \in [q]$. The \textit{length} of the walk $w$ is the number $q$ of edges in the walk. An \textit{even}/\textit{odd} walk is a walk of even/odd length. The walk $w$ is closed if $v_{i_0}=v_{i_q}$ and is a \textit{path} if the vertices $v_{i_0},\dots ,v_{i_q}$ are all distinct. A \textit{cycle} is a closed walk in which the vertices $v_{i_0},\dots ,v_{i_{q-1}}$ are distinct and we say that $w$ only visits the vertex $v_{i_0}=v_{i_q}$ once. We let $-w$ denote the reverse walk $(v_{i_{q}},e_{j_q},v_{i_{q-1}},\dots,v_{i_2},e_{j_2},v_{i_1},e_{j_1},v_{i_0} )$. Note that, although the graph $G$ has no multiple edges since it is simple, the same edge $e$ may appear more than once in a walk $w$. In this case, $e$ is called a \textit{multiple edge} of the walk $w$. We will denote the set of vertices of $w$ by $V(w)=\{v_{i_0},\dots v_{i_{q}}\}$ and the set of edges of $w$ by $E(w)=\{e_{j_1},\dots , e_{j_q}\}$. We then let $\mathbf{w}$ denote the \textit{induced graph of $w$} with vertices $V(w)$ and edges $E(w)$. 

Given an even closed walk
$$w=(v_{i_0},e_{j_1},v_{i_1},e_{j_2},v_{i_2},\dots ,v_{i_{2q-1}},e_{j_{2q}},v_{i_0})$$
of the graph $G$, we denote by $E^+(w)$ and $E^-(w)$ the monomials
$$E^+(w)=\prod_{l=1}^q e_{j_{2l-1}}, \quad E^-(w)=\prod_{l=1}^q e_{j_{2l}}.$$
A walk $w'$ is a \textit{subwalk} of $w$ if $E^+(w')E^-(w')| E^+(w)E^-(w)$. Then, we denote by $B_w$ the binomial
$$B_w=E^+(w)-E^-(w)$$
belonging to the toric ideal $I_G$. By  \cite[Proposition 3.1]{Vil_1995}, $I_G$ is generated by binomials of this form and, by \cite[Lemma 2.1]{ohsugi-hibi-quad-binoms}, all elements of $\operatorname{Gr}(I_G)$ are of this form. Note that different walks may correspond to the same binomial, however all these walks give the same induced graph $\mathbf{w}$ so we will identify them. In addition, given a walk $w$, since $B_w \in I_{G}$, then there is $z \in \ker_\ZZ(A_G)$ such that $\beta(z)=B_w$ and we say $z$ corresponds to the walk $w$. We will say that the walk $w$ is primitive if the corresponding element $z \in \ker_\ZZ(A_G)$ lies in the Graver basis $\operatorname{Gr}(A_G)$, which is equivalent to $\beta(z)$ being primitive, see \cite[Chapter 4]{sturmfels1996grobner} and \cite[Lemmas~1.1 and 3.1]{ohsugi-hibi-quad-binoms}. On the other hand, given $z \in \operatorname{Gr}(A_G)$, we can write $\beta(z)=B_w$ for some even closed walk $w$ and $z$ corresponds to the walk $w$. 

In fact, a graph-theoretic classification for the walks corresponding to elements of the Graver basis is given by Reyes, Tatakis and Thoma, which we will restate in Theorem \ref{thm:reyesprimitive}. Furthermore, since $C(A_G) \subseteq \operatorname{Gr}(A_G)$, all circuits correspond to even closed walks on the graph $G$ and a complete classification of circuits in terms of walks on graphs was given by Villarreal in \cite[Proposition 4.2]{Vil_1995}. For completeness we restate this now as Theorem \ref{thm:villarealcircuits}.

\begin{theorem} \cite[Proposition 4.2]{Vil_1995} \label{thm:villarealcircuits}
    For $G$ a simple connected graph, $z \in \operatorname{Gr}(A_G)$ is a circuit if and only if the corresponding even closed walk $w$ is
    \begin{enumerate}
        \item\label{item:circ1} an even cycle,
        \item\label{item:circ2} two odd cycles intersecting at exactly one vertex, or
        \item\label{item:circ3} two vertex disjoint odd cycles joined by a path.
    \end{enumerate}
\end{theorem}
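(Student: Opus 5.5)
The plan is to prove the classification directly from the linear algebra of the incidence matrix, via its column matroid (the signed-graphic or ``even cycle'' matroid). I would then match the resulting edge sets with the walks described in items \ref{item:circ1}--\ref{item:circ3}.

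\textbf{Step 1: reduce to minimal dependent edge sets.} A circuit $z$ is determined up to sign by its support, and the support $S=\operatorname{Supp}(z)\subseteq E(G)$ is a minimally linearly dependent set of columns of $A_G$ over $\QQ$. Let $H$ be the subgraph formed by the edges of $S$ and their endpoints. The standard rank fact I will use is the following: for a connected graph $H$ on $n$ vertices, the columns of $A_H$ have rank $n-1$ if $H$ is bipartite and rank $n$ otherwise. This holds because the left kernel of $A_H$ is spanned by the $\pm1$ bipartition vector in the bipartite case and is zero otherwise. Minimality gives three consequences:
\begin{enumerate}
\item $H$ is connected, since the kernel splits over components.
\item $H$ has no vertex of degree $1$, since the unique edge at a leaf must carry coefficient $0$.
\item The rank of $S$ is $|S|-1$.
\end{enumerate}
So if $H$ is bipartite then $|E(H)|=n$, and if $H$ is non-bipartite then $|E(H)|=n+1$.

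\textbf{Step 2: structural case analysis.} Suppose first that $H$ is bipartite. Then $H$ is connected, unicyclic and has minimum degree $\ge 2$, so $H$ is a single cycle, necessarily even. This gives item \ref{item:circ1}.

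Now suppose $H$ is non-bipartite. Then $H$ is connected with cyclomatic number $2$ and minimum degree $\ge2$, so it is one of three shapes:
\begin{enumerate}
\item a theta graph;
\item two cycles sharing exactly one vertex;
\item two vertex-disjoint cycles joined by a path.
\end{enumerate}
Minimality forbids any even cycle as a proper subset of $S$, because an even cycle is itself dependent. A theta graph with paths of lengths $a,b,c$ has cycles of lengths $a+b$, $b+c$ and $a+c$. These sum to an even number, so at least one of them is even, and the theta case is excluded. In the other two shapes, both cycles must be odd. This gives items \ref{item:circ2} and \ref{item:circ3}.

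\textbf{Step 3: converse and identification with walks.} For each listed configuration I would write down an explicit kernel vector. On an even cycle, use alternating $\pm1$. On two odd cycles sharing a vertex, traverse one cycle and then the other, using alternating $\pm1$ along the combined even closed walk. On two odd cycles joined by a path, traverse the first cycle, the path, the second cycle and the path back; this gives alternating $\pm1$ on the cycle edges and $\pm2$ on the path edges. Each of these vectors is exactly the exponent vector of $B_w$ for the corresponding even closed walk $w$. Its entries have gcd $1$, because some entry equals $\pm1$.

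Support-minimality follows from the rank fact of Step 1. Every proper subset of $S$ spans a subgraph whose components are each either a forest or unicyclic with an odd cycle, and such column sets are independent. Hence the support is a minimal dependent set, the kernel of $A_H$ is one-dimensional, and $z$ is the unique primitive vector on it up to sign, i.e.\ a circuit.

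\textbf{Main obstacle.} I expect the main care to be needed in Step 2: making the degree and cyclomatic argument airtight when producing the exact list of three shapes, and handling the theta-graph parity exclusion cleanly. The rank fact for incidence matrices is routine but should be stated carefully, since everything else rests on it.
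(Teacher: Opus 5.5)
Your argument is correct. The paper has no proof of this statement to compare against: it quotes the result from Villarreal \cite[Proposition 4.2]{Vil_1995} and uses it as a black box.

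What you give is the classical column-matroid derivation; the column matroid of $A_G$ is the signed-graphic matroid of $G$ with every edge negative. Its ingredients are:
\begin{itemize}
\item the left-kernel computation, giving rank $|V(H)|-1$ or $|V(H)|$ according as $H$ is bipartite or not;
\item minimality, which forces $H$ to be connected and leafless with $|S|$ equal to the rank plus one;
\item the parity of the three cycle lengths in a theta graph, which eliminates the only unlisted shape.
\end{itemize}

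The step you flag as delicate closes with a degree count. Minimum degree at least $2$ together with $|E(H)|=|V(H)|+1$ gives $\sum_v(\deg v-2)=2$. So $H$ has either one vertex of degree $4$ or two vertices of degree $3$. Suppressing the degree-$2$ vertices then leaves exactly the two cycles sharing one vertex, the handcuff, or the theta.

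The citation is economical. Your route is self-contained and also shows that the kernel of $A_G$ restricted to a circuit's support is one-dimensional, with explicit entries $\pm1$ on cycle edges and $\pm2$ on path edges. That is the certificate the paper implicitly relies on in the proof of Theorem~\ref{thm:CImain}, where it declares the concrete walks $w_i$ and $w'$ to be circuits; each is two odd cycles with the connecting path traversed out and back.
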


To state Reyes, Tatakis and Thoma's classification of Graver basis elements, we first introduce some more graph-theoretic concepts, see \cite[Section 3]{Reyes2012minimal}.

A \textit{cut edge/vertex} is an edge/vertex of the graph $G$ whose removal increases the number of connected components of the remaining subgraph. A graph is \textit{biconnected} if it is connected and does not contain a cut vertex. A \textit{block} of $G$ is a maximal biconnected subgraph of $G$. For a primitive even closed walk $w$, we note that $E(w)$ is partitioned into those edges which divide $E^+(w)$ and those which divide $E^-(w)$. If an edge $e$ of $w$ divided both $E^+(w)$ and $E^-(w)$ then $B_w$ would not be primitive since $\frac{B_w}{e} \in I_G$. Then, a \textit{sink} of a block $B$ of $\mathbf{w}$ is a common vertex of edges $e,e' \in E(w)$ which both belong to the block $B$ and where $ee'$ divides either $E^+(w)$ or $E^-(w)$. Note that $e$ and $e'$ don't have to be distinct. For example, if $e=\{v_1,v_2\}$ is a cut edge of $w$, then, since $w$ is closed, $e$ appears twice in the walk $w$. Also, since $w$ is primitive then $e^2$ divides either $E^+(w)$ or $E^-(w)$ so $v_1,v_2$ are both sinks.

\begin{theorem} \cite[Theorem 3.1]{Reyes2012minimal} \label{thm:reyesprimitive}
    Let $G$ be a graph and $w$ an even closed walk of $G$. Then, the walk $w$ is primitive if and only if
    \begin{enumerate}
        \item\label{item:prim1} every block of the induced graph $\mathbf{w}$ is a cycle or a cut edge,
        \item\label{item:prim2} every multiple edge of the walk $w$ is a double edge of the walk and a cut edge of $\mathbf{w}$,
        \item\label{item:prim3} every cut vertex of $\mathbf{w}$ belongs to exactly two blocks and is a sink of both.
    \end{enumerate}
\end{theorem}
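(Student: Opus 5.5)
The plan is to translate primitivity into a statement about vectors and then argue both directions by local balance at vertices. Let $z \in \ker_\ZZ(A_G)$ correspond to $w$. The condition $A_G z = 0$ says precisely that at every vertex $v$, the total multiplicity of $E^+(w)$-edges at $v$ equals that of $E^-(w)$-edges at $v$; call this \emph{vertex balance}. By definition, $w$ is primitive iff there is no nonzero $u \in \ker_\ZZ(A_G)$, $u \neq z$, with $u^+ \le z^+$ and $u^- \le z^-$. Every such $u$ is itself balanced at each vertex. So both directions reduce to a combinatorial question: does $w$ admit a balanced proper ``sub-colouring''?

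For sufficiency, assume (\ref{item:prim1})--(\ref{item:prim3}) hold and let $u$ be such a vector with some edge $e \in \operatorname{Supp}(u)$. I would propagate the support of $u$ through $\mathbf{w}$ using the rules below; by connectivity of the block tree of $\mathbf{w}$ this forces $\operatorname{Supp}(u)=\operatorname{Supp}(z)$.
\begin{itemize}
\item At a vertex of a cycle block that is not a cut vertex, there are exactly two incident edges of $w$ and they have opposite signs. Balance then forces the neighbouring edge into $\operatorname{Supp}(u)$ with the same value.
\item At a cut vertex, by (\ref{item:prim3}) it lies in exactly two blocks and is a sink of both. Its two edges in one block therefore have the same sign, and balance transfers a nonzero total into the other block.
\item Cut edges are double edges of the walk by (\ref{item:prim2}), and are handled the same way.
\end{itemize}
Tracking values rather than just supports, each cycle block should carry a constant value of $|u|$ on its edges. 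Cut edges should carry twice the value seen by a single adjacent cycle edge. This propagation gives $u = cz$ for some rational $c$, and $0 < c \le 1$ together with integrality of $u$ forces $c=1$ (using that the entries of $z$ on cycle edges equal $1$). Hence $u = z$ and $w$ is primitive.

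For necessity, assume $w$ is primitive and show each condition in turn by producing a proper balanced subwalk whenever it fails.
\begin{itemize}
\item For (\ref{item:prim2}): an edge occurring in both $E^+(w)$ and $E^-(w)$ can be cancelled, as already noted before the theorem. An edge occurring with multiplicity at least $2$ on one side but which is not a cut edge lies on a cycle of $\mathbf{w}$. Splitting $w$ at the two occurrences then yields two closed subwalks, one of which is even and respects the colouring.
\item For (\ref{item:prim3}): at a cut vertex that lies in three or more blocks, or that is not a sink of some block, split $w$ into the closed subwalks returning to that vertex. Balance at the vertex then gives a union of these pieces forming an even closed walk whose signs agree with those of $w$.
\end{itemize}

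The main obstacle I expect is (\ref{item:prim1}): showing that a block of $\mathbf{w}$ which is biconnected but not a single cycle yields a proper primitive subwalk. Here I would use an ear decomposition of the block. Take a cycle $C$ in the block and an ear $P$ attached at two vertices $x,y$ of $C$; this produces three internally disjoint $x$--$y$ paths. The walk $w$ traverses these paths with signs inherited from the alternation of $w$. I would then consider three cases according to the parities of the three paths and the sign pattern at $x$ and $y$. The aim is to find two of the paths forming a cycle that is even with alternating signs matching $w$. If no such pair exists, I would instead combine two odd cycles through the rest of $w$ to obtain a circuit-type subwalk (Theorem~\ref{thm:villarealcircuits}) dominated by $z$. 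I expect the careful sign bookkeeping in this case analysis to be the hardest and longest part of the argument.
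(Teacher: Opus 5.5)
The paper does not prove this statement (it is quoted from \cite{Reyes2012minimal}), so your argument has to stand on its own. As written it has two genuine gaps.

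\emph{Sufficiency.} Your rules only show that $u_e/z_e$ is constant along each arc of a cycle block between consecutive cut vertices. At a cut vertex $v$, balance only relates the \emph{sum} of $u$ over the edges of one block at $v$ to the sum over the other block. Your rules use nothing but balance and signs, so they apply equally to rational $u$, and there the conclusion ``$u=cz$'' is false. In Example~\ref{ex: CI graph}, $w_4$ satisfies (\ref{item:prim1})--(\ref{item:prim3}) and $2z_4=z_2+z_3$. Hence $u=\tfrac12 z_2$ is balanced with $0\le u_e/(z_4)_e\le 1$ on every edge, yet on the block $\{4,5,6,7\}$ it agrees with $z_4$ on $e_{47}$ and vanishes on $e_{45},e_{56},e_{67}$. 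Integrality therefore has to be used before proportionality, not just at the end. For instance, let $H$ be one of the two branches of $\mathbf{w}$ at a cut vertex $v$. Summing $(A_Gu)_x=0$ over $x\in V(H)\setminus\{v\}$ gives
\[
\sum_{e\in E(H),\,v\in e}u_e=-2\sum_{e\in E(H),\,v\notin e}u_e,
\]
so this flow is even. Since $u$ is integral, $|u_e|\le|z_e|$, and the block's edges at $v$ share a sign (sink condition), the flow is $0$ or $\pm2$. This forces a common ratio $u_e/z_e\in\{0,1\}$ on those edges (for a cut edge $f$ it excludes $|u_f|=1$). Together with your rule at non-cut vertices, the ratio is then constant on $\mathbf{w}$, so $u\in\{0,z\}$.

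\emph{Necessity.} Splitting $w$ at two same-signed occurrences of $e=\{a,b\}$ gives even pieces only when they are traversed in the same direction. In opposite directions, $w=(a,e,b)\,P\,(b,e,a)\,Q$ with $P,Q$ closed walks of odd length, so every piece is odd. This is precisely the case in which the hypothesis that $e$ is not a cut edge must be used, and your sketch never uses it.

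Splitting at repeated vertices does not rescue it either. Take $w=(1,2,3,4,2,1,4,5,1)$, with $E^+(w)=e_{12}^2e_{34}e_{45}$ and $E^-(w)=e_{23}e_{24}e_{14}e_{15}$. Cutting $w$ at any two visits to a vertex gives closed walks of odd length. Yet $w$ is not primitive: the $4$-cycle $(2,3,4,1,2)$ gives $e_{12}e_{34}-e_{23}e_{14}$, with $e_{12}e_{34}\mid E^+(w)$ and $e_{23}e_{14}\mid E^-(w)$. This cycle runs through the segment $(1,4)$ of $w$ backwards.

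The missing idea is a rerouting step. Pick $c\in V(P)\cap V(Q)$, which exists since $\mathbf{w}$ minus $e$ is connected. Write $P=P_1P_2$ and $Q=Q_1Q_2$ through $c$. Then check that if cutting at $c$ gives odd pieces, $P_1Q_1^{-1}e$ is an even closed walk whose signs agree with those of $w$.

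The same phenomenon is why your plan for (\ref{item:prim1}) is not yet an argument. The needed subwalk need not be a union of segments of $w$. Your fallback, a circuit of type (\ref{item:circ3}), doubles its connecting path, which $z$ cannot dominate inside a block whose edges are traversed once by (\ref{item:prim2}). Your argument for (\ref{item:prim3}) does go through, via a parity count of the pieces at the cut vertex.
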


The primitivity of an even closed walk $w$ of $G$ restricts the amount of times each vertex of $G$ can be visited by $w$. We will need this result for Theorem \ref{thm:CImain} so we provide a proof.

\begin{lemma} \label{lem:vertex<3}
    Let $G$ be a graph and $w$ a primitive even closed walk of $G$. Then, each vertex of $G$ is visited at most twice by $w$ and exactly twice if and only if it is a cut vertex of $\mathbf{w}$.
\end{lemma}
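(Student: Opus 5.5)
The plan is to read the statement off the structure of $\mathbf{w}$ given by Theorem~\ref{thm:reyesprimitive}. First fix conventions. A closed walk $w=(v_{i_0},e_{j_1},\dots,e_{j_{2q}},v_{i_0})$ is regarded cyclically, and the number of visits to a vertex $v$ is the number of indices $l\in\{0,\dots,2q-1\}$ with $v_{i_l}=v$. Each visit uses two edge-incidences at $v$: the edge entering and the edge leaving. So the number of visits equals half the degree of $v$ in the multigraph traced by $w$. In that multigraph each edge is counted with its multiplicity in $E^+(w)E^-(w)$.

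By condition~(\ref{item:prim2}), every multiple edge is a double edge and a cut edge of $\mathbf{w}$, and all other edges appear once. By condition~(\ref{item:prim1}), every block of $\mathbf{w}$ is a cycle or a cut edge. For a vertex $v$ lying in a block $B$, the contribution of $B$ to the walk-degree of $v$ is therefore exactly $2$. If $B$ is a cycle, $v$ has two incident edges in $B$, each traversed once. They are traversed once because an edge in a cycle block is not a cut edge, so it cannot be a multiple edge. If $B$ is a cut edge, its single edge is traversed twice. Hence the walk-degree of $v$ is $2b(v)$, where $b(v)$ is the number of blocks of $\mathbf{w}$ containing $v$, and $v$ is visited exactly $b(v)$ times.

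It remains to bound $b(v)$. If $v$ is not a cut vertex of $\mathbf{w}$, it lies in exactly one block, since $\mathbf{w}$ is connected, being induced by a closed walk. So it is visited once, or zero times if $v\notin V(w)$. If $v$ is a cut vertex, condition~(\ref{item:prim3}) says it belongs to exactly two blocks, so it is visited exactly twice. This gives both claims: at most two visits, and exactly two if and only if $v$ is a cut vertex.

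The main point needing care is the claim that a cut-edge block contributes walk-degree $2$ at its endpoints. This requires that a cut edge of $\mathbf{w}$ is traversed exactly twice. It is traversed at least twice because the walk is closed and must return across the cut. It is traversed at most twice by condition~(\ref{item:prim2}). A second point is to make the visit-counting convention precise for the base vertex $v_{i_0}=v_{i_{2q}}$, which is counted once; the cyclic convention handles this.
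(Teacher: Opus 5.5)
Your proof is correct, and it takes a different route from the paper's.

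The paper argues in separate steps:
\begin{itemize}
\item It first shows that a non-cut vertex is visited once, since its unique block must be a cycle whose edges are each traversed once.
\item It then bounds the number of visits by contradiction. Using the \emph{sink} part of condition~(\ref{item:prim3}), every visit to a cut vertex $v$ switches between its two blocks $B$ and $B'$. So three visits would use edges of $B$ at $v$ at least three times. This forces three distinct such edges, contradicting condition~(\ref{item:prim1}).
\item The converse direction (a cut vertex is visited at least twice) again uses this block-switching.
\end{itemize}

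You instead prove an exact identity: the number of visits to $v$ equals the number $b(v)$ of blocks of $\mathbf{w}$ containing $v$. You get this from three ingredients:
\begin{itemize}
\item the handshake count, under which visits are half the walk-degree (valid because $G$ has no loops);
\item conditions~(\ref{item:prim1}) and~(\ref{item:prim2});
\item the parity argument that a cut edge is crossed an even number of times, hence exactly twice.
\end{itemize}
Condition~(\ref{item:prim3}) is then needed only to say that a cut vertex lies in exactly two blocks.

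What your route buys:
\begin{itemize}
\item It gives a sharper statement and both directions of the equivalence at once.
\item It never uses the sink property.
\item It treats a cut-edge block at a non-cut vertex uniformly. The paper's first step tacitly assumes that both endpoints of a cut-edge block are cut vertices. That holds for primitive walks, since a pendant edge would divide both $E^+(w)$ and $E^-(w)$, but the paper does not justify it.
\end{itemize}

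In exchange, the paper's argument works directly with the block-switching behaviour at sinks. That same mechanism is used later in the proof of Theorem~\ref{thm:CImain}.
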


\begin{proof}
    Since $w$ is primitive, we can use conditions (\ref{item:prim1}), (\ref{item:prim2}) and (\ref{item:prim3}) of Theorem \ref{thm:reyesprimitive}. 

    We first note that if a vertex $v$ in the walk $w$ is not a cut vertex of $\mathbf{w}$, then it is only visited once by $w$. To see this, we notice that $v$ only appears in one block $B$ of $w$. Since both vertices in a block which is a cut edge are cut vertices then, by condition (\ref{item:prim1}), $B$ is a cycle. In addition, $w$ only traverses edges in $B$ once otherwise, by condition (\ref{item:prim2}), the edge would be a cut edge and be its own block. Thus, since $v$ only appears in the block $B$, $v$ is only visited once by $w$.
    
    We now prove the first part of the statement. Suppose there is a vertex of $G$, $v$, which is visited at least $3$ times by $w$. By our previous observation then $v$ is a cut vertex of $\mathbf{w}$. Thus, by condition (\ref{item:prim3}), $v$ belongs to exactly $2$ blocks of $w$, say $B$ and $B'$, and is a sink of both. Since $v$ is a sink, any time $w$ visits $v$, it switches the block it is travelling through. Consider the set of edges of $B$ adjacent to $v$. The number of times each is visited by $w$ must add up to at least $3$. If one of them is a double edge of $w$ then, by condition (\ref{item:prim2}), the block $B$ is that cut edge in which case this sum would be $2$. By condition (\ref{item:prim2}), each edge of $w$ is traversed at most twice, so each edge of $B$ adjacent to $v$ is only traversed at most once by $w$. Thus, there must be at least $3$ edges of $B$ which are adjacent to $v$. This contradicts condition (\ref{item:prim1}) since then $B$ is not a cycle or a cut edge. Thus, any vertex of $G$ is visited at most twice by $w$.

   We now prove the second part of the statement. The ($\Rightarrow$) direction follows by our observation that, if $v$ is not a cut vertex of $\mathbf{w}$ then $v$ is only visited once by $w$. For the ($\Leftarrow$) direction, if $v$ is a cut vertex, then, as noted before, every time $w$ visits $v$, it switches the block it is travelling through. Since $w$ is a closed walk, then $w$ must visit $v$ at least twice to get back to where the walk started.
\end{proof}

As in \cite[Section 4]{tatakis2011completeintersectiontoricideals}, we define the \textit{block tree} of $G$, $B(G)$, which is the bipartition $(\mathbb{B},\mathbb{S})$ where $\mathbb{B}$ is the set of blocks of $G$ and $\mathbb{S}$ is the set of cut vertices of $G$. Then, $\{B,v\}$ is an edge of $B(G)$ if and only if $v \in B$. The leaves of the block tree and are always blocks and are called \textit{end blocks}. We note that, since end blocks are leaves, they contain unique cut vertices.

We recall that, given a matrix $A \in \ZZ^{d \times n}$, the toric ideal $I_A$ is a \textit{complete intersection toric ideal} if the number of minimal generators of $I_A$ is equal to the height of the toric ideal ${\rm ht}(I_A) = n - {\rm rk}(A)$. 

Tatakis and Thoma give a graph-theoretic classification of complete intersection toric ideals of graphs in \cite{tatakis2011completeintersectiontoricideals}. We now restate one of their key results about the minimal generators of such ideals which is the crux of the proof of Theorem \ref{thm:CImain}.

\begin{theorem} \cite[Theorem 5.4]{tatakis2011completeintersectiontoricideals} \label{thm:tatakisCI}
    Let $G$ be a simple connected graph where $I_G$ is a complete intersection. Then, all minimal generators of $I_G$ except at most one correspond to a walk which is an even cycle.
\end{theorem}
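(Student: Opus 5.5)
The plan is to combine the numerology of a complete intersection with the walk description of minimal generators from Theorem~\ref{thm:reyesprimitive}. First, for connected $G$ with $n=|E(G)|$ and $d=|V(G)|$, we have $\operatorname{rk}(A_G)=d-1$ if $G$ is bipartite and $\operatorname{rk}(A_G)=d$ otherwise. Hence ${\rm ht}(I_G)=n-d+1$ or $n-d$, and this number is exactly the count of minimal generators when $I_G$ is a complete intersection. Since every minimal Markov basis spans $\ker_\ZZ(A_G)$ over $\QQ$, in the complete intersection case the vectors $z_1,\dots,z_h$ of a minimal Markov basis (with $h={\rm ht}(I_G)$) are linearly independent. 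In the bipartite case every even closed walk decomposes into even cycles, and a minimal generator, being primitive with no proper semi-conformal reduction in lower degree, must itself be an even cycle. So I would reduce immediately to $G$ non-bipartite.

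In the non-bipartite case, each minimal generator $w$ is a primitive even closed walk. By Theorem~\ref{thm:reyesprimitive}, $w$ is either an even cycle or its induced graph $\mathbf{w}$ contains odd cycles, since the end blocks of $\mathbf{w}$ must be odd cycles once $\mathbf{w}$ is not a single cycle. Suppose, for contradiction, that two minimal generators $w_1,w_2$ are not even cycles, and choose odd cycles $C_1\subseteq\mathbf{w}_1$ and $C_2\subseteq\mathbf{w}_2$. Using connectivity of $G$, I would form the even closed walk $w'$ consisting of $C_1$, $C_2$ and a shortest connecting path, or the two cycles glued at a common vertex. This $w'$ is a circuit of type \ref{item:circ2} or \ref{item:circ3} in Theorem~\ref{thm:villarealcircuits}.

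Next I would write the vectors $z_{w_1},z_{w_2}$ and $z_{w'}$ in terms of the cycle space. The idea is to use the $\ZZ/2$-valued parity map on the cycle space together with the $\QQ$-basis property. The cycle space of $G$ has dimension $n-d+1$, which is one more than $h$. Assigning to each element of $\ker_\ZZ(A_G)$ its underlying edge multiset modulo $2$, the even cycles map into the even-cycle subspace of codimension one. A non-cycle generator, by contrast, reduces modulo $2$ to the sum of its two odd end cycles, giving a vector that is the sum of two classes from the odd coset. I would then aim to show that the $h$ images, with at most one generator's image allowed to leave the span of the even-cycle images, must be independent in a space of dimension exactly $n-d$. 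The goal is to deduce that two non-cycle generators produce a dependency modulo $2$ that lifts, via the degree-minimality of minimal generators, to a genuine $\QQ$-dependency. Equivalently, one of $B_{w_1},B_{w_2}$ would lie in the ideal generated by lower-degree binomials, namely the even cycles inside $w_1\cup w_2$ together with the circuit $w'$.

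I expect this last lifting step to be the main obstacle: a mod-$2$ dependency does not automatically give a redundancy of a minimal generator. The first thing I would try is an induction on the block tree $B(G)$. Remove an end block that is not needed, or contract along a cut vertex, and use that the complete intersection property passes to the relevant subgraphs via gluing of semigroups. This should localize all odd-cycle generators to a single pair of odd cycles, which then forces at most one non-cycle generator.
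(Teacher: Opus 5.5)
\textbf{There is a genuine gap.} The paper gives no proof of this statement; it quotes it from Tatakis--Thoma, where it comes out of their graph-theoretic classification of complete intersection toric ideals of graphs. So your proposal has to stand on its own.

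\textbf{What works and what is missing.} The rank count and the bipartite case are fine. A primitive walk with at least two blocks has odd end cycles by Theorem~\ref{thm:reyesprimitive}, so in a bipartite graph every minimal generator is an even cycle. The non-bipartite case, however, is the whole content of the theorem, and there you never prove that two non-cycle minimal generators are incompatible with $I_G$ being a complete intersection. You set up a mod-$2$ count, flag the lifting step as the obstacle, and that step cannot be filled in the form you describe.

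\textbf{The parity map does not separate the two kinds of generators.} Since $\mathbf{1}_n=\tfrac12\mathbf{1}_dA_G$, every $z\in\ker_\ZZ(A_G)$ satisfies $\sum_e z_e=0$. Hence every kernel element, even cycle or not, reduces into the same subspace of even-size elements of the $\mathbb{F}_2$ cycle space; a sum of two odd-coset classes lands there too. There are also smaller inaccuracies. A non-cycle primitive walk reduces mod $2$ to the sum of \emph{all} cycle blocks of $\mathbf w$, which can include internal cycles and more than two odd cycles. Your circuit $w'$ need not exist if $C_1=C_2$ or the two odd cycles share several vertices.

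\textbf{No argument from your ingredients alone can give the bound.} You only use that the generators are $h$ linearly independent primitive-walk vectors. Take three triangles sharing a single vertex: $h=9-7=2$. Two of the three circuits formed by pairs of triangles give a basis of $\ker_\QQ(A_G)$ that is also independent modulo $2$, so your setup yields no contradiction. What fails there is ideal-theoretic: each of the three such binomials has a two-element fiber, hence is indispensable, so $I_G$ needs $3>h$ generators.

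\textbf{Rank counting points the wrong way.} Using ear decompositions one checks that the even cycles span a subspace of $\ker_\QQ(A_G)$ of codimension $k-1$, where $k$ is the number of non-bipartite blocks. Linear algebra therefore only gives a \emph{lower} bound of $k-1$ non-cycle generators. The theorem consequently contains the structural fact that the complete intersection property forces $k\le 2$.

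\textbf{What a proof needs.} It must use generation of $I_G$ itself: show that two non-cycle minimal generators force further minimal generators, so that $\mu(I_G)>\operatorname{ht}(I_G)$. Possible tools are the Reyes--Tatakis--Thoma description of minimal generators, or a gluing decomposition of the semigroup. Your last paragraph names such a block-tree induction, but it does not establish that the complete intersection property passes to the pieces, nor that the odd-cycle generators localize.
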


We present  below an example of a complete intersection toric ideal of a graph and discuss its distance reduction properties, which will help with gaining intuition and clarifying the tools used in the proof of Theorem~\ref{thm:CImain}.

\begin{example}\label{ex: CI graph}
    We present an example of a graph which has a complete intersection toric ideal. We denote the graph $G_1$ and visualise it in Figure \ref{fig:mainG1}.
    \begin{figure}[t]
    \begin{tikzpicture} [scale=1]
	\begin{pgfonlayer}{nodelayer}
        \node[shape=circle,draw=black] (1) at (-1.7,2) {$1$};
        \node[shape=circle,draw=black] (2) at (-1.7,0) {$2$};
        \node[shape=circle,draw=black] (3) at (0,1) {$3$};
        \node[shape=circle,draw=black] (4) at (2,1) {$4$};
        \node[shape=circle,draw=black] (5) at (2,3) {$5$};
        \node[shape=circle,draw=black] (6) at (4,3) {$6$};
        \node[shape=circle,draw=black] (7) at (4,1) {$7$};
        \node[shape=circle,draw=black] (8) at (6,1) {$8$};
        \node[shape=circle,draw=black] (9) at (7.7,2) {$9$};
        \node[shape=circle,draw=black] (0) at (7.7,0) {$0$};
	\end{pgfonlayer}
	\begin{pgfonlayer}{edgelayer}
		\draw (1) to (2);
        \draw (1) to (3);
        \draw (2) to (3);
        \draw (3) to (4);
        \draw (4) to (5);
        \draw (5) to (6);
        \draw (6) to (7);
        \draw (4) to (7);
        \draw (7) to (8);
        \draw (8) to (9);
        \draw (9) to (0);
        \draw (8) to (0);
	\end{pgfonlayer}
    \end{tikzpicture}
    \centering
    \caption{The graph $G_1$ in Example~\ref{ex: CI graph}.} \label{fig:mainG1}
\end{figure}

The corresponding incidence matrix of $G_1$ is given by
$$A_{G_1}=\begin{bmatrix}
1 & 1 & 0 & 0 & 0 & 0 & 0 & 0 & 0 & 0 & 0 & 0 \\
1 & 0 & 1 & 0 & 0 & 0 & 0 & 0 & 0 & 0 & 0 & 0 \\
0 & 1 & 1 & 1 & 0 & 0 & 0 & 0 & 0 & 0 & 0 & 0 \\
0 & 0 & 0 & 1 & 1 & 1 & 0 & 0 & 0 & 0 & 0 & 0 \\
0 & 0 & 0 & 0 & 1 & 0 & 1 & 0 & 0 & 0 & 0 & 0 \\
0 & 0 & 0 & 0 & 0 & 0 & 1 & 1 & 0 & 0 & 0 & 0 \\
0 & 0 & 0 & 0 & 0 & 1 & 0 & 1 & 1 & 0 & 0 & 0 \\
0 & 0 & 0 & 0 & 0 & 0 & 0 & 0 & 1 & 1 & 1 & 0 \\
0 & 0 & 0 & 0 & 0 & 0 & 0 & 0 & 0 & 1 & 0 & 1 \\
0 & 0 & 0 & 0 & 0 & 0 & 0 & 0 & 0 & 0 & 1 & 1 
\end{bmatrix} \in \ZZ^{10 \times 12}.$$

Using the \textit{AllMarkovBases} package \cite{CM_2025} for Macaulay2 \cite{M2}, we compute that $A_{G_1}$ has a unique minimal Markov basis, given by
$$M=\begin{bmatrix}z_1\\z_2\end{bmatrix}=\begin{bmatrix}
    0&0&0&0&1&-1&-1&1&0&0&0&0\\
    1&-1&-1&2&0&-2&0&0&2&-1&-1&1
\end{bmatrix}.$$
Since $n=12$ and ${\rm rk(A_{G_1})}=10$  then ${\rm ht}(I_{A_{G_1}})=n-{\rm rk}(A_{G_1})=2$. Thus, since $M$ has two elements then $I_{G_1}$ is a complete intersection toric ideal. We note that we could also have used \cite[Theorem 5.5]{tatakis2011completeintersectiontoricideals} which gives necessary and sufficient conditions for $I_{G_1}$ to be complete intersection.

We denote an edge $e=\{i,j\}$ of $G_1$ as $e_{ij}$ and so we miss out the edges when writing down walks around $G_1$. Then $\beta(z_1)=e_{45}e_{67}-e_{47}e_{56}=B_{w_1}$ where we define the walk $w_1:=(4,5,6,7,4)$. Similarly, $\beta(z_2)=e_{12}e_{34}^2e_{78}^2e_{90}-e_{13}e_{23}e_{47}^2e_{89}e_{80}=B_{w_2}$ where we define the walk $w_2:=(1,2,3,4,7,8,9,0,8,7,4,3,1)$.

The graphs $\mathbf{w_1}$ and $\mathbf{w_2}$ are visualised in Figure \ref{fig:w1w2}. We notice that $w_1$ is an even cycle and $w_2$ is two vertex disjoint odd cycles, $(1,2,3,1)$ and $(8,9,0,8)$, joined by the path $(3,4,7,8)$. Thus, by Theorem \ref{thm:villarealcircuits}, $z_1$ and $z_2$ are circuits of $A_{G_1}$ and, since every minimal generator of $I_{G_1}$ is a circuit, then $I_{G_1}$ is a circuit ideal. This agrees with \cite[Theorem 5.1]{tatakis2011completeintersectiontoricideals} according to which every complete intersection toric ideal coming from a graph is a circuit ideal. We also note that only one element of $M$ does not correspond to an even cycle so our analysis aligns with Theorem \ref{thm:tatakisCI} as well.

\begin{figure}
    \subfloat[\centering $\mathbf{w_1}$]{{
    \begin{tikzpicture} [scale=1]
	\begin{pgfonlayer}{nodelayer}
        \node[shape=circle,draw=black] (4) at (2,1) {$4$};
        \node[shape=circle,draw=black] (5) at (2,3) {$5$};
        \node[shape=circle,draw=black] (6) at (4,3) {$6$};
        \node[shape=circle,draw=black] (7) at (4,1) {$7$};
	\end{pgfonlayer}
	\begin{pgfonlayer}{edgelayer}
        \draw (4) to (5);
        \draw (5) to (6);
        \draw (6) to (7);
        \draw (4) to (7);
	\end{pgfonlayer}
    \end{tikzpicture}
    }}%
    \hspace{1cm}
    \subfloat[\centering $\mathbf{w_2}$]{{
    \begin{tikzpicture} [scale=0.8]
	\begin{pgfonlayer}{nodelayer}
        \node[shape=circle,draw=black] (1) at (-1.7,2) {$1$};
        \node[shape=circle,draw=black] (2) at (-1.7,0) {$2$};
        \node[shape=circle,draw=black] (3) at (0,1) {$3$};
        \node[shape=circle,draw=black] (4) at (2,1) {$4$};
        \node[shape=circle,draw=black] (7) at (4,1) {$7$};
        \node[shape=circle,draw=black] (8) at (6,1) {$8$};
        \node[shape=circle,draw=black] (9) at (7.7,2) {$9$};
        \node[shape=circle,draw=black] (0) at (7.7,0) {$0$};
	\end{pgfonlayer}
	\begin{pgfonlayer}{edgelayer}
		\draw (1) to (2);
        \draw (1) to (3);
        \draw (2) to (3);
        \draw[red] (3) to (4);
        \draw[red] (4) to (7);
        \draw[red] (7) to (8);
        \draw (8) to (9);
        \draw (9) to (0);
        \draw (8) to (0);
	\end{pgfonlayer}
    \end{tikzpicture}
    }}%
    \centering
    \caption{The graphs $\mathbf{w_1},\mathbf{w_2}$ corresponding to walks $w_1,w_2$ respectively. Double edges of the walks are given by red edges.} \label{fig:w1w2}
\end{figure}

Finally, we look at the Graver basis of $A_{G_1}$, which we represent as a matrix with rows given by its elements:

$$\operatorname{Gr}(A_{G_1})=\begin{bmatrix}z_1\\z_2\\z_3\\z_4\end{bmatrix}=\begin{bmatrix}
    0&0&0&0&1&-1&-1&1&0&0&0&0\\
    1&-1&-1&2&0&-2&0&0&2&-1&-1&1\\
    1&-1&-1&2&-2&0&2&-2&2&-1&-1&1\\
    1&-1&-1&2&-1&-1&1&-1&2&-1&-1&1
\end{bmatrix}.$$

\begin{figure}
    \subfloat[\centering $\mathbf{w_3}$]{{
    \begin{tikzpicture} [scale=0.7]
	\begin{pgfonlayer}{nodelayer}
        \node[shape=circle,draw=black] (1) at (-1.7,2) {$1$};
        \node[shape=circle,draw=black] (2) at (-1.7,0) {$2$};
        \node[shape=circle,draw=black] (3) at (0,1) {$3$};
        \node[shape=circle,draw=black] (4) at (2,1) {$4$};
        \node[shape=circle,draw=black] (5) at (2,3) {$5$};
        \node[shape=circle,draw=black] (6) at (4,3) {$6$};
        \node[shape=circle,draw=black] (7) at (4,1) {$7$};
        \node[shape=circle,draw=black] (8) at (6,1) {$8$};
        \node[shape=circle,draw=black] (9) at (7.7,2) {$9$};
        \node[shape=circle,draw=black] (0) at (7.7,0) {$0$};
	\end{pgfonlayer}
	\begin{pgfonlayer}{edgelayer}
		\draw (1) to (2);
        \draw (1) to (3);
        \draw (2) to (3);
        \draw[red] (3) to (4);
        \draw[red] (4) to (5);
        \draw[red] (5) to (6);
        \draw[red] (6) to (7);
        \draw[red] (7) to (8);
        \draw (8) to (9);
        \draw (9) to (0);
        \draw (8) to (0);
	\end{pgfonlayer}
    \end{tikzpicture}
    }}%
    \hspace{0.5cm}
    \subfloat[\centering $\mathbf{w_4}$]{{
    \begin{tikzpicture} [scale=0.7]
	\begin{pgfonlayer}{nodelayer}
        \node[shape=circle,draw=black] (1) at (-1.7,2) {$1$};
        \node[shape=circle,draw=black] (2) at (-1.7,0) {$2$};
        \node[shape=circle,draw=black] (3) at (0,1) {$3$};
        \node[shape=circle,draw=black] (4) at (2,1) {$4$};
        \node[shape=circle,draw=black] (5) at (2,3) {$5$};
        \node[shape=circle,draw=black] (6) at (4,3) {$6$};
        \node[shape=circle,draw=black] (7) at (4,1) {$7$};
        \node[shape=circle,draw=black] (8) at (6,1) {$8$};
        \node[shape=circle,draw=black] (9) at (7.7,2) {$9$};
        \node[shape=circle,draw=black] (0) at (7.7,0) {$0$};
	\end{pgfonlayer}
	\begin{pgfonlayer}{edgelayer}
		\draw (1) to (2);
        \draw (1) to (3);
        \draw (2) to (3);
        \draw[red] (3) to (4);
        \draw (4) to (5);
        \draw (5) to (6);
        \draw (6) to (7);
        \draw (4) to (7);
        \draw[red] (7) to (8);
        \draw (8) to (9);
        \draw (9) to (0);
        \draw (8) to (0);
	\end{pgfonlayer}
    \end{tikzpicture}
    }}%
    \centering
    \caption{The graphs $\mathbf{w_3},\mathbf{w_4}$ corresponding to walks $w_3,w_4$ respectively. Double edges of the walks are given by red edges.} \label{fig:w3w4}
\end{figure}

We then have $\beta(z_3)=e_{12}e_{34}^2e_{56}^2e_{78}^2e_{90}-e_{13}e_{23}e_{45}^2e_{67}^2e_{89}e_{80}=B_{w_3}$ 
where we define $w_3:=(1,2,3,4,5,6,7,8,9,0,8,7,6,5,4,3,1)$ and $\beta(z_4)=e_{12}e_{34}^2e_{56}e_{78}^2e_{90}-e_{13}e_{23}e_{45}e_{47}e_{67}e_{89}e_{80}=B_{w_4}$ 
where $w_4:=(1,2,3,4,5,6,7,8,9,0,8,7,4,3,1)$. The graphs $\mathbf{w_3}$ and $\mathbf{w_4}$ are visualised in Figure \ref{fig:w3w4}. As with $w_2$, the walk $w_3$ is made up of two vertex disjoint odd cycles joined by a path so $z_3$ is a circuit by Theorem \ref{thm:villarealcircuits}. However, $w_4$ does not fulfil any of the parts of Theorem \ref{thm:villarealcircuits} and thus $z_4$ is not a circuit.

Finally, we can confirm that $w_1,w_2,w_3,w_4$ all fulfil all three conditions of Theorem \ref{thm:reyesprimitive}. We do this explicitly for $w_4$. The blocks of $w_4$ are given by the induced graphs on the vertices $\{1,2,3\}$, $\{3,4\}$, $\{4,5,6,7\}$, $\{7,8\}$ and $\{8,9,0\}$ and we label these $B_1,B_2,B_3,B_4,B_5$ respectively. Condition (\ref{item:prim1}) is fulfilled since the blocks $B_1,B_3,B_5$ are cycles and $B_2,B_4$ are cut edges. Condition (\ref{item:prim2}) is fulfilled since the only multiple edges are $e_{34}$ and $e_{78}$ which are double edges and these are also cut edges of $\mathbf{w_4}$. For condition (\ref{item:prim3}), we mark the edges of $\mathbf{w_4}$ with pluses and minuses depending on whether that edge appears in $E^+(w_4)$ or $E^-(w_4)$ as in Figure \ref{fig:w4pm}. From this we can see that the cut vertices of $\mathbf{w_4}$ which are $3,4,7$ and $8$ belong to exactly two blocks each. Furthermore, edges in the same block, which are adjacent to these vertices, have the same sign in Figure \ref{fig:w4pm} and thus $3,4,7$ and $8$ are all sinks.

\begin{figure}[h] 
    \begin{tikzpicture} [scale=1]
	\begin{pgfonlayer}{nodelayer}
        \node[shape=circle,draw=black] (1) at (-1.7,2) {$1$};
        \node[shape=circle,draw=black] (2) at (-1.7,0) {$2$};
        \node[shape=circle,draw=black] (3) at (0,1) {$3$};
        \node[shape=circle,draw=black] (4) at (2,1) {$4$};
        \node[shape=circle,draw=black] (5) at (2,3) {$5$};
        \node[shape=circle,draw=black] (6) at (4,3) {$6$};
        \node[shape=circle,draw=black] (7) at (4,1) {$7$};
        \node[shape=circle,draw=black] (8) at (6,1) {$8$};
        \node[shape=circle,draw=black] (9) at (7.7,2) {$9$};
        \node[shape=circle,draw=black] (0) at (7.7,0) {$0$};
        \node[] at (-2.1,1) {\large $+$};
        \node[] at (-0.7,1.75) {\large $-$};
        \node[] at (-0.7,0.25) {\large $-$};
        \node[] at (1,1.3) {\large \textcolor{red}{$+$}};
        \node[] at (1.7,2) {\large $-$};
        \node[] at (3,3.3) {\large $+$};
        \node[] at (3,0.7) {\large $-$};
        \node[] at (4.3,2) {\large $-$};
        \node[] at (5,1.3) {\large \textcolor{red}{$+$}};
        \node[] at (6.7,1.7) {\large $-$};
        \node[] at (6.7,0.3) {\large $-$};
        \node[] at (8.1,1) {\large $+$};
	\end{pgfonlayer}
	\begin{pgfonlayer}{edgelayer}
		\draw (1) to (2);
        \draw (1) to (3);
        \draw (2) to (3);
        \draw[red] (3) to (4);
        \draw (4) to (5);
        \draw (5) to (6);
        \draw (6) to (7);
        \draw (4) to (7);
        \draw[red] (7) to (8);
        \draw (8) to (9);
        \draw (9) to (0);
        \draw (8) to (0);
	\end{pgfonlayer}
    \end{tikzpicture}
    \centering
    \caption{The graph $\mathbf{w_4}$ marked with pluses and minuses corresponding to whether that edge divides $E^+(w_4)$ or $E^-(w_4)$. Red edges still correspond to double edges which both have the same sign so we only mark these in red as well.} \label{fig:w4pm}
\end{figure}

Finally, we ask about the distance-reducing properties of $M$. Since $z_1,z_2 \in M$ then $M$ trivially distance-reduces $z_1$ and $z_2$. In addition, $z_3$ and $z_4$ are both distance-reduced by $z_1$ and $z_2$ and we can use Proposition \ref{prop:drredef} to confirm this. In the case of $z_3$, we have $z_1^+ \le z_3^-$ and $\operatorname{Supp}(z_1^-) \cap \operatorname{Supp}(z_3^+)=\{e_{56}\}$ as well as $z_2^+ \le z_3^+$ and $\operatorname{Supp}(z_2^-)\cap \operatorname{Supp}(z_3^-)=\{e_{13},e_{23},e_{89},e_{80}\}$. In the case of $z_4$, we have $z_1^+ \le z_4^-$ and $\operatorname{Supp}(z_1^-) \cap \operatorname{Supp}(z_4^+)=\{e_{67}\}$ as well as $z_2^+ \le z_4^+$ and $\operatorname{Supp}(z_2^-) \cap \operatorname{Supp}(z_4^-)=\{e_{13},e_{23},e_{89},e_{80}\}$. Thus, $M$ distance-reduces the circuits of $A_{G_1}$ as well as the Graver basis $\operatorname{Gr}(A_{G_1})$. By \cite[Theorem 7.4]{CK_2024}, since $M$ distance-reduces $\operatorname{Gr}(A_{G_1})$ then $M$ is distance-reducing. In particular, for this example the statement, $M$ is distance-reducing if and only if $M$ distance-reduces the circuits, holds.
\end{example}

\subsection{Complete intersection toric ideals of graphs}\label{sec: graphs complete intersection}

The following Theorem is Conjecture \ref{conj: distance reduction for graph ideals by circuits} in the case that $I_G$ is a complete intersection.

\begin{theorem} \label{thm:CImain}
    Let $G$ be a simple connected graph where $I_{G}$ is a complete intersection and let $M$ be a minimal Markov basis for $I_{G}$. Then, $M$ is distance-reducing if and only if $M$ distance-reduces the circuits of $A_G$.
\end{theorem}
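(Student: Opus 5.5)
The forward direction is immediate: a distance-reducing $M$ distance-reduces every element of $\ker_\ZZ(A_G)$, in particular the circuits. For the converse, assume $M$ distance-reduces $C(A_G)$. By \cite[Theorem 7.4]{CK_2024} it suffices to show that $M$ distance-reduces every $z \in \operatorname{Gr}(A_G)\setminus C(A_G)$. Since $A_G$ is homogeneous, we work throughout with Proposition~\ref{prop:drredef}. Concretely, we must find $m \in M$ with $m^+ \le z^{\pm}$ (for some choice of sign) and with the negative part of $m$ meeting the support of the opposite part of $z$.

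The key structural input is Theorem~\ref{thm:tatakisCI}: every element of $M$ except at most one, say $m_0$, corresponds to an even cycle. Fix a non-circuit primitive walk $w$ corresponding to $z$. By Theorem~\ref{thm:reyesprimitive}, $\mathbf{w}$ is a tree of blocks, each of which is a cycle or a cut edge. Since $z$ is not a circuit, Theorem~\ref{thm:villarealcircuits} tells us that $\mathbf{w}$ is not a single even cycle and not a pair of odd cycles glued at one vertex or joined by a path. The first step is to show that $\mathbf{w}$ contains a cycle block $C$ that is an even cycle. Indeed, the end blocks of $\mathbf{w}$ must be odd cycles: a cut edge cannot be an end block, by condition (\ref{item:prim3}), and an even-cycle end block would contradict the sink condition at its unique cut vertex, since parity would force the two incident edges to have opposite signs. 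If every cycle block were odd, then the block tree would have only two leaves (each internal cycle would be odd and would need exactly two cut vertices with the sink condition, and so on). With a careful parity count, this forces $\mathbf{w}$ to be two odd cycles joined by a path possibly passing through odd internal cycles. Here I expect to use Lemma~\ref{lem:vertex<3} and the sink structure to handle internal cycles. The cleanest route I would pursue is to show that, for the non-circuit case, there is an even cycle $c$ in $G$ whose binomial $B_c$ has one monomial dividing $E^+(w)$ or $E^-(w)$. Such a $c$ arises either from an even-cycle block, or by combining two arcs of an internal cycle with the parity of the walk.

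Given such a $c$, we have an even-cycle circuit $c$ applicable to $z$ and overlapping $z$ correctly, so $c$ itself distance-reduces $z$. If $c \in M$ we are done. Otherwise, by hypothesis $c$ is distance-reduced by some $m \in M$. The point is that distance reduction of $c$ by $m$ gives $m^+ \le c^{\pm}$ together with an overlap of $m^-$ with $c^{\mp}$. We then want to transfer this to $z$: since $c^{\pm} \le z^{\epsilon}$ for the appropriate sign, we get $m^+ \le z^{\epsilon}$. For the overlap condition, we need $\operatorname{Supp}(m^-)$ to meet $\operatorname{Supp}(z^{-\epsilon})$. This is the main obstacle. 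The overlap of $m^-$ with $c^{\mp}$ lies on edges of $c$, and those edges of $c$ lie in $z^{-\epsilon}$ when $c$ is a block of $\mathbf{w}$ with alternating signs, which is exactly the case of an even-cycle block. So I would first handle the case where the even cycle is a genuine block of $\mathbf{w}$, where this transfer works edge by edge.

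In the remaining case, where all cycle blocks of $\mathbf{w}$ are odd, I would use that $z$ has a decomposition into a circuit $c'$ (two end odd cycles joined by a path through $\mathbf{w}$) and pieces, with $c'^+ \le z^+$ and negative parts overlapping. I would then argue as above with $c'$ in place of $c$, distance-reducing $c'$ by some element of $M$. Here the need for the complete intersection hypothesis enters: either the reducing element is one of the even cycles of $M$, whose edges are all edges of $\mathbf{w}$ traversed once with alternating sign, or it is $m_0$. The latter case must be checked by hand using that $m_0$ is the unique non-even-cycle generator.
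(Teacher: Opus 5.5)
Your proposal has genuine gaps. The structural facts it rests on are incorrect, and the case you defer to ``checking by hand'' is where essentially all of the work lies.

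First, once $\mathbf{w}$ has at least two blocks, no cycle block carries alternating signs. Every block then contains a cut vertex $u$, and by condition~(\ref{item:prim3}) of Theorem~\ref{thm:reyesprimitive} the two edges of the cycle at $u$ have the \emph{same} sign in $z$. In Example~\ref{ex: CI graph}, the $4$-cycle block of $\mathbf{w_4}$ has signs $-,+,-,-$ on $e_{45},e_{56},e_{67},e_{47}$. There $z_1^+\le z_4^-$, but $\operatorname{Supp}(z_1^-)=\{e_{47},e_{56}\}$ with $e_{47}\mid E^-(w_4)$ and $e_{56}\mid E^+(w_4)$. So the circuit $c$ of an even-cycle block need not be applicable to $z$ at all. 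Even when it is, an overlap of $m^-$ with $c^{\mp}$ can land in the wrong part of $z$, so your ``edge by edge'' transfer fails.

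The sink condition also gives the correct parity count, and it inverts your dichotomy. Each excursion of $w$ at a cut vertex of a cycle block has odd length, so a cycle block with $r$ cut vertices has length $\equiv r \pmod 2$. Hence if the block tree has only two leaves, every internal cycle block is even. A non-circuit $z$ with no even cycle block therefore has at least three end blocks; an example is a triangle each of whose vertices is joined by a cut edge to a pendant odd cycle. So your first step, finding an even cycle block, cannot be proved from Theorem~\ref{thm:reyesprimitive} alone.

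Finally, take a circuit $c'$ made of two end odd cycles and a connecting path. The claim $c'^+\le z^+$ is false in general: path edges have multiplicity $2$ in $c'$ but may have multiplicity $1$ in $z$. Only $\operatorname{Supp}(c'^{\pm})\subseteq\operatorname{Supp}(z^{\pm})$ holds. This is exactly why reductions can only be transferred from elements with entries in $\{0,\pm1\}$, i.e.\ cycles, as in Lemma~\ref{lem:cycleDR}.

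Second, and more seriously, the case where the relevant circuits are reduced only by the unique non-cycle generator $m_0$ is the heart of the theorem, and your outline offers no mechanism for it. The paper's route is as follows.
\begin{enumerate}
\item Write $w=(v_1,c_1,v_1,p_1,\dots,v_k,c_k,v_k,p_k,v_1)$, with odd end cycles $c_i$ and paths $p_i$ taken along $w$.
\item Form the circuits $z_i$ from $c_i,p_i,c_{i+1}$. Their positive and negative supports lie inside those of $z$.
\item If some $z_i$ is reduced by a cycle of $M$, Lemma~\ref{lem:cycleDR} finishes.
\item Otherwise every $z_i$ is reduced by $\pm m_0$, and Lemmas~\ref{lem:vertexin} and~\ref{lem:vertex<3} force $k=2$.
\item After fixing signs, the failure of $m_0$ to reduce $z$ yields an edge $e\in E(p_1)$ with $(m_0)_e=2$, $z_e=1$, and an edge $f\notin E(p_1)$ with $(m_0)_f=-2$, $z_f=-1$.
\item Rerouting $p_1$ around $e$ along $p_2$ gives a circuit $w'$ avoiding both $e$ and $f$. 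Then $m_0$ cannot reduce $w'$, and any cycle reducing $w'$ would also reduce $z$.
\end{enumerate}
An argument of this kind, exploiting that $m_0$ is the only non-cycle generator, is what your outline is missing.
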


To prove this result, we require two technical lemmas.

\begin{lemma} \label{lem:cycleDR}
    Let $G$ be a simple, connected graph and let $\hat z, y,z \in \operatorname{Gr}(A_G)$ where $\operatorname{Supp}(y^+) \subseteq \operatorname{Supp}(z^+)$ and $\operatorname{Supp}(y^-) \subseteq \operatorname{Supp}(z^-)$. Then, if $y$ is distance-reduced by $\hat z$, where the walk corresponding to $\hat z$ on the graph $G$ is a cycle, then $z$ is also distance-reduced by $\hat z$.
\end{lemma}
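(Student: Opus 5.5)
By Proposition~\ref{prop:drredef}, up to the signs of $\hat z$ and $y$ we have $\hat z^+ \le y^+$ and $\operatorname{Supp}(\hat z^-)\cap\operatorname{Supp}(y^-)\neq\emptyset$. Two observations reduce the problem. First, replacing $y$ by $-y$ swaps $y^+$ and $y^-$. Second, the hypotheses $\operatorname{Supp}(y^\pm)\subseteq\operatorname{Supp}(z^\pm)$ are then preserved if we also replace $z$ by $-z$. So we may assume $\hat z^+\le y^+$ and $\operatorname{Supp}(\hat z^-)\cap\operatorname{Supp}(y^-)\neq\emptyset$ exactly, with the sign of $\hat z$ absorbed as well. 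The goal is then to show that $\hat z^+\le z^+$ and $\operatorname{Supp}(\hat z^-)\cap\operatorname{Supp}(z^-)\neq\emptyset$, and Proposition~\ref{prop:drredef} finishes the proof.

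The support-overlap condition is immediate. Since $\operatorname{Supp}(y^-)\subseteq\operatorname{Supp}(z^-)$, any edge in $\operatorname{Supp}(\hat z^-)\cap\operatorname{Supp}(y^-)$ also lies in $\operatorname{Supp}(\hat z^-)\cap\operatorname{Supp}(z^-)$.

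The real content is upgrading the support containment to the componentwise inequality $\hat z^+\le z^+$, and this is where the cycle hypothesis is used. Since the walk corresponding to $\hat z$ is a cycle, every edge of it is traversed exactly once. Hence every entry of $\hat z$ lies in $\{-1,0,1\}$, and in particular $\hat z^+$ is a $0/1$ vector. From $\hat z^+\le y^+$ we get $\operatorname{Supp}(\hat z^+)\subseteq\operatorname{Supp}(y^+)\subseteq\operatorname{Supp}(z^+)$. For each $e\in\operatorname{Supp}(\hat z^+)$ we then have $\hat z^+_e=1\le z^+_e$, because $z^+_e$ is a positive integer. Off this support $\hat z^+_e=0\le z^+_e$. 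Therefore $\hat z^+\le z^+$.

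I expect no serious obstacle. The one point needing care is justifying that a cycle walk gives $0/1$ entries. A cycle visits each of its vertices once and, since $G$ is simple, uses each edge once. Consequently each edge contributes exactly once to $E^+(w)$ or to $E^-(w)$, so $\beta(\hat z)=B_w$ is squarefree. Some care is also needed with the bookkeeping of signs at the start.
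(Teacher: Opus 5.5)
Your proposal is correct and follows the paper's proof essentially line for line. You apply Proposition~\ref{prop:drredef}, transfer the support overlap from $y^-$ to $z^-$ via $\operatorname{Supp}(y^-)\subseteq\operatorname{Supp}(z^-)$, and upgrade $\operatorname{Supp}(\hat z^+)\subseteq\operatorname{Supp}(z^+)$ to $\hat z^+\le z^+$ using that a cycle gives entries of $\hat z$ in $\{-1,0,1\}$; your explicit sign bookkeeping (flipping $y$ and $z$ together) is just a more careful rendering of the paper's ``up to switching signs.''
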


\begin{proof}
    Since $y$ is distance-reduced by $\hat z$, then, by Proposition \ref{prop:drredef}, up to switching signs, $\hat z^+ \le y^+$ and $\operatorname{Supp}(\hat z^-) \cap \operatorname{Supp}(y^-)$ is non-empty. 
    
    Since $\hat z^+ \le y^+$, then $\operatorname{Supp}(\hat z^+) \subseteq \operatorname{Supp}(y^+)$ and we know $\operatorname{Supp}(y^+) \subseteq \operatorname{Supp}(z^+)$ so $\operatorname{Supp}(\hat z^+) \subseteq \operatorname{Supp}(z^+)$. But, $\hat z$ corresponds to a cycle on the graph $G$, so $(\hat z)_e \in \{-1,0,1\}$ for all $e \in E(G)$. Now, for $e \in \operatorname{Supp}(\hat z^+)$, $(\hat z)_e=1$. In addition, since $\operatorname{Supp}(\hat z^+) \subseteq \operatorname{Supp}(z^+)$, then $e \in \operatorname{Supp}(z^+)$, so $(z^+)_e \ge 1$. This means that $(\hat z^+)_e=1 \le (z^+)_e$. Finally, if $e \not\in \operatorname{Supp}(\hat z^+)$, then $(\hat z^+)_e=0 \le (z^+)_e$ since $z^+ \in \NN^n$. Thus, $\hat z^+ \le z^+$. 
    
    In addition, since $\operatorname{Supp}(\hat z^-) \cap \operatorname{Supp}(y^-)$ is non-empty, we can find some $e \in \operatorname{Supp}(\hat z^-) \cap \operatorname{Supp}(y^-)$. Since $\operatorname{Supp}(y^-) \subseteq \operatorname{Supp}(z^-)$, then $e \in \operatorname{Supp}(\hat z^-) \cap \operatorname{Supp}(z^-)$ meaning $ \operatorname{Supp}(\hat z^-) \cap \operatorname{Supp}(z^-)$ is non-empty. Thus, both conditions of Proposition \ref{prop:drredef} have been fulfilled, so $z$ is distance-reduced by $\hat z$.
\end{proof}

\begin{lemma} \label{lem:vertexin}
    Let $G$ be a simple, connected graph and let $\hat z,z \in \operatorname{Gr}(A_G)$ where $z$ is distance-reduced by $\hat z$. If $\hat w,w$ are walks corresponding to $\hat z,z$ respectively, then $V(\hat w) \subseteq V(w)$.
\end{lemma}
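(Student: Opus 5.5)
By Proposition~\ref{prop:drredef}, after possibly changing the signs of $\hat z$ and $z$, we have $\hat z^+ \le z^+$ and $\operatorname{Supp}(\hat z^-) \cap \operatorname{Supp}(z^-) \neq \emptyset$. Changing signs only reverses or reparametrises the walks and leaves the vertex sets unchanged, so we may assume these two conditions hold. In particular $\operatorname{Supp}(\hat z^+) \subseteq \operatorname{Supp}(z^+) \subseteq E(w)$.

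The plan is to show that every vertex of $\hat w$ is an endpoint of some edge in $\operatorname{Supp}(\hat z^+)$. Once this is established, every vertex of $\hat w$ is an endpoint of an edge of $w$ and hence lies in $V(w)$.

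The key step uses the homogeneity structure of $A_G$. Fix $v \in V(\hat w)$. Since $\hat w$ is an even closed walk with edges alternating between $E^+(\hat w)$ and $E^-(\hat w)$, each visit of $\hat w$ to $v$ enters along one edge and leaves along the next. The entering and leaving edges are consecutive in the walk, so they have opposite parity, and one of them contributes to $E^+(\hat w)$. The closing visit at $v_{i_0}$ pairs $e_{j_{2q}}$ with $e_{j_1}$, which again have opposite parity. So every vertex of $\hat w$ is incident to an edge that divides $E^+(\hat w)$.

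There is one subtlety: such an edge divides $E^+(\hat w)$ but might also divide $E^-(\hat w)$, in which case it would cancel in $\hat z$ and not lie in $\operatorname{Supp}(\hat z^+)$. This cannot happen, because $\hat z \in \operatorname{Gr}(A_G)$ means $B_{\hat w}$ is primitive. As noted before Theorem~\ref{thm:reyesprimitive}, no edge of a primitive walk divides both $E^+(\hat w)$ and $E^-(\hat w)$. Hence $\beta(\hat z) = B_{\hat w}$ has $x^{\hat z^+} = E^+(\hat w)$, and every edge dividing $E^+(\hat w)$ lies in $\operatorname{Supp}(\hat z^+)$.

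An alternative, more algebraic route avoids walks: the row of $A_G$ indexed by $v$ annihilates $\hat z$, so $\sum_{e \ni v} \hat z^+_e = \sum_{e \ni v} \hat z^-_e$. Since $v \in V(\hat w)$, some edge at $v$ lies in $\operatorname{Supp}(\hat z)$ (for a primitive walk, $E(\hat w) = \operatorname{Supp}(\hat z)$), so both sides are positive. Hence there is an edge $e \ni v$ with $\hat z^+_e > 0$, and then $z^+_e > 0$. Therefore $e \in \operatorname{Supp}(z) = E(w)$, again using primitivity of $w$ to identify $E(w)$ with $\operatorname{Supp}(z)$, and $v \in V(w)$.

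I would present this second argument, since it is short and clean. The main point to check carefully is the identification $E(\hat w) = \operatorname{Supp}(\hat z)$ and $E(w) = \operatorname{Supp}(z)$ for primitive walks, which relies on the no-cancellation remark. The overlap condition on negative supports is not needed for this lemma.
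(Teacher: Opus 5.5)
Your proposal is correct. Your first route is the paper's own proof. The paper takes $\hat z^+ \le z^+$ from Proposition~\ref{prop:drredef}, asserts that every $v \in V(\hat w)$ is incident to an edge dividing $E^+(\hat w)$, and concludes from $E^+(\hat w) \mid E^+(w)$ that this edge lies in $E(w)$. The paper states the incidence fact without justification; you supply it via the alternation of consecutive edges between $E^+(\hat w)$ and $E^-(\hat w)$.

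Your preferred second route is a genuinely different local argument. It replaces the walk-parity observation with the vertex-balance identity $\sum_{e \ni v} \hat z_e = 0$, read off from the row of $A_G$ indexed by $v$. This forces an edge of $\operatorname{Supp}(\hat z^+)$ at every vertex touched by $\operatorname{Supp}(\hat z)$. It needs only $A_G \hat z = 0$, $E(\hat w) = \operatorname{Supp}(\hat z)$ and $\operatorname{Supp}(z) \subseteq E(w)$. So it does not depend on how the walks are parametrised or re-signed. The paper's version, by contrast, stays inside the walk formalism used throughout the proof of Theorem~\ref{thm:CImain}.

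A small remark: the identification $E(\hat w) = \operatorname{Supp}(\hat z)$ is immediate from $\beta(\hat z) = \pm B_{\hat w}$. This is because $\hat z^+$ and $\hat z^-$ have disjoint supports, so the no-cancellation property is already built into the correspondence between Graver elements and walks. As you observe, both arguments use only the divisibility condition, never the overlap of negative supports.
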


\begin{proof}
    Since $z$ is distance-reduced by $\hat z$, by Proposition \ref{prop:drredef}, up to switching signs, we have $\hat z^+ \le z^+$. Now, take a vertex $v \in V(\hat w)$. $v$ is adjacent to at least one edge $e \in E(\hat w)$ which divides $E^+(\hat w)$. Since $\hat z^+ \le z^+$, then $E^+(\hat w) | E^+(w)$ so $e|E^+(w)$. Thus, $e \in E(w)$ and $v \in V(w)$. Since $v$ was an arbitrary vertex, then $V(\hat w) \subseteq V(w)$.
\end{proof}

We are now ready to give a proof of Theorem~\ref{thm:CImain}. The strategy of this proof is to assume that a minimal Markov basis $M$ distance-reduces all the circuits of $\ker_\ZZ(A_G)$ and then, given any $z$ in the Graver basis, find an element of $M$ which distance-reduces $z$. In practice, we assume $M$ does not distance-reduce $z$ and come to a contradiction and this is enough to imply Theorem \ref{thm:CImain}. We start by decomposing the walk $w$ corresponding to $z$ into a number of subwalks $w_i$ which all correspond to circuits $z_i$ and thus are all distance-reduced by $M$. Then, by Theorem \ref{thm:tatakisCI}, all but at most one of the elements of $M$ correspond to cycles and when one of these cycles distance-reduces a circuit $z_i$, by Lemma \ref{lem:cycleDR}, it also distance-reduces $z$. Thus, in this case, $M$ distance-reduces $z$ and we are done. The rest of the proof, which is the majority, is then dedicated to the case that $M$ contains the element which is not a cycle and this element distance-reduces all the circuits $z_i$. This allows us to narrow down the structure of $w$ until we eventually find another element of $M$ which distance-reduces $z$ or we come to a contradiction.

\begin{proof}[Proof of Theorem~\ref{thm:CImain}.]
    The ($\Rightarrow$) direction follows immediately since, if $M$ is distance-reducing then it distance-reduces everything, including the circuits. For the ($\Leftarrow$) direction, by \cite[Proposition 7.2]{CK_2024}, to check if $M$ is distance-reducing, it suffices to check if $M$ distance-reduces all the elements of the Graver basis, $\operatorname{Gr}(A_G)$. So, suppose $M$ distance-reduces all the circuits of $A_G$ but fails to distance-reduce some $z \in \operatorname{Gr}(A_G)$, where we aim to prove a contradiction. Since $z$ is primitive, we can let $w$ denote the primitive even closed walk in the graph $G$ corresponding to $z$. Now, if $w$ is made up of $1$ block, then, since $w$ is primitive, by Theorem \ref{thm:reyesprimitive}, $w$ is an even cycle. However, by part (\ref{item:circ1}) of Theorem \ref{thm:villarealcircuits}, then $z$ is a circuit and, $z$ is distance-reduced by $M$ by assumption, which is a contradiction since $M$ does not distance-reduce $z$.

       Therefore, we can assume $w$ is made up of at least 2 blocks. Let $B_1,\dots,B_k$ denote the end blocks of the block tree $B(\mathbf{w})$, labelled whilst travelling around the walk $w$. Since $w$ is made up of at least $2$ blocks and any tree on at least $2$ vertices has at least $2$ leaves, then $k \ge 2$. Each end block $B_i$ is a cycle and contains a unique cut vertex, $v_i$, and we let $c_i$ denote the closed walk around $B_i$ starting and ending at $v_i$. Since the $v_i$ are sinks, then the $c_i$ are odd cycles for all $i \in [k]$. Thus, we can write 
    $$w=(v_1,c_1,v_1,p_1,v_2,c_2,v_2,p_2,\dots,v_k,c_k,v_k,p_k,v_1)$$ 
    for some walks $p_i$ where $i \in [k]$. In fact the $p_i$ are paths since, if a vertex is repeated along $p_i$ then, by Lemma \ref{lem:vertex<3}, this vertex is visited twice by $w$ so it is a cut vertex. Thus, there will be an end block in the block tree $B(\mathbf{w})$ which $p_i$ will travel around which is not accounted for by $B_1,\dots, B_k$. Note that a path $p_i$ could have length $0$ in which case $v_i=v_{i+1}$ and then $c_i$ and $c_{i+1}$ would share this cut vertex. Now define the even closed walks, 
    $$w_i:=(v_i,c_i,v_i,p_i,v_{i+1},c_{i+1},v_{i+1},-p_i,v_i)$$ 
    for $i \in [k-1]$ and 
    $$w_{k}:=(v_k,c_k,v_k,p_k,v_{1},c_{1},v_{1},-p_k,v_k)$$ 
    which correspond to elements $z_i \in \ker_\ZZ(A_G)$, for all $i \in [k]$. Letting $(m)_{sf}$ denote the square-free part of the monomial $m$, we set the signs of the $B_{w_i}$ such that $(E^+(w_i))_{\mathrm{sf}}|E^+(w)$ and $(E^-(w_i))_{\mathrm{sf}}|E^-(w)$ for all $i \in [k]$. We can do this since, even though edges in the paths $p_i$ are traversed twice in $w_i$ as opposed to once in $w$, both edges will always appear on the same side of the binomial (hence the need to consider the square-free parts of $E^+(w_i)$ and $E^-(w_i)$). This implies that $\operatorname{Supp}(z_i^+) \subseteq \operatorname{Supp}(z^+)$ and $\operatorname{Supp}(z_i^-) \subseteq \operatorname{Supp}(z^-)$. We also notice that the $w_i$ are all of the form: two odd cycles intersecting at one vertex (if the path $p_i$ has length $0$) or two vertex disjoint odd cycles joined by a walk (if the path $p_i$ has length $\ge 1$) and thus, by parts (\ref{item:circ2}) and (\ref{item:circ3}) of Theorem \ref{thm:villarealcircuits}, the $z_i$ are all circuits. By our assumption that $M$ distance-reduces all the circuits of $A_G$, we can find $\hat z_i \in M$, which distance-reduces $z_i$ for all $i \in [k]$. Since $\operatorname{Supp}(z_i^+) \subseteq \operatorname{Supp}(z^+)$ and $\operatorname{Supp}(z_i^-) \subseteq \operatorname{Supp}(z^-)$, we can use Lemma \ref{lem:cycleDR} on $\hat z_i,z_i,z$. Indeed, if $\hat z_i$ is a cycle for some $i \in [k]$, Lemma \ref{lem:cycleDR} tells us that $\hat z_i$ also distance-reduces $z$ and, since $\hat z_i \in M$, then $M$ distance-reduces $z$ and we have a contradiction. 
    
    Thus, we can assume that none of the $\hat z_i$ are cycles. We now use Theorem \ref{thm:tatakisCI}. All $\hat z_i$ are in our minimal Markov basis $M$ so they are all minimal and, since none of them are cycles, they must all correspond to the same element of $M$. However, since we ignore the signs of elements in our minimal Markov basis $M$, this implies that the $\hat z_i$ are only equal up to sign. Thus, we let $\hat z :=\hat z_1$ with corresponding walk $\hat w$. For $2$ elements $\hat y,y$, if $\hat y$ distance-reduces $y$ then $-\hat y$ also distance-reduces $y$. Thus, since $\hat z_i$ distance-reduces $z_i$, then $\hat z$, which is equal to either $\hat z_i$ or $-\hat z_i$, also distance-reduces $z_i$ for all $i \in [k]$. 
    
\vspace{2mm}
\textbf{Claim:} $k=2$.

\textit{Proof:} Suppose $k \neq 2$. Since $k \ge 2$, then $k \ge 3$. Now, fix some vertex $v \in V(\hat w)$. Since $\hat z$ distance-reduces $z_i$ for all $i \in [k]$, by Lemma \ref{lem:vertexin}, $v \in V(\hat w) \subseteq V(w_i)$ for all $i \in[k]$. In particular, $v$ is a vertex in at least $3$ walks $w_i$. For all $i \in [k]$, vertices in $ V(c_i) \backslash \{v_i\}$ appear exclusively in walks $w_{i-1}$ and $w_i$ since, if the vertices of $c_i$ intersected the vertices of $c_j$ for $j \neq i$ or $p_j$ for any $j \in [k]$ anywhere apart from at $v_i$, then this vertex $v'$ would be visited twice by $w$. Since $w$ is primitive, we can apply Lemma \ref{lem:vertex<3} which tells us that then $v'$ would be a cut vertex, contradicting the fact that the block $B_i$ has a unique cut vertex which is $v_i$. Now, since $v$ appears in at least $3$ walks $w_i$ then $v \not\in V(c_i) \backslash \{v_i\}$ for any $i \in [k]$ and so $v \in V(p_i)$ for all $i \in [k]$. This implies that $v$ is visited at least $3$ times by the walk $w$ which contradicts Lemma \ref{lem:vertex<3} and thus $k=2$.
\vspace{2mm}

Since $\hat z$ distance-reduces $z_1$, we can use Proposition \ref{prop:drredef} which tells us that $\hat z^+ \le z_1^+$ and $\operatorname{Supp}(\hat z^-) \cap \operatorname{Supp}(z_1^-) \neq \emptyset$ up to swapping signs of $\hat z$, $z_1$ and $z$. But, since $\operatorname{Supp}(z_1^-) \subseteq \operatorname{Supp}(z^-)$ then this implies that $\operatorname{Supp}(\hat z^-) \cap \operatorname{Supp}(z^-) \neq \emptyset$. However, $z$ can not be distance-reduced by $\hat z$ since $\hat z \in M$ and $M$ fails to distance-reduce $z$ and thus, by Proposition \ref{prop:drredef}, $\hat z^+ \not\le z^+$. Since $\hat w$ and $w$ are primitive, by condition (\ref{item:prim2}) of Theorem \ref{thm:reyesprimitive}, edges of $G$ are traversed at most twice by $\hat w$ and $w$ and so exponents in $E^+(\hat w)$ and $E^+(w)$ are at most $2$ and thus components of $\hat z,z$ have absolute value at most $2$. Also, $\operatorname{Supp}(\hat z^+) \subseteq \operatorname{Supp}(z_1^+) \subseteq\operatorname{Supp}(z^+)$ so the only way that $\hat z^+ \not\le z^+$ can happen is if there is some $e \in E(G)$ such that $\hat z_e=2$ and $z_e=1$. In terms of binomials, this means $e^2 | E^+( \hat w)$ and $e|E^+(w)$ but $e^2 \nmid E^+(w)$. We note that, since $\hat z^+ \le z_1^+$, then $E^+(\hat w)|E^+(w_1)$ so $e^2|E^+(w_1)$ as well, which implies $e \in E(p_1)$ since the only double edges of $w_1$ occur in $p_1$.


\vspace{2mm}
\textbf{Claim:} $e \not \in E(w_2)$.

\textit{Proof:} Suppose $e \in E(w_2)$. Then, either $e|E^+(w_2)$ or $e|E^-(w_2)$.
\begin{itemize}
    \item If $e|E^+(w_2)$, since $e$ does not appear along $c_1$ or $c_2$, then $e \in E(p_2)$ which implies $e^2 |E^+(w)$ since $e$ is traversed twice by $w$ (once in $p_1$ and again in $p_2$). This is a contradiction since $e^2 \nmid E^+(w)$ by construction.
    \item If $e|E^-(w_2)$, then, since $\operatorname{Supp}(z_2^-) \subseteq \operatorname{Supp}(z^-)$, then $e|E^-(w)$. But, we also have $e|E^+(w)$ and so $e|B_w$ which is a contradiction since the binomial $B_w$ is primitive since $w$ is primitive.
\end{itemize}

Thus, in both cases we come to a contradiction and thus $e \not\in E(w_2)$.
\vspace{2mm}

Since $\hat z$ also distance-reduces $z_2$ and the signs of $\hat z$ and $z_2$ are already fixed by the choices we made earlier, we have one of $\hat z^+ \le z_2^+$, $\hat z^+ \le z_2^-$, $\hat z^- \le z_2^+$ and $\hat z^- \le z_2^-$. If we are in one of the first two cases, then $E^+(\hat w)|E^+(w_2)$ or $E^+(\hat w)|E^-(w_2)$. Since $e|E^+(\hat w)$, then, either way, $e \in E(w_2)$, which is a contradiction to our previous claim. Furthermore, if $\hat z^- \le z_2^+$, then $\operatorname{Supp}(\hat z^-) \subseteq \operatorname{Supp}(z_2^+) \subseteq \operatorname{Supp}(z^+)$. But, $\operatorname{Supp}(\hat z^+) \subseteq \operatorname{Supp}(z^+)$ so $\operatorname{Supp}(\hat z) \subseteq \operatorname{Supp}(z^+)$ which is a contradiction since the edges corresponding to $z^+$ are disconnected. Thus, assume $\hat z^- \le z_2^-$. Since $\operatorname{Supp}(\hat z^+) \subseteq\operatorname{Supp}(z^+)$ but $z$ is not distance-reduced by $\hat z$ then $\hat z^- \not\le z^-$. Similarly to before, this implies there is some $f \in E(G)$ where $f|E^-(w)$ and $f^2|E^-(\hat w)$ but $f^2 \nmid E^-(w)$. In addition, $f \in E(w_2)$ but $f \not\in E(w_1)$, in particular, $f\not\in E(p_1)$.


Now, consider the two edges adjacent to $e$ in the walk $\hat w$, which we label $e_1,e_2$. Both $e_1,e_2$ divide $E^-(\hat w)$ and thus $E^{-}(w_2)$ since $\hat z^- \le z_2^-$. So $e_1,e_2 \in E(w_2)$. If we let $e=\{v_e,v'_e\}$, this implies $v_e,v'_e \in V(w_2)$ and, furthermore, since $e \in E(p_1)$, then $e \not\in E(c_1)$ and $e \not\in E(c_2)$ so $v_e,v'_e \in V(p_2)$. Since $p_2$ is a path, $v_e,v'_e$ are both visited exactly once by $p_2$. Thus, we can define the unique subwalk of $p_2$ between $v_e$ and $v'_e$ to be $p_e$. Note that $p_e$ is a path since $p_2$ is as well and $V(p_e)\cap V(p_1)=\{v_e,v'_e\}$ since otherwise the block of $\mathbf{w}$ containing $e$ is not a cycle or a cut edge so, by condition (\ref{item:prim1}) of Theorem \ref{thm:reyesprimitive}, $w$ is not primitive. Since $p_1$ is a path and $e=\{v_e,v'_e\} \in E(p_1)$, we can let $p'_1$ denote the path which is $p_1$ except, instead of travelling between $v_e$ and $v'_e$ using the edge $e$, we take the path $p_e$ instead. We note that neither $e$ nor $f$ are in $p'_1$. Now, consider the closed walk
$$w':=(v_1,c_1,v_1,p'_1,v_2,c_2,v_2,-p'_1,v_1).$$
Since $c_1,c_2$ are odd cycles then $w'$ is an even walk. We fix the sign of $B_{w'}$ such that $(E^+(w'))_{\mathrm{sf}}|E^+(w)$ and $(E^-(w'))_{\mathrm{sf}}|E^-(w)$. Now, let $z'$ denote the corresponding element in $\ker_\ZZ(A_G)$ and notice that $\operatorname{Supp}((z')^+) \subseteq \operatorname{Supp}(z^+)$ and $\operatorname{Supp}((z')^-) \subseteq \operatorname{Supp}(z^-)$. Furthermore, $z'$ is a circuit by part (\ref{item:circ3}) of Theorem \ref{thm:villarealcircuits} so it is distance-reduced by $M$. If the element in $M$ which distance-reduces $z'$ is not $\hat z$, then, by Theorem \ref{thm:tatakisCI}, it is a cycle. So, by Lemma \ref{lem:cycleDR}, $z$ is distance-reduced by this element and thus $M$ distance-reduces $z$ which is a contradiction. Thus, $z'$ is distance-reduced by $\hat z$ and, since $\hat z^+ \le z_1^+$ and $\hat z^- \le z_2^-$, we must have either $\hat z^+ \le (z')^+$ or $\hat z^- \le (z')^-$. If $\hat z^+ \le (z')^+$ then, since $e|E^+(\hat w)$ then $e|E^+(w')$ but this is a contradiction since $e \not\in E(w')$. If $\hat z^- \le (z')^-$ then, since $f|E^-(\hat w)$ then $f|E^-(w')$ which is also a contradiction since $f \not\in E(w')$ either.

This concludes the proof of the result.
\end{proof}

Theorem \ref{thm:CImain} is particularly interesting in light of Example \ref{ex:CIhombad} where it is shown that it is not enough just to check the circuits if $I_A$ is a complete intersection and homogeneous toric ideal. This demonstrates that the class of toric ideals coming from graphs is not generally as complicated as the class of homogeneous toric ideals, at least, not in the complete intersection case.

\section{Bouquet structure and distance reduction} \label{sec:Bouquets}

Let $A$ be an integer matrix. In \cite{petrovic2018bouquet}, the \textit{bouquets} $\{B_1, \dots, B_s\}$ of $A$, the \textit{bouquet matrix} $A_B$ and its associated \textit{bouquet ideal} $I_{A_B}$ are defined. These objects capture important combinatorial information about the toric ideal associated to $A$. Each bouquet is either free, non-mixed or mixed, which we label by $0,+$ and $-$ respectively and this is called the \textit{signature} of the bouquets. In addition, there is a bijection between $\ker_{\ZZ}(A)$ and $\ker_{\ZZ}(A_B)$ which preserves elements of the Graver basis and the circuits. In \cite{petrovic2018bouquet}, the authors also present an inverse construction where we begin with a matrix (which acts as our bouquet matrix) and a set of primitive vectors (which control the signature of the bouquets) and this produces a matrix, known as the \textit{generalised Lawrence matrix}, which has the desired bouquet ideal and signature. It is worth noting that, by \cite[Corollary~2.3]{petrovic2018bouquet}, every toric ideal arises as a generalised Lawrence matrix so this inverse construction produces all possible toric ideals which have the given bouquet ideal and signature.

In \cite{kosta2023strongly}, it is shown that two matrices which have the same bouquet ideal and signature have more in common than just their Graver basis and circuits and, in fact, there is a bijection between minimal Markov bases as well. When considering the distance-reduction property, this naturally prompts the question of whether the distance-reduction property is preserved between two such matrices.

In fact, the answer is no and we provide a counterexample now. 

\begin{example}
\label{ex: same bouquet diff DR}
Consider the two following matrices:
\[
U=\begin{bmatrix} u_1 & u_2 & u_3 & u_4 \end{bmatrix}=\begin{bmatrix} 1&1&0&1\\0&1&1&0 \end{bmatrix}
\text{ and }
V=\begin{bmatrix} v_1 & v_2 & v_3 & v_4 & v_5 \end{bmatrix}=\begin{bmatrix} 1&0&1&0&1 \\ 0&1&1&0&1 \\ 0&0&1&1&0 \end{bmatrix}.
\]
Bases for $\ker_\ZZ(U)$ and $\ker_\ZZ(V)$ are given by
$$\begin{bmatrix}1&1\\-1&0\\1&0\\0&-1\end{bmatrix} \quad \text{ and } \quad \begin{bmatrix}1&1\\1&1\\-1&0\\1&0\\0&-1\end{bmatrix}$$
so we can read off that both $U$ and $V$ have three bouquets,
\[
\{u_1\},\, \{u_2,u_3\},\, \{u_4\}
\quad \text{and} \quad
\{v_1,v_2\},\, \{v_3,v_4\},\, \{v_5\}
\]
and the signature for both sets of bouquets is  $+,-,+$. 
The bouquet-index-encoding vectors are then given by
$$\begin{pmatrix}1\\0\\0\\0\end{pmatrix},\begin{pmatrix}0\\1\\-1\\0\end{pmatrix},\begin{pmatrix}0\\0\\0\\1\end{pmatrix} \quad \text{ and } \quad \begin{pmatrix}1\\1\\0\\0\\0\end{pmatrix},\begin{pmatrix}0\\0\\1\\-1\\0\end{pmatrix},\begin{pmatrix}0\\0\\0\\0\\1\end{pmatrix}$$
and the bouquet ideal for both is $I_T$ where $T=\begin{bmatrix} 1 & 1 & 1 \end{bmatrix}$.

Since $I_U$ and $I_V$ have the same bouquet ideal and signature, by \cite{kosta2023strongly}, we have a bijection between the minimal Markov bases of $U$ and $V$. Indeed, both $U$ and $V$ have two minimal Markov bases, which we label  
$$M_U=\begin{bmatrix} 1&0&0&-1\\0&1&-1&-1\end{bmatrix} \quad M_U'=\begin{bmatrix} 1&0&0&-1\\1&-1&1&0\end{bmatrix}$$
$$M_V=\begin{bmatrix} 1&1&0&0&-1\\0&0&1&-1&-1\end{bmatrix} \quad M_V'=\begin{bmatrix} 1&1&0&0&-1\\1&1&-1&1&0\end{bmatrix}$$ 
respectively. We observe that $M_U,M_U',M_V$ are all distance-reducing whilst $M_V'$ reduces all elements of the Graver basis apart from $g=\begin{pmatrix} 0&0&1&-1&-1\end{pmatrix}$. This observation implies that the distance-reduction property is not preserved by the bijection between Markov bases. We confirm this computationally by running the code in the file \texttt{distanceReductionCode.m2} in \cite{githubM2DistanceReduction}
in Macaulay2 \cite{M2} and then using the \textit{mbfailures} command as follows.

\begin{lstlisting}
i1 : U = matrix "1,1,0,1;0,1,1,0"

o1 = | 1 1 0 1 |
     | 0 1 1 0 |

i2 : mbfailures U

o2 = {{| 1 0 0  -1 |, 0}, {| 1 0  0 -1 |, 0}}
       | 0 1 -1 -1 |       | 1 -1 1 0  |

i3 : V = matrix "1,0,1,0,1;0,1,1,0,1;0,0,1,1,0"

o3 = | 1 0 1 0 1 |
     | 0 1 1 0 1 |
     | 0 0 1 1 0 |

i4 : mbfailures V

o4 = {{| 1 1 0 0  -1 |, 0}, {| 1 1 0  0 -1 |, | 0 0 1 -1 -1 |}}
       | 0 0 1 -1 -1 |       | 1 1 -1 1 0  |
\end{lstlisting}

The command \textit{mbfailures} takes in a matrix and outputs all the minimal Markov bases alongside which elements of the Graver basis they fail to distance-reduce. Since the elements the minimal Markov bases fail to distance-reduce are formatted as matrices, there is just a $0$ next to a minimal Markov basis if all elements of the Graver basis are distance-reduced. By \cite[Theorem 7.4]{CK_2024}, this means that the minimal Markov basis in question is distance-reducing. Thus, from the Macaulay2 output we can see that $M_U,M_U',M_V$ are all distance-reducing whilst $M_V'$ is not.

We can examine why this difference occurs by looking at the fiber of $V$ containing $g^+$ and $g^-$. 
Since $V g^+=Vg^-=v_3$ then this fiber is $\mathcal{F}_{v_3}$. From Figure~\ref{fig:F111}, we see that we cannot reduce the distance between $g^+$ and $g^-$, which is $3$, by applying either move from $M_V'$. Indeed, if we apply $\begin{pmatrix}1&1&-1&1&0\end{pmatrix} \in M'_V$ to $g^+$, we move to $\begin{pmatrix}1&1&0&1&0\end{pmatrix} \in \mathcal{F}_{v_3}$ where the distance to $g^-$ stays at $3$ and doesn't decrease and if we apply $\begin{pmatrix}1&1&0&0&-1\end{pmatrix} \in M'_v$ to $g^-$ we also move to $\begin{pmatrix}1&1&0&1&0\end{pmatrix} \in \mathcal{F}_{v_3}$ which actually increases the distance to $g^+$ to $4$. Thus $g$ is not distance-reduced by $M'_V$ On the other hand, $g $ is an element of $ M_V$ so $g$ is trivially distance-reduced by $M_V$.

\begin{figure}
    \subfloat[\centering $\mathcal{F}_{v_3}$]{{
    \begin{tikzpicture} [scale=1]
	\begin{pgfonlayer}{nodelayer}
        \node[] (1) at (2,3.25) {$\begin{pmatrix} 1\\1\\0\\1\\0\end{pmatrix}$};
        \node[] (2) at (0,0) {$\begin{pmatrix} 0\\0\\0\\1\\1\end{pmatrix}$};
        \node[] (3) at (4,0) {$\begin{pmatrix} 0\\0\\1\\0\\0\end{pmatrix}$};
	\end{pgfonlayer}
	\begin{pgfonlayer}{edgelayer}
        \draw[bend right] (1) to (2) node [above=30] {\large $3$};
        \draw[bend right,dotted] (2) to (3) node [left=45,below=20] {\large $3$};
        \draw[bend left] (1) to (3) node [above=30] {\large $4$};
	\end{pgfonlayer}
    \end{tikzpicture}
    \label{fig:F111}
    }}%
    \hspace{2cm}
    \subfloat[\centering $\mathcal{F}_{u_2}$]{{
    \begin{tikzpicture} [scale=1]
	\begin{pgfonlayer}{nodelayer}
        \node[] (1) at (2,3.25) {$\begin{pmatrix} 1\\0\\1\\0\end{pmatrix}$};
        \node[] (2) at (0,0) {$\begin{pmatrix} 0\\0\\1\\1\end{pmatrix}$};
        \node[] (3) at (4,0) {$\begin{pmatrix} 0\\1\\0\\0\end{pmatrix}$};
	\end{pgfonlayer}
	\begin{pgfonlayer}{edgelayer}
        \draw[bend right] (1) to (2) node [above=30] {\large $2$};
        \draw[bend right,dotted] (2) to (3) node [left=45,below=20] {\large $3$};
        \draw[bend left] (1) to (3) node [above=30] {\large $3$};
	\end{pgfonlayer}
    \end{tikzpicture}
    \label{fig:F11}
    }}%
    \centering
    \caption{The fibers $\mathcal{F}_{v_3}$ and $\mathcal{F}_{u_2}$ with non-dotted edges corresponding to elements in $M_V'$ and $M'_U$ respectively and the $1$-norm distances between pairs of elements labelled.}
\end{figure}

To see what changes when we look at $M'_U$, we put together the bijections $\ker_\ZZ(U) \rightarrow \ker_\ZZ(T)$ and $\ker_\ZZ(V) \rightarrow \ker_\ZZ(T)$ to create a bijection $\ker_\ZZ(V) \rightarrow \ker_\ZZ(U)$ where $g \mapsto h=\begin{pmatrix} 0&1&-1&-1\end{pmatrix}$. Then, since $U h^+=U h^-=u_2$, $h$ lies in the fiber $\mathcal{F}_{u_2}$.
Indeed, if we apply the move $\begin{pmatrix}1&-1&1&0\end{pmatrix} \in M_U'$ to $h^+$, we move to $\begin{pmatrix}1&0&1&0\end{pmatrix} \in \mathcal{F}_{u_2}$. We see from Figure~\ref{fig:F11} that the distance to $h^-$ is then reduced from $3$ to $2$ and therefore $h$ is distance-reduced by $M'_U$. Also, as before, $h$ is an element of $M_U$ so $h$ is trivially distance-reduced by $M_U$.
\end{example}

\subsection{The poset of signatures} \label{sec:poset of signatures}

In the counterexample above, the distance-reduction information is not preserved between the two toric ideals, not because the structure of the fibers is different, but because elements in the fiber have different 1-norms. However, we can amend this, simply by requiring that the toric ideals are homogeneous and we prove this in Theorem \ref{thm:globaldr}.

We begin by recalling some more technicalities of the theory of bouquets from \cite{petrovic2018bouquet} and \cite{kosta2023strongly}, for completeness and also to set some notation. For a toric ideal $I_A$, we let $B_1,\dots,B_s$ denote the bouquets of $I_A$. Fixing a simple toric ideal $I_T \subseteq k[x_1, \dots ,x_s]$ for some integer matrix $T$ and a signature $\omega \subseteq [s]$, we recall the definition of a $T_{\omega}$-robust ideal. 

\begin{definition}\cite[Definition 2.1]{kosta2023strongly}
    Let $I_T$ be a simple toric ideal and $\omega \subseteq [s]$. A toric ideal $I_A$ is called $T_{\omega}$-robust if 
    \begin{itemize}
        \item the bouquet ideal of $I_A$ is $I_T$ and
        \item $\omega=\{i \in [s] \,|\, B_i\text{ is non-mixed}\}$.
    \end{itemize}
\end{definition}

Note that, if $I_A$ is a $T_{\omega}$-robust ideal, it is not necessarily the case that $T=A_B$. For example, when using the inverse construction with starting matrix $T$, we have that $T=A_B$ up to redundant rows of zeroes. 

\cite[Theorem~1.9]{petrovic2018bouquet} tells us that, letting $\mathbf{c}_i \in \ZZ^n$ be our bouquet-index-encoding vectors for $i \in[s]$, we have a bijection 
$$D:\ker_\ZZ(T) \rightarrow \ker_\ZZ(A),u \mapsto \mathbf{c}_1 u_1 +\dots+ \mathbf{c}_s u_s$$
which preserves the Graver basis and the circuits of $A$. Following the convention in \cite{petrovic2018bouquet}, we set the first non-zero coordinate of the bouquet-index-encoding vectors $\mathbf{c}_i$ to be positive.

By \cite[Remark 1.8]{petrovic2018bouquet}, if $A$ has a free bouquet, say $B_s$, then it is easy to see that
$$\ker_\ZZ(T)=\{(u_1,\dots,u_{s-1},0) \,|\, (u_1,\dots,u_{s-1})  \in \ker_\ZZ(T')\}$$
where $T'$ is the matrix which is the first $s$ columns of $T$. Thus, although there are infinitely many ways we could have defined $\mathbf{c}_s$, the sets $\ker_\ZZ(T)$ and thus $\ker_\ZZ(A)$ are independent of this choice. From here on, we assume $A$ does not have any free bouquets so if $i \not \in \omega$ then $B_i$ is mixed. In this case, each bouquet-index-encoding vector $\mathbf{c}_i$ is uniquely defined. This also allows us to identify signatures with subsets $\omega \subseteq [s]$ of non-mixed bouquets. For example, the signature $\begin{pmatrix}+&-&+\end{pmatrix}$ would correspond to $\omega=\{1,3\}$.

We will now fix a simple toric ideal $I_T$. Then, we take $\omega' \subseteq \omega \subseteq [s]$ and let $I_{A_{\omega}},I_{A_{\omega'}}$ be (not necessarily homogeneous) $T_{\omega},T_{\omega'}$-robust ideals respectively for some integer matrices $A_{\omega} \in \ZZ^{d \times n}$ and $A_{\omega'}\in \ZZ^{d' \times n'}$. For $i \in [s]$, we let $B_i$, $B'_i$ denote the bouquets of $A_{\omega}$, $A_{\omega'}$ and let $\mathbf{c}_i$, $\mathbf{c}'_i$ denote the bouquet-index-encoding vectors respectively. Also, we denote the bijections $D_{\omega}: \ker_\ZZ(T) \rightarrow \ker_\ZZ(A_{\omega})$ and $D_{\omega'}: \ker_\ZZ(T) \rightarrow \ker_\ZZ(A_{\omega'})$.

Now consider a poset containing all the signatures ordered by inclusion. Note that this poset is isomorphic to the Boolean lattice on $s$ elements. For example, taking $s=3$, since $\omega'=\{1\}$ is included in $\omega=\{1,3\}$, we think about the signature $\begin{pmatrix}+&-&-\end{pmatrix}$, corresponding to $\omega'$, as below the signature $\begin{pmatrix}+&-&+\end{pmatrix}$, which corresponds to $\omega$. The whole poset when $s=3$ is visualised in Figure \ref{fig:bool3}.


\begin{figure}[h!] 
    \begin{tikzpicture}[scale=1.5, vertices/.style={draw, fill=none, inner sep=2pt}]
              \node [vertices] (0) at (-0+0,0) {$\begin{matrix}-&-&-\end{matrix}$};
              \node [vertices] (1) at (-1.5+0,1.33333){$\begin{matrix}+&-&-\end{matrix}$};
              \node [vertices] (2) at (-1.5+1.5,1.33333){$\begin{matrix}-&+&-\end{matrix}$};
              \node [vertices] (4) at (-1.5+3,1.33333){$\begin{matrix}-&-&+\end{matrix}$};
              \node [vertices] (3) at (-1.5+0,2.66667){$\begin{matrix}+&+&-\end{matrix}$};
              \node [vertices] (5) at (-1.5+1.5,2.66667){$\begin{matrix}+&-&+\end{matrix}$};
              \node [vertices] (6) at (-1.5+3,2.66667){$\begin{matrix}-&+&+\end{matrix}$};
              \node [vertices] (7) at (-0+0,4){$\begin{matrix}+&+&+\end{matrix}$};
      \foreach \to/\from in {0/1, 2/3, 0/2, 1/3, 4/5, 6/7, 4/6, 5/7, 0/4, 1/5, 2/6, 3/7}
      \draw [-] (\to)--(\from);
      \end{tikzpicture}
    \centering
    \caption{This Figure displays the Hasse diagram of the poset of signatures when $s=3$.} 
    \label{fig:bool3}
\end{figure}

To each signature with corresponding set of non-mixed bouquets $\omega$, we associate the set of $T_{\omega}$-robust ideals. Thus, $A_{\omega}$ lies at the signature corresponding to $\omega$ and similarly with $A_{\omega'}$. In fact, even if $\omega$ and $\omega'$ are different, the kernels of $A_{\omega}$ and $A_{\omega'}$ are in bijection. Indeed, since $D_{\omega}$ is a bijection, we have the explicit bijection $D_{\omega'} \circ D_{\omega}^{-1}:\ker_\ZZ(A_{\omega}) \rightarrow \ker_\ZZ(A_{\omega'})$. Furthermore, by \cite[Theorem 2.2]{kosta2023strongly}, if we take two $T_{\omega}$-robust ideals $A_1$ and $A_2$, then $D_{\omega'} \circ D_{\omega}^{-1}$ induces a bijection between the minimal Markov bases of $A_1$ and $A_2$. However, this bijection does not necessarily hold between toric ideals lying at different locations in the poset i.e., there is generally no bijection between the minimal Markov bases of $A_{\omega}$ and $A_{\omega'}$ if $\omega \neq \omega'$. 

Since elements of the kernel are preserved anywhere in the poset, we can then ask what properties between those elements are preserved as we move around the poset. It turns out that some important properties are preserved if we move up and down the poset with respect to the order induced by inclusion. Recall that we let $\omega' \subseteq \omega$. Thus, moving from $\omega'$ to $\omega$ (from $\ker_\ZZ(A_{\omega'})$ to $\ker_\ZZ(A_{\omega})$) corresponds to moving \textbf{up} the poset and moving from $\omega$ to $\omega'$ (from $\ker_\ZZ(A_{\omega})$ to $\ker_\ZZ(A_{\omega'})$) corresponds to moving \textbf{down} the poset. 

Now, Lemma \ref{lem:globaldr1} will tell us that, for $\hat z,z \in \ker_\ZZ(A_{\omega'})$, if $\hat z^+ \le z^+$, then $(D_{\omega} \circ D_{\omega'}^{-1}(\hat z))^+ \le (D_{\omega} \circ D_{\omega'}^{-1}( z))^+$ ie. as we move \textbf{up} the poset, this relation on the positive parts of two vectors is preserved. In fact, this is one of the two conditions for $\hat z$ to distance-reduce $z$ by Proposition \ref{prop:drredef} so we note that this first condition holds as we move \textbf{up} the poset. Similarly, Lemma \ref{lem:globaldr2} will us that, if $A_\omega$ and $A_{\omega'}$ are homogeneous, for $\hat z,z \in \ker_\ZZ(A_{\omega})$, if $\operatorname{Supp}(\hat z^-) \cap \operatorname{Supp}(z^-) \neq \emptyset$ then $\operatorname{Supp}((D_{\omega'}\circ D_{\omega})(\hat z)^-) \cap \operatorname{Supp}((D_{\omega'}\circ D_{\omega})(z)^-) \neq \emptyset$ ie. as we move \textbf{down} the poset, the non-emptiness of the supports is preserved. This is the second condition for $\hat z$ to distance-reduce $z$ by Proposition \ref{prop:drredef} so, in opposition to the first condition, this second condition holds as we move \textbf{down} the poset. Although, the two conditions for distance reduction are, in some sense, working against each other, if we fix the signature ie. take two $T_{\omega}$-robust ideals $A_1$ and $A_2$ which are both homogeneous then, using Proposition~\ref{prop:drredef}, it follows quite quickly in Theorem \ref{thm:globaldr} that all homogeneous $T_{\omega}$-robust ideals have the same distance-reducing properties.

We will now prove Lemmas \ref{lem:globaldr1} and \ref{lem:globaldr2} and then Theorem \ref{thm:globaldr}.

\begin{remark}
    We will repeatedly use the fact that the index-encoding vectors have disjoint supports, i.e., the set $\{\operatorname{Supp}(\mathbf{c}_i) \,|\, i \in [s]\}$ is a partition of $[n]$. In particular, for each $k \in [n]$, there is a unique $i \in [s]$ such that $(\mathbf{c}_i)_k \neq 0$. Then unpacking the definition of $D_{\omega}$ for some $z \in \ker_\ZZ(T)$, 
\begin{equation} \label{eq:cidisj}
    (D(z))_k=\left(\sum_{j=1}^s z_j \mathbf{c}_j \right)_k=\sum_{j=1}^s z_j (\mathbf{c}_j)_k=z_i (\mathbf{c}_i)_k.
\end{equation}
We will use Equation (\ref{eq:cidisj}) so often we will not reference back to it.
\end{remark}

\begin{lemma} \label{lem:globaldr1}
    For $\hat y,y \in \ker_\ZZ(T)$, $D_{\omega'}(\hat y)^+ \le D_{\omega'}(y)^+$ if and only if $D_{\omega}(\hat y)^+ \le D_{\omega}(y)^+$ and, for every $i \in \operatorname{Supp}(\hat y^-) \backslash \omega'$, we have $y_i \le \hat y_i$.
\end{lemma}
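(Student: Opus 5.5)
The proof is a coordinatewise computation. The key point is that the partial order $\le$ on positive parts splits over the partition $\{\operatorname{Supp}(\mathbf{c}_i)\}_{i\in[s]}$ of the columns. By Equation~(\ref{eq:cidisj}), for $k\in\operatorname{Supp}(\mathbf{c}_i)$ we have $D(\hat y)_k=\hat y_i(\mathbf{c}_i)_k$ and $D(y)_k=y_i(\mathbf{c}_i)_k$. Hence $D(\hat y)^+\le D(y)^+$ holds if and only if, for every $i\in[s]$ and every $k\in\operatorname{Supp}(\mathbf{c}_i)$,
\[
\bigl(\hat y_i(\mathbf{c}_i)_k\bigr)^+\le\bigl(y_i(\mathbf{c}_i)_k\bigr)^+ .
\]
Dividing by $|(\mathbf{c}_i)_k|>0$, each such inequality becomes one of two conditions on the coordinates $\hat y_i,y_i$ of $\ker_\ZZ(T)$, depending only on the sign of $(\mathbf{c}_i)_k$:
\[
\max(\hat y_i,0)\le\max(y_i,0) \quad \text{if } (\mathbf{c}_i)_k>0,
\qquad
\max(-\hat y_i,0)\le\max(-y_i,0) \quad \text{if } (\mathbf{c}_i)_k<0 .
\]

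Next I will use the sign structure of the bouquet-index-encoding vectors. We assumed there are no free bouquets and normalised the first nonzero entry of each $\mathbf{c}_i$ to be positive. With this convention, for a non-mixed bouquet ($i\in\omega$) all nonzero entries of $\mathbf{c}_i$ are positive. For a mixed bouquet ($i\notin\omega$) the vector $\mathbf{c}_i$ has entries of both signs. I will quote this from the definitions in \cite{petrovic2018bouquet}; it is the only external input, and I expect it to be the one point needing care, since the whole argument rests on reading off the signs correctly.

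For each index $i$ define the condition $P_i$ as $\max(\hat y_i,0)\le\max(y_i,0)$, that is, $\hat y_i>0\Rightarrow y_i\ge\hat y_i$. Define $N_i$ as $\hat y_i<0\Rightarrow y_i\le\hat y_i$. Then $D_\omega(\hat y)^+\le D_\omega(y)^+$ is equivalent to $P_i$ for all $i\in\omega$ together with $P_i\wedge N_i$ for all $i\notin\omega$. In the same way, $D_{\omega'}(\hat y)^+\le D_{\omega'}(y)^+$ is equivalent to $P_i$ for all $i\in\omega'$ together with $P_i\wedge N_i$ for all $i\notin\omega'$. The extra clause in the statement, that $y_i\le\hat y_i$ for all $i\in\operatorname{Supp}(\hat y^-)\setminus\omega'$, is exactly $N_i$ for every $i\notin\omega'$.

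Both directions now follow by comparing these lists. For the forward direction, the $\omega'$-conditions give every $P_i$, and they give $N_i$ for every $i\notin\omega'$, which in particular covers all $i\notin\omega$ since $\omega'\subseteq\omega$. This yields both the $\omega$-condition and the extra clause. For the converse, the $\omega$-condition supplies all $P_i$, and the extra clause supplies $N_i$ for each $i\notin\omega'$. Together these give the $\omega'$-condition. The new cases are precisely the indices $i\in\omega\setminus\omega'$, and for these the extra clause supplies exactly the missing $N_i$.
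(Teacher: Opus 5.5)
Your proposal is correct and follows essentially the same route as the paper: both use Equation~(\ref{eq:cidisj}) to reduce to coordinatewise comparisons, together with the fact that under the normalisation non-mixed bouquet vectors have only positive entries while mixed ones have entries of both signs. Your per-bouquet conditions $P_i$ and $N_i$ simply make explicit the case split on the sign of $(\mathbf{c}_i)_k$ that the paper carries out one coordinate at a time.
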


\begin{proof}
    For the ($\Rightarrow$) direction, we assume $D_{\omega'}(\hat y)^+ \le D_{\omega'}(y)^+$ and start by showing that $D_{\omega}(\hat y)^+ \le D_{\omega}(y)^+$. Fix $k \in [n]$ and take the unique $i \in [n]$ such that $(\mathbf{c}_i)_k \neq 0$. It is enough to show that, if $D_{\omega}(\hat y)_k>0$, then $D_{\omega}(\hat y)_k \le D_{\omega}(y)_k$. So assume $D_{\omega}(\hat y)_k>0$. Then, $\hat y_i(\mathbf{c}_i)_k=D_{\omega}(\hat y)_k>0$ and we now split into two cases depending on the sign of $(\mathbf{c}_i)_k$.
    \begin{itemize}
        \item If $(\mathbf{c}_i)_k>0$ then $\hat y_i>0$. We can always find $k' \in [n']$ such that $(\mathbf{c}'_i)_{k'}>0$ and thus $D_{\omega'}(\hat y)_{k'}=\hat y_i ( \mathbf{c}'_i)_{k'}>0$. Since $D_{\omega'}(\hat y)^+ \le D_{\omega'}(y)^+$ then $\hat y_i(\mathbf{c}'_i)_{k'}=D_{\omega'}(\hat y)_{k'} \le D_{\omega'}(y)_{k'}=y_i (\mathbf{c}_i)_{k'}$. Since $(\mathbf{c}'_i)_{k'}>0$ then $\hat y_i \le y_i$ and, since$(\mathbf{c}_i)_k>0$, then $D_{\omega}(\hat y)_k=\hat y_i (\mathbf{c}_i)_k \le y_i (\mathbf{c}_i)_k= D_{\omega}(y)_k$.
        \item If $(\mathbf{c}_i)_k<0$ then $\hat y_i<0$ and $i \not \in \omega$. Since $\omega' \subseteq \omega$ then $i \not\in \omega'$ as well so $B'_i$ is mixed. Thus, we can find $k' \in [n']$ such that $(\mathbf{c}'_i)_{k'}<0$ and thus $D_{\omega'}(\hat y)_{k'}=\hat y_i ( \mathbf{c}'_i)_{k'}>0$. Since $D_{\omega'}(\hat y)^+ \le D_{\omega'}(y)^+$ then $\hat y_i(\mathbf{c}'_i)_{k'}=D_{\omega'}(\hat y)_{k'} \le D_{\omega'}(y)_{k'}=y_i (\mathbf{c}_i)_{k'}$. Since $(\mathbf{c}'_i)_{k'}<0$ then $\hat y_i \ge y_i$ and, since $(\mathbf{c}_i)_k<0$, then $D_{\omega}(\hat y)_k=\hat y_i (\mathbf{c}_i)_k \le y_i (\mathbf{c}_i)_k= D_{\omega}(y)_k$.
    \end{itemize}

Finally, we show that for some $ i \in \operatorname{Supp}(\hat y^-) \backslash \omega'$ we have $y_i \le \hat y_i$. Since $i \not\in \omega'$ then $B'_i$ is mixed and thus we can find $k' \in [n']$ such that $(\mathbf{c}'_i)_{k'}<0$. Also, since $i \in \operatorname{Supp}(\hat y^-)$ then $\hat y_i<0$ so $D_{\omega'}(\hat y)_{k'}=\hat y_i (\mathbf{c}'_i)_{k'}>0$. Since $D_{\omega'}(\hat y)^+ \le D_{\omega'}(y)^+$, then $\hat y_i(\mathbf{c}'_i)_{k'}=D_{\omega'}(\hat y)_{k'} \le D_{\omega'}(y)_{k'}=y_i (\mathbf{c}'_i)_{k'}$ and, since $(\mathbf{c}'_i)_{k'}<0$ then $y_i \le \hat y_i$.

For the ($\Leftarrow$) direction, we assume $D_{\omega}(\hat y)^+ \le D_{\omega}(y)^+$ and $\forall i \in \operatorname{Supp}(\hat y^-) \backslash \omega'$ we have $y_i \le \hat y_i$.  Fix $k' \in [n']$ and take the unique $i \in [s]$ such that $(\mathbf{c}'_i)_{k'} \neq 0$. Since we want to show that $D_{\omega'}(\hat y)^+ \le D_{\omega'}(y)^+$, it is enough to show that, if $D_{\omega'}(\hat y)_{k'}>0$ then $D_{\omega'}(\hat y)_{k'} \le D_{\omega'}(y)_{k'}$. So assume $D_{\omega'}(\hat y)_{k'}>0$. Then, $\hat y_i(\mathbf{c}'_i)_{k'}=D_{\omega'}(\hat y)_{k'}>0$ and we now split into two cases depending on the sign of $(\mathbf{c}'_i)_{k'}$.
    \begin{itemize}
        \item If $(\mathbf{c}'_i)_{k'}>0$ then $\hat y_i>0$. We can always find $k \in [n]$ such that $(\mathbf{c}_i)_{k}>0$ and thus $D_{\omega}(\hat y)_{k}=\hat y_i ( \mathbf{c}_i)_{k}>0$. Since $D_{\omega}(\hat y)^+ \le D_{\omega}(y)^+$ then $\hat y_i(\mathbf{c}_i)_{k}=D_{\omega}(\hat y)_{k} \le D_{\omega}(y)_{k}=y_i (\mathbf{c}_i)_{k}$. Since $(\mathbf{c}_i)_{k}>0$ then $\hat y_i \le y_i$ and, since$(\mathbf{c}'_i)_{k'}>0$, then $D_{\omega'}(\hat y)_{k'}=\hat y_i (\mathbf{c}'_i)_{k'} \le y_i (\mathbf{c}'_i)_{k'}= D_{\omega'}(y)_{k'}$.
        \item If $(\mathbf{c}'_i)_{k'}<0$ then $\hat y_i<0$ and $i \not \in \omega'$. Thus, $i \in \operatorname{Supp}(\hat y^-) \backslash \omega'$ and so, by assumption, we have $y_i \le \hat y_i$. Since$(\mathbf{c}'_i)_{k'}<0$, then $D_{\omega'}(\hat y)_{k'}=\hat y_i (\mathbf{c}'_i)_{k'} \le y_i (\mathbf{c}'_i)_{k'}= D_{\omega'}(y)_{k'}$.
    \end{itemize}
\end{proof}

\begin{remark}
    Lemma \ref{lem:globaldr1} implies immediately, that, if $D_{\omega'}(\hat y)^+ \le D_{\omega'}(y)^+$ then $D_{\omega}(\hat y)^+ \le D_{\omega}(y)^+$. In the language of semi-conformal decompositions, letting $\bar y=y-\hat y$, this means $D_{\omega'}(y)=D_{\omega'}(\hat y)+_{sc}D_{\omega'}(\bar y)$ $\Rightarrow$ $D_{\omega}(y)=D_{\omega}(\hat y)+_{sc}D_{\omega}(\bar y)$. Thus, we can now see that, if $D_{\omega}(y)$ is indispensable (has no semi-conformal decompositions), then $D_{\omega'}(y)$ is also indispensable. In the language of \cite[Section 3]{kosta2023strongly}, this is saying $D_{\omega}(y) \in S_{\omega}(T)$ implies $D_{\omega'}(y) \in S_{\omega'}(T)$. Thus we see that Lemma \ref{lem:globaldr1} implies \cite[Proposition 3.3]{kosta2023strongly}.
\end{remark}

\begin{remark}
    By \cite[Corollary 4.4]{petrovic2018bouquet}, all $T_{\emptyset}$-robust ideals are strongly robust, meaning that no element has a non-trivial semi-conformal decomposition. $T_{\emptyset}$-robust ideals lie at the bottom of the poset of signatures and this aligns with Lemma \ref{lem:globaldr1} which tells us that semi-conformal decompositions become harder and harder as we move down the poset until we get to the bottom and there are no non-trivial semi-conformal decompositions.
\end{remark}

\begin{corollary}\label{cor:globaldr1}
    For $\omega' \subseteq [s]$ and $\hat y, y \in \ker_\ZZ(T)$ then, if $D_{\omega'}(\hat y)^+ \le D_{\omega'}(y)$, then $\hat y^+ \le y^+$.
\end{corollary}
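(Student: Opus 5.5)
The plan is to apply Lemma~\ref{lem:globaldr1} with $\omega'$ as given and $\omega=[s]$, the top of the poset of signatures. There is a notational point to settle first. The statement literally says $D_{\omega'}(\hat y)^+ \le D_{\omega'}(y)$, but $D_{\omega'}(\hat y)^+\ge 0$ forces this to be the same as $D_{\omega'}(\hat y)^+ \le D_{\omega'}(y)^+$. Indeed, comparing a nonnegative vector with $D_{\omega'}(y)$ coordinatewise is equivalent to comparing it with the positive part of $D_{\omega'}(y)$. So we may work with positive parts throughout.

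The natural $T_{[s]}$-robust model is $T$ itself with bouquet-index-encoding vectors $\mathbf{c}_i=e_i$ (all bouquets singletons, all non-mixed). In that case $D_{[s]}$ is the identity on $\ker_\ZZ(T)$. Lemma~\ref{lem:globaldr1} then gives
\[
D_{[s]}(\hat y)^+\le D_{[s]}(y)^+ ,\quad\text{that is,}\quad \hat y^+\le y^+ ,
\]
which is exactly the claim.

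Two points need care. First, Lemma~\ref{lem:globaldr1} was stated for robust ideals $A_\omega$ that are assumed to exist. So I must check that $T$ is indeed $T_{[s]}$-robust, i.e.\ that its bouquets are the singletons and all of them are non-mixed. This is where simplicity of $I_T$ is used: simple means that each bouquet of $T$ is a single column. A single-column bouquet has encoding vector $e_i$, which has no negative entry, so that bouquet is non-mixed.

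Second, I can avoid any reliance on that identification by rerunning the relevant part of the proof of Lemma~\ref{lem:globaldr1} directly on coordinates of $\ker_\ZZ(T)$. Fix $i$ with $\hat y_i>0$. Choose $k'$ with $(\mathbf{c}'_i)_{k'}>0$; this exists because the first nonzero coordinate of $\mathbf{c}'_i$ is positive by convention. Then $D_{\omega'}(\hat y)_{k'}=\hat y_i(\mathbf{c}'_i)_{k'}>0$. The hypothesis gives $\hat y_i(\mathbf{c}'_i)_{k'}\le y_i(\mathbf{c}'_i)_{k'}$, and dividing by the positive number $(\mathbf{c}'_i)_{k'}$ yields $\hat y_i\le y_i$. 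For $i$ with $\hat y_i\le 0$ we have $(\hat y^+)_i=0\le (y^+)_i$ trivially. Together these give $\hat y^+\le y^+$.

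The main obstacle is therefore only bookkeeping: justifying that $T$ sits at the top of the poset, or sidestepping this with the direct coordinate argument above. I would present the direct argument, since it is self-contained, and remark that it is the case $\omega=[s]$ of Lemma~\ref{lem:globaldr1}.
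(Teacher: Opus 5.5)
Your proposal is correct and is essentially the paper's proof: the paper also applies the $(\Rightarrow)$ direction of Lemma~\ref{lem:globaldr1} with $\omega=[s]$, $A_\omega=T$ and $D_{[s]}$ the identity. Your direct coordinate argument is just the first case of that lemma's proof specialised to $\mathbf{c}_i=e_i$.

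One small correction: $D_{\omega'}(\hat y)^+ \le D_{\omega'}(y)$ and $D_{\omega'}(\hat y)^+ \le D_{\omega'}(y)^+$ are not equivalent. The former is strictly stronger; it forces $D_{\omega'}(y)\ge 0$ and hence $y=0$. You only need the implication from the former to the latter, and that holds because $D_{\omega'}(y)\le D_{\omega'}(y)^+$.
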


\begin{proof}
    We note that $I_T$ itself is a $T_{[n]}$-robust ideal and, in this case, $D_{[n]}$ is the identity. Thus, this result follows from the ($\Rightarrow$) direction of Lemma \ref{lem:globaldr1} taking $\omega=[s]$ and $A_{\omega}$ to be $T$.
\end{proof}

\begin{lemma} \label{lem:globaldr2}
    For $\hat y,y \in \ker_\ZZ(T)$, if $\operatorname{Supp}(D_{\omega}(\hat y)^-) \cap \operatorname{Supp}(D_{\omega}(y)^-) \neq \emptyset$ then $\operatorname{Supp}(D_{\omega'}(\hat y)^-) \cap \operatorname{Supp}(D_{\omega'}(y)^-) \neq \emptyset$.
\end{lemma}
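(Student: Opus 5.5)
The plan is to work coordinatewise through the bijections $D_\omega$ and $D_{\omega'}$, using Equation~(\ref{eq:cidisj}) together with the sign information carried by the bouquet-index-encoding vectors. The idea is that a common negative coordinate at level $\omega$ is detected by a single bouquet index $i$ on which $\hat y_i$ and $y_i$ have the same sign. We then transport this to level $\omega'$ by picking a suitable coordinate in the bouquet $B'_i$. I do not expect homogeneity to be needed for this direction.

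\emph{Step 1: find a common bouquet index.} Take $k \in \operatorname{Supp}(D_{\omega}(\hat y)^-) \cap \operatorname{Supp}(D_{\omega}(y)^-)$, and let $i \in [s]$ be the unique index with $(\mathbf{c}_i)_k \neq 0$. By Equation~(\ref{eq:cidisj}),
\[
\hat y_i (\mathbf{c}_i)_k < 0 \quad\text{and}\quad y_i(\mathbf{c}_i)_k < 0 .
\]
Hence $\hat y_i$ and $y_i$ are both nonzero and have the same sign, namely the sign opposite to $(\mathbf{c}_i)_k$.

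\emph{Step 2: transport to level $\omega'$.} It suffices to find $k' \in [n']$ with $\hat y_i(\mathbf{c}'_i)_{k'} < 0$. Since $\hat y_i$ and $y_i$ share a sign, we then automatically get $y_i(\mathbf{c}'_i)_{k'}<0$, and $k'$ lies in both negative supports at level $\omega'$. We split on the sign of $\hat y_i$.
\begin{itemize}
\item If $\hat y_i < 0$, we need $(\mathbf{c}'_i)_{k'} > 0$. Such a $k'$ always exists, because by convention the first nonzero coordinate of $\mathbf{c}'_i$ is positive.
\item If $\hat y_i > 0$, then $(\mathbf{c}_i)_k < 0$. So $\mathbf{c}_i$ has entries of both signs and $B_i$ is mixed, which gives $i \notin \omega$. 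Since $\omega' \subseteq \omega$, also $i \notin \omega'$, so $B'_i$ is mixed. Hence $\mathbf{c}'_i$ has a negative coordinate $(\mathbf{c}'_i)_{k'}<0$, and this is the required $k'$.
\end{itemize}

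The only delicate point is the second case, which is where the direction of the poset matters. It is exactly the inclusion $\omega' \subseteq \omega$ that lets a mixed bouquet upstairs remain mixed downstairs. We must also use the standing assumption that there are no free bouquets, so that $i \notin \omega'$ genuinely means mixed. Everything else is a direct application of Equation~(\ref{eq:cidisj}).
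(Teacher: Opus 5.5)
Your proof is correct and follows the paper's argument essentially step for step. You pick a common negative coordinate $k$ and pass to the unique bouquet index $i$. You then choose $k'$ with either a positive entry of $\mathbf{c}'_i$, or, when $B_i$ is forced to be mixed, a negative entry of $\mathbf{c}'_i$ obtained from $\omega'\subseteq\omega$. The only cosmetic difference is that you split cases on the sign of $\hat y_i$ rather than on the sign of $(\mathbf{c}_i)_k$, which is equivalent; you are also right that homogeneity plays no role in this lemma.
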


\begin{proof}
    Since $\operatorname{Supp}(D_{\omega}(\hat y)^-) \cap \operatorname{Supp}(D_{\omega}(y)^-) \neq \emptyset$, we can find $k \in \operatorname{Supp}(D_{\omega}(\hat y)^-) \cap \operatorname{Supp}(D_{\omega}(y)^-)$ and take the unique $i \in [n]$ such that $(\mathbf{c}_i)_k \neq 0$. Now notice that $\hat y_i (\mathbf{c}_i)_k=D_{\omega}(\hat y)_k<0$ and $ y_i (\mathbf{c}_i)_k=D_{\omega}(y)_k<0$ and split into two cases depending on the sign of $(\mathbf{c}_i)_k$.
    \begin{itemize}
        \item If $(\mathbf{c}_i)_k>0$ then $\hat y_i<0$ and $y_i<0$. We can always find $k' \in [n]$ such that $(\mathbf{c}'_i)_{k'}>0$ and then $D_{\omega'}(\hat y)_{k'}=\hat y_i (\mathbf{c}'_i)_{k'}<0$ and $D_{\omega'}(y)_{k'}=y_i (\mathbf{c}'_i)_{k'}<0$. This means $k' \in \operatorname{Supp}(D_{\omega'}(\hat y)^-) \cap \operatorname{Supp}(D_{\omega'}(y)^-) $ and so $\operatorname{Supp}(D_{\omega'}(\hat y)^-) \cap \operatorname{Supp}(D_{\omega'}(y)^-) \neq \emptyset$.
        \item If $(\mathbf{c}_i)_k<0$ then $\hat y_i>0$ and $y_i>0$ and also $i \not\in \omega$. Since $\omega' \subseteq \omega$ then $i \not\in \omega'$ as well so $B'_i$ is mixed. Thus, we can find $k' \in [n]$ such that $(\mathbf{c}'_i)_{k'}<0$ and then $D_{\omega'}(\hat y)_{k'}=\hat y_i (\mathbf{c}'_i)_{k'}<0$ and $D_{\omega'}(y)_{k'}=y_i (\mathbf{c}'_i)_{k'}<0$. This means $k' \in \operatorname{Supp}(D_{\omega'}(\hat y)^-) \cap \operatorname{Supp}(D_{\omega'}(y)^-) $ and so $\operatorname{Supp}(D_{\omega'}(\hat y)^-) \cap \operatorname{Supp}(D_{\omega'}(y)^-) \neq \emptyset$.
    \end{itemize}
\end{proof}


Now let $I_{A_1},I_{A_2}$ be two homogeneous $T_{\omega}$-robust ideals for some $\omega \subseteq [s]$. As before just with slightly different notation, we define bijections $D_1:\ker_\ZZ(T) \rightarrow \ker_\ZZ(A_1)$ and $D_2:\ker_\ZZ(T) \rightarrow \ker_\ZZ(A_2)$. We can then define the natural bijection 
$$\phi:=D_2 \circ D_1^{-1}:\ker_\ZZ(A_1) \rightarrow \ker_\ZZ(A_2).$$
We have already seen that, by \cite[Theorem 1.11]{petrovic2018bouquet}, $\phi$ preserves the Graver basis and the circuits between $A_1$ and $A_2$ and, by \cite[Theorem 2.2]{kosta2023strongly}, $\phi$ also preserves the minimal Markov bases between $A_1$ and $A_2$. We now show that, with the homogeneity condition on $I_{A_1}$ and $I_{A_2}$ then $\phi$ also preserves the distance-reducing properties between $A_1$ and $A_2$. Since $\phi$ is a bijection, this means that $A_1$ and $A_2$ have exactly the same distance-reducing properties.

\begin{theorem}\label{thm:globaldr}
    Let $I_{A_1},I_{A_2}$ be homogeneous $T_{\omega}$-robust ideals for some $\omega \subseteq [s]$. Then, $I_{A_1}$ and $I_{A_2}$ have the same distance-reducing properties. Namely, for $\hat z,z \in \ker_\ZZ(A_1)$, $z$ is distance-reduced by $\hat z$ if and only if $\phi(z)$ is distance-reduced by $\phi(\hat z)$.
\end{theorem}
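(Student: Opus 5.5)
Since both $I_{A_1}$ and $I_{A_2}$ are homogeneous, Proposition~\ref{prop:drredef} reduces the claim to two statements about $\hat y := D_1^{-1}(\hat z)$ and $y := D_1^{-1}(z)$ in $\ker_\ZZ(T)$. We must show, up to signs, that $D_1(\hat y)^+ \le D_1(y)^+$ holds if and only if $D_2(\hat y)^+ \le D_2(y)^+$. We must also show that $\operatorname{Supp}(D_1(\hat y)^-)\cap\operatorname{Supp}(D_1(y)^-)\neq\emptyset$ holds if and only if the same is true for $D_2$. Since $D_1,D_2$ are linear, $D_j(-y)=-D_j(y)$. So a choice of signs for $(\hat z, z)$ corresponds to the same choice of signs for $(\phi(\hat z),\phi(z))$, and it suffices to prove the two equivalences for fixed signs.

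Both $A_1$ and $A_2$ sit at the same vertex $\omega$ of the poset of signatures. We may therefore apply Lemmas~\ref{lem:globaldr1} and~\ref{lem:globaldr2} with $\omega'=\omega$, letting $A_1$ play the role of $A_\omega$ and $A_2$ that of $A_{\omega'}$, and then with the roles swapped. Nothing in the proofs of those lemmas uses that the two matrices differ, only that $\omega'\subseteq\omega$, which holds trivially.

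For the divisibility condition, Lemma~\ref{lem:globaldr1} gives the following. If $D_2(\hat y)^+\le D_2(y)^+$, then $D_1(\hat y)^+\le D_1(y)^+$. This is the ($\Rightarrow$) direction, and we can drop the extra conclusion about $i\in\operatorname{Supp}(\hat y^-)\setminus\omega$. Swapping the roles of $A_1$ and $A_2$ gives the converse. For the support condition, Lemma~\ref{lem:globaldr2} with $\omega'=\omega$ shows that a common index in the negative supports for $D_1$ produces one for $D_2$, and swapping roles gives the converse. Note that Lemma~\ref{lem:globaldr2} needs no homogeneity: homogeneity enters only through Proposition~\ref{prop:drredef}, which replaces the norm inequality by the support-overlap condition.

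Combining these, $z$ is distance-reduced by $\hat z$ exactly when, for some sign choice, both conditions of Proposition~\ref{prop:drredef} hold for $(D_1(\hat y),D_1(y))$. By the two equivalences this happens exactly when they hold for $(D_2(\hat y),D_2(y))=(\phi(\hat z),\phi(z))$. The main point to check carefully is that Lemma~\ref{lem:globaldr1} applies when $\omega'=\omega$ and the matrices are arbitrary representatives with the same $T$. One must also check that the standing assumption of no free bouquets is harmless here. If free bouquets were present, the corresponding coordinates of every kernel element vanish, by the remark following \cite[Remark 1.8]{petrovic2018bouquet}, so they contribute nothing to either condition.
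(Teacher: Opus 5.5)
Your proposal is correct and follows essentially the same route as the paper: reduce via Proposition~\ref{prop:drredef} to the divisibility and support-overlap conditions, then transfer both through $\ker_\ZZ(T)$ using Lemmas~\ref{lem:globaldr1} and~\ref{lem:globaldr2} with $\omega'=\omega$. The only difference is cosmetic: the paper proves one implication and appeals to the symmetry of $A_1,A_2$, whereas you obtain each equivalence by swapping roles in the lemmas, and you assign the roles in Lemma~\ref{lem:globaldr2} in the direction the lemma actually states.
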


\begin{proof}
     It is enough to show that, if $z$ is distance-reduced by $\hat z$, then $\phi(z)$ is distance-reduced by $\phi(\hat z)$ since $A_1$ and $A_2$ are symmetric. This means we can swap $A_1,A_2$ and use $\phi^{-1} := D_1 \circ D_2^{-1}:\ker_\ZZ(A_2) \rightarrow \ker_\ZZ(A_1)$ to prove the other direction. Indeed, we see that, if $\phi(z)$ is distance-reduced by $\phi(\hat z)$, then $z=\phi^{-1} (\phi(z))$ is distance-reduced by $\hat z=\phi^{-1} (\phi(\hat z))$.

So assume $z$ is distance-reduced by $\hat z$. By Proposition \ref{prop:drredef}, then, up to sign, $\hat z^+ \le z^+$ and $\operatorname{Supp}(\hat z^-) \cap \operatorname{Supp}(z^-) \neq \emptyset$. Let $\hat y=D_1^{-1}(\hat z)$ and $y=D_1^{-1}(z)$ then notice that $D_2(\hat y)=D_2 \circ D_1^{-1}(\hat z)=\phi(\hat z)$ and $D_2(y)=D_2 \circ D_1^{-1}(z)=\phi(z)$. By Lemma \ref{lem:globaldr1} with $\omega'=\omega$, $A_{\omega'}=A_1$ and $A_{\omega}=A_2$, since $ D_1(\hat y)^+=\hat z^+ \le z^+=D_1(y)^+$ then $\phi(\hat z)^+=D_2(\hat y)^+ \le D_2(y)^+=\phi(z)^+$. In addition, by Lemma \ref{lem:globaldr2} again with $\omega'=\omega$, $A_{\omega'}=A_1$ and $A_{\omega}=A_2$, since $\operatorname{Supp}(\hat z^-) \cap \operatorname{Supp}(z^-)=\operatorname{Supp}(D_1(\hat y)^-) \cap \operatorname{Supp}(D_1(y)^-) \neq \emptyset$ then $\operatorname{Supp}(\phi(\hat z)^-) \cap \operatorname{Supp}(\phi(z)^-)=\operatorname{Supp}(D_2(\hat y)^-) \cap \operatorname{Supp}(D_2(y)^-) \neq \emptyset$. Finally, using Proposition \ref{prop:drredef} in the reverse direction, we see that $\phi(z)$ is distance-reduced by $\phi(\hat z)$.
\end{proof}

\begin{corollary}
    Let $M$ be a distance-reducing minimal Markov basis for $A_1$, then $\phi(M):=\{\phi(m) \,|\, m \in M\}$ is a distance-reducing minimal Markov basis for $A_2$.
\end{corollary}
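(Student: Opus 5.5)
Two facts carry the proof: $\phi(M)$ is a minimal Markov basis for $A_2$, and Theorem~\ref{thm:globaldr} transfers distance reduction along $\phi$.

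\emph{Step 1: $\phi(M)$ is a minimal Markov basis.} By \cite[Theorem 2.2]{kosta2023strongly}, recalled just before Theorem~\ref{thm:globaldr}, the bijection $\phi = D_2\circ D_1^{-1}$ induces a bijection between the minimal Markov bases of the two $T_\omega$-robust ideals $I_{A_1}$ and $I_{A_2}$. Hence $\phi(M)$ is a minimal Markov basis for $A_2$. The only subtlety is that minimal Markov bases are taken up to sign. This causes no trouble because $\phi$ is linear, so $\phi(-m) = -\phi(m)$.

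\emph{Step 2: $\phi(M)$ is distance-reducing.} Being distance-reducing means that every nonzero $z' \in \ker_\ZZ(A_2)$ is distance-reduced by some element of $\phi(M)$. Equivalently, by \cite[Theorem 7.4]{CK_2024}, it suffices that every element of $\operatorname{Gr}(A_2)$ is distance-reduced by some element of $\phi(M)$. I would argue with the whole kernel, which avoids needing the Graver-basis reduction at all.

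So take $z' \in \ker_\ZZ(A_2)$ and set $z = \phi^{-1}(z') \in \ker_\ZZ(A_1)$. Since $M$ is distance-reducing, there is some $m \in M$ that distance-reduces $z$. By Theorem~\ref{thm:globaldr}, $\phi(z) = z'$ is then distance-reduced by $\phi(m) \in \phi(M)$. If one prefers the Graver formulation, note that $\phi$ maps $\operatorname{Gr}(A_1)$ bijectively onto $\operatorname{Gr}(A_2)$ by \cite[Theorem 1.11]{petrovic2018bouquet}, and the same one-line argument applies to Graver elements.

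\emph{Possible obstacle.} There is essentially none; the result follows directly from Theorem~\ref{thm:globaldr}. The one point to check is that the phrase ``up to the signs of $\hat z$ and $z$'' in Definition~\ref{def:drbouquet} is respected by $\phi$. It is, because $\phi$ is linear and therefore commutes with negation.
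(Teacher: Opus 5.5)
Your proposal is correct and matches the paper's proof essentially step for step. Minimality of $\phi(M)$ comes from \cite[Theorem 2.2]{kosta2023strongly}. Distance reduction is transferred by pulling an arbitrary kernel element of $A_2$ back via $\phi^{-1}$, reducing it by some $m\in M$, and applying Theorem~\ref{thm:globaldr} to conclude that $\phi(m)$ reduces the original element.
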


\begin{proof}
    Theorem~2.2 from \cite{kosta2023strongly} tells us that, if $M$ is a minimal Markov basis of $A_1$, then $\phi(M)$ is a minimal Markov basis of $A_2$. Now, for some $w \in \ker_\ZZ(A)$, since $M$ is distance-reducing, there exists some $m \in M$ which distance-reduces $\phi^{-1}(w) \in \ker_\ZZ(A_1)$. Now, by Theorem \ref{thm:globaldr}, $w=\phi(\phi^{-1}(w))$ is distance-reduced by $\phi(m)$ where $\phi(m) \in \phi(M)$. Since $w$ was arbitrary then $\phi(M)$ is distance-reducing.
\end{proof}



\subsection{Homogeneity in terms of bouquets}



As outlined at the start of Section \ref{sec:Bouquets}, in \cite[Theorem 2.1]{petrovic2018bouquet}, an inverse construction is described. Given a matrix and a set of primitive vectors, this produces a generalised Lawrence matrix. In particular, for a simple toric ideal $I_T$ and a signature $\omega \subseteq [s]$, this allows us to construct infinitely-many $T_{\omega}$-robust ideals by letting $T$ be our matrix and ensuring that our $i$th primitive vector has a negative component if and only if $i \not\in \omega$. 

We will use this inverse construction in Examples \ref{ex:prehomrow}, \ref{ex:CIhombad}, \ref{ex:CIhombad2} and \ref{ex: A3}. However, based on Section \ref{sec:poset of signatures}, we only want to deal with homogeneous toric ideals $I_A$.
This motivates Theorem \ref{thm:homrow} in which we describe a necessary and sufficient condition on the bouquet matrix $A_B$ and bouquet-index-encoding vectors $\mathbf{c}_1,\dots ,\mathbf{c}_s$ such that the toric ideal $I_A$ is homogeneous. In particular, homogeneity of $I_A$ depends only on its bouquet data. In addition, since the starting matrix and primitive vectors of the generalised Lawrence matrix $A$ act as the bouquet matrix and bouquet-index-encoding vectors, this allows us to easily construct infinitely-many \textbf{homogeneous} $T_{\omega}$-robust ideals.  

As in the previous section, we let $B_1,\dots,B_s$ be the bouquets of $A$ with bouquet-index-encoding vectors $\mathbf{c}_i$ and bijection $D:\ker_\ZZ(A_B) \rightarrow \ker_\ZZ(A)$. Since $D$ is a linear bijection, we can extend it to the bijection $D_{\QQ}:\ker_\QQ(A_B) \rightarrow \ker_\QQ(A),u \mapsto \mathbf{c}_1 u_1+\dots +\mathbf{c}_s u_s$. For ease of notation, we let $\mathbf{\bar c}_i = \sum_{k=1}^{n} (\mathbf{c}_i)_k$ and $\mathbf{\bar c}=(\mathbf{\bar c}_1,\dots,\mathbf{\bar c}_s) \in \ZZ^s$ for all $i \in [s]$.

Before we prove Theorem~\ref{thm:homrow}, we provide an example of a generalised Lawrence matrix and how it interacts with homogeneity.

\begin{example} \label{ex:prehomrow}
    As input for the inverse construction, we take $T= \begin{bmatrix} 1 & 2 &0&-1 \\ 3 & -2 & 1&3 \end{bmatrix}$ and our integer vectors to be $\mathbf{c}_1=(3,1)$, $\mathbf{c}_2=(2,-5,3)$, $\mathbf{c}_3=(1)$, $\mathbf{c}_4=(5,-3)$. Strictly speaking, these are not bouquet-index-encoding vectors even though we use the same notation, $\mathbf{c}_i$. The true bouquet-index-encoding vectors will be obtained from these vectors by placing their entries in the coordinates corresponding to the relevant bouquet and zeroes elsewhere hence why we ignore this conflation of notation. 
    
    A basis for the integer kernel of $T$ is given by
    $$\begin{bmatrix}1&-1\\-1&0\\-2&6\\-1&-1\end{bmatrix}$$
    
    so, since none of the rows are multiples of each other, we can see that $I_T$ is a simple toric ideal since every bouquet of $T$ will be a singleton. Then, a generalised Lawrence matrix is
    $$A=\begin{bmatrix} 
    0 & 1 & -2 & 0 & 2 & 0 & 1 & 2 \\
    0 & 3 & 2 & 0 & -2 & 1 & -3 & -6 \\
    -1 & 3 & 0 & 0 & 0 & 0 & 0 & 0 \\
    0 & 0 & 5 & 2 & 0 & 0 & 0 & 0 \\
    0 & 0 & -3 & 0 & 2 & 0 & 0 & 0 \\
    0 & 0 & 0 & 0 & 0 & 0 & 3 & 5
    \end{bmatrix}.$$
Up to rows of zeroes, $T=A_B$ is the bouquet matrix of $A$ and we can also see that $\begin{pmatrix}1&1&-1&\frac{1}{2}&\frac{1}{2}&1 \end{pmatrix} A=\mathbf{1}_n$ and therefore $I_A$ is homogeneous. We also calculate that $\mathbf{\bar c}=\begin{pmatrix}
        \mathbf{\bar c}_1&\mathbf{\bar c}_2&\mathbf{\bar c}_3 \end{pmatrix}=\begin{pmatrix}4&0&1&2\end{pmatrix}$ and, if we add up the rows of $A_B$, we also get $\begin{pmatrix}
        4&0&1&2
    \end{pmatrix}$. Thus, while $I_{A_B}$ is not itself homogeneous, we notice that $\mathbf{\bar c} \in \operatorname{Rowspan}_\QQ(A_B)$. Theorem~\ref{thm:homrow} shows that this behaviour holds in general.
\end{example}

\begin{theorem} \label{thm:homrow}
    For an integer matrix $A$, $I_A$ is homogeneous if and only if $\mathbf{\bar c} \in \operatorname{Rowspan}_\QQ(A_B)$.
\end{theorem}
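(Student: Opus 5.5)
We argue by duality: the row span of a matrix is the orthogonal complement of its rational kernel. Over $\QQ$, $I_A$ is homogeneous iff $\mathbf{1}_n \in \operatorname{Rowspan}_\QQ(A) = \ker_\QQ(A)^\perp$. This holds iff $\mathbf{1}_n \cdot v = 0$ for all $v \in \ker_\QQ(A)$. Likewise, $\mathbf{\bar c} \in \operatorname{Rowspan}_\QQ(A_B)$ iff $\mathbf{\bar c}\cdot u = 0$ for all $u \in \ker_\QQ(A_B)$. So the theorem reduces to showing that these two orthogonality conditions are equivalent, and the bijection $D_\QQ$ is what relates them.

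The key computation is as follows. For $u \in \ker_\QQ(A_B)$ we have $D_\QQ(u)=\sum_i u_i \mathbf{c}_i$, so
\[
\mathbf{1}_n \cdot D_\QQ(u) = \sum_{i=1}^s u_i \sum_{k=1}^n (\mathbf{c}_i)_k = \sum_{i=1}^s u_i \mathbf{\bar c}_i = \mathbf{\bar c}\cdot u .
\]
Since $D_\QQ$ is a bijection $\ker_\QQ(A_B)\to\ker_\QQ(A)$, the condition "$\mathbf{1}_n\cdot v=0$ for all $v\in\ker_\QQ(A)$" is equivalent to "$\mathbf{\bar c}\cdot u=0$ for all $u\in\ker_\QQ(A_B)$". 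Both directions of the theorem follow at once.

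The step needing care is justifying that $D_\QQ$ is surjective onto $\ker_\QQ(A)$, rather than only onto its integer points. We take $D:\ker_\ZZ(A_B)\to\ker_\ZZ(A)$ as a bijection from \cite[Theorem~1.9]{petrovic2018bouquet}. A rational kernel vector is a scalar multiple of an integer kernel vector, obtained by clearing denominators. So $D_\QQ$, the $\QQ$-linear extension, hits every rational vector: write $v=\tfrac1m v'$ with $v'\in\ker_\ZZ(A)$, take $u'=D^{-1}(v')$, and then $D_\QQ(u'/m)=v$. Injectivity follows the same way. We also need $\ker_\QQ(A_B)$ to be contained in the domain, which is immediate since it is the rational span of $\ker_\ZZ(A_B)$.

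A subtlety arises with free bouquets, where $\mathbf{c}_i$ is not uniquely determined. There we either invoke the standing assumption of the section that there are no free bouquets, or note the following. For a free bouquet $B_s$, every $u\in\ker_\QQ(A_B)$ has $u_s=0$ by \cite[Remark 1.8]{petrovic2018bouquet}, so the value of $\mathbf{\bar c}_s$ does not affect $\mathbf{\bar c}\cdot u$. The criterion is therefore independent of that choice. We also remark that $A_B$ may differ from $T$ by zero rows, which does not change its kernel or row span.
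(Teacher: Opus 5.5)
Your proposal is correct and follows essentially the same route as the paper: both use $\operatorname{Rowspan}_\QQ = (\ker_\QQ)^{\perp}$, the identity $\mathbf{1}_n \cdot D_\QQ(u) = \mathbf{\bar c}\cdot u$, and the bijectivity of $D_\QQ$ for the reverse direction. Your check that $D_\QQ$ is a bijection on rational kernels (by clearing denominators) and your free-bouquet remark are harmless additions to what the paper takes for granted; the paper simply asserts the linear extension and assumes there are no free bouquets.
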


\begin{proof}
For the ($\Rightarrow$) direction, we suppose $I_A$ is homogeneous ie. $\mathbf{1}_n \in \operatorname{Rowspan}_\QQ(A)$. Since $\operatorname{Rowspan}_\QQ(A)=(\ker_\QQ(A))^{\perp}$, then $v \cdot \mathbf{1}_n=0$ for all $v \in \ker_\QQ(A)$. Now, for any $u \in \ker_\QQ(A_B)$, we note that $D_{\QQ}(u) \cdot \mathbf{1}_n = 0$. Thus,
\begin{align*}
    u \cdot \mathbf{\bar c}=\sum_{i=1}^s u_i \mathbf{\bar c}_i=\sum_{i=1}^s u_i (\mathbf{c}_i \cdot \mathbf{1}_n)=\left(\sum_{i=1}^s u_i \mathbf{c}_i \right) \cdot \mathbf{1}_n=D_{\QQ}(u) \cdot \mathbf{1}_n=0
\end{align*}
and so $\mathbf{\bar c} \in (\ker_\QQ(A_B))^{\perp}=\operatorname{Rowspan}_\QQ(A_B)$.

The ($\Leftarrow$) direction is effectively the same argument. Since $\mathbf{\bar c} \in \operatorname{Rowspan}_\QQ(A_B)=(\ker_\QQ(A_B))^{\perp}$ then, for any $v \in \ker_\QQ(A)$, we let $u:=D_{\QQ}^{-1}(v) \in \ker_\QQ(A_B)$. Then
\begin{align*}
    v \cdot \mathbf{1}_n=D_{\QQ}(u) \cdot \mathbf{1}_n=\left(\sum_{i=1}^s u_i \mathbf{c}_i \right) \cdot \mathbf{1}_n=\sum_{i=1}^s u_i (\mathbf{c}_i \cdot \mathbf{1}_n)=\sum_{i=1}^s u_i \mathbf{\bar c}_i=u \cdot \mathbf{\bar c}=0
\end{align*}
and so $\mathbf{1}_n \in (\ker_\QQ(A))^{\perp}=\operatorname{Rowspan}_\QQ(A)$ and thus $I_A$ is homogeneous.
\end{proof}


Theorem \ref{thm:homrow} provides further evidence that homogeneity seems to allow nice interactions between distance reduction and bouquets. However, we note the following counterexample to the statement: If $I_A$ is homogeneous and complete intersection, then a minimal Markov basis of $A$ is distance-reducing if and only if it distance-reduces the circuits of $A$. 


\begin{example} \label{ex:CIhombad}
    The following matrix 
    $$A=\begin{bmatrix}
-9 & 0 & 0 & 0 & 0 & 3 & 2 & 0 & 1 & 1 & 1 \\
-7 & 0 & 0 & 0 & 0 & 7 & 6 & 0 & 3 & 1 & 5 \\
-8 & 0 & 0 & 0 & 0 & 5 & 4 & 0 & 2 & 1 & 3 \\
-7 & 0 & 0 & 0 & 0 & 4 & 4 & 0 & 1 & 2 & 2 \\
4 & 1 & 0 & 0 & 0 & 0 & 0 & 0 & 0 & 0 & 0 \\
8 & 0 & 1 & 0 & 0 & 0 & 0 & 0 & 0 & 0 & 0 \\
-2 & 0 & 0 & 1 & 0 & 0 & 0 & 0 & 0 & 0 & 0 \\
0 & 0 & 0 & 0 & -1 & 2 & 0 & 0 & 0 & 0 & 0 \\
0 & 0 & 0 & 0 & 0 & 0 & -1 & 1 & 0 & 0 & 0 
\end{bmatrix}.$$
is a generalised Lawrence matrix starting with matrix
    $$
    T=\begin{bmatrix}
    -9&3&2&1&1&1\\
    -7&7&6&3&1&5\\
    -8&5&4&2&1&3\\
    -7&4&4&1&2&2
    \end{bmatrix},
    $$
and primitive vectors $\mathbf{c}_1=(1,-4,-8,2)$, $\mathbf{c}_2=(2,1)$, $\mathbf{c}_3=(1,1)$, $\mathbf{c}_4=\mathbf{c}_5=\mathbf{c}_6=(1)$. As before, none of the rows of any basis for the kernel of $T$ are multiples of each other so all the bouquets of $T$ are singletons and $I_T$ is a simple toric ideal. This implies that, up to rows of zeroes, $T=A_B$ is the bouquet matrix of $A$. Then, from Theorem \ref{thm:homrow}, we can immediately see that $I_A$ is homogeneous since $\mathbf{\bar c}=\begin{pmatrix}-9&3&2&1&1&1\end{pmatrix}$ which is the first row of $T$. We can also check that $I_A$ is homogeneous by definition. Taking  $\theta=\begin{bmatrix}0&-1&2&0&1&1&1&-1&1\end{bmatrix}$, we have $\theta A= \mathbf{1}_n$ and, therefore $\mathbf{1}_n \in \operatorname{Rowspan}(A)$.
    
    Using the \textit{AllMarkovBases} package \cite{CM_2025} for Macaulay2 \cite{M2}, we compute that $A$ has $5$ minimal Markov bases, each of which have $3$ elements, meaning that $I_A$ is complete intersection. Then, using the \textit{mbfailures} command \cite{githubM2DistanceReduction},
    as demonstrated in Example \ref{ex: same bouquet diff DR}, we see that all $5$ minimal Markov bases distance-reduce every element of the Graver basis apart from $v=\begin{pmatrix}1&-4&-8&2&10&5&-1&-1&1&0&-5 \end{pmatrix}$. Furthermore, $v$ is not a circuit since, for example, $c=\begin{pmatrix}1&-4&-8&2&8&4&0&0&3&0&-6\end{pmatrix} \in \ker_\ZZ(A)$ so $\operatorname{Supp}(c) \subset \operatorname{Supp}(v)$ meaning $v$ does not have minimal support. Thus, any minimal Markov basis, $M$, of $A$ distance-reduces all the circuits but does not distance-reduce $v$.

We now explain why $M$ fails to distance-reduce $v$ but does distance-reduce $c$. The choice of minimal Markov basis will not matter here so we choose
$$M=\begin{bmatrix}m_1\\m_2\\m_3\end{bmatrix}=\begin{bmatrix}
    0&0&0&0&0&0&1&1&0&-1&-1\\
    0&0&0&0&2&1&0&0&-2&-1&0\\
    1&-4&-8&2&0&0&0&0&11&4&-6
\end{bmatrix}.$$

To see why $v$ is not distance-reduced by $M$, we can use Proposition \ref{prop:drredef} since $I_A$ is homogeneous, where we switch signs as necessary. We can check exhaustively that the only options are $m_1^+ \le v^-$ and $m_2^+ \le v^+$ and we notice that $\operatorname{Supp}(m_1^-) \cap \operatorname{Supp}(v^+)=\{10,11\} \cap \{1,4,5,6,9\}=\emptyset$ and $\operatorname{Supp}(m_2^-) \cap \operatorname{Supp}(v^-)=\{9,10\} \cap \{2,3,7,8,11\}=\emptyset$. Thus, by Proposition \ref{prop:drredef}, $v$ is not distance-reduced by $m_1,m_2$ or $m_3$ so $v$ is not distance-reduced by $M$.

On the other hand, since $m_3^- =c^-$, in particular $m_3^- \le c^-$. Furthermore, $1 \in \operatorname{Supp}(m_3^+) \cap \operatorname{Supp}(c^+)$ so, by Proposition \ref{prop:drredef}, $c$ is distance-reduced by $m_3$ and thus by $M$. 
\end{example}

We present one more example of this type where, this time, $I_A$ is not complete intersection but $T$ is a monomial curve. Furthermore, even with $I_A$ being homogeneous, the universal Markov basis of $A$ (the union of the minimal Markov bases of $A$) distance-reduces the circuits but is not a distance-reducing set.


\begin{example} \label{ex:CIhombad2}
In example \ref{ex:CIhombad}, our simple toric ideal was not a monomial curve, however, we now present another example with $T=\begin{bmatrix}10&19&37&43&50\end{bmatrix}$. Since $T$ is a monomial curve then $I_T$ is a simple toric ideal so $T$ has $5$ bouquets. We let $A$ be the following homogeneous $T_{\omega}$-robust with signature $(+,+,-,+,+)$ i.e., $\omega=\{1,2,4,5\}$
$$A=\begin{bmatrix}
    10&0&19&0&0&-37&43&0&50&0\\
    -9&1&0&0&0&0&0&0&0&0\\
    0&0&-18&1&0&0&0&0&0&0\\
    0&0&0&0&1&38&0&0&0&0\\
    0&0&0&0&0&0&-42&1&0&0\\
    0&0&0&0&0&0&0&0&-49&1\\
\end{bmatrix}$$
which is a generalised Lawrence matrix with starting matrix $T$ and primitive vectors $\mathbf{c}_1=(1,9)$, $\mathbf{c}_2=(1,18)$, $\mathbf{c}_3=(38,-1)$, $\mathbf{c}_4=(1,42)$ and $\mathbf{c}_5=(1,49)$. Since $T$ is simple, then, up to rows of zeroes, $T=A_B$ is the bouquet matrix $A$. We have $\mathbf{\bar c}=\begin{pmatrix}10&19&37&43&50\end{pmatrix}$, which is clearly in the rowspan of $A_B$ so, by Theorem \ref{thm:homrow}, $A$ is homogeneous.

If we now take the minimal Markov basis 
$$M=\begin{bmatrix}
    3 & 27 & 2 & 36 & -114 & 3 & 1 & 42 & 0 & 0 \\
0 & 0 & 1 & 18 & 76 & -2 & -1 & -42 & -1 & -49 \\
2 & 18 & -3 & -54 & 38 & -1 & 0 & 0 & 0 & 0 \\
3 & 27 & 0 & 0 & -38 & 1 & -1 & -42 & 1 & 49 \\
5 & 45 & 0 & 0 & 0 & 0 & 0 & 0 & -1 & -49 \\
2 & 18 & -4 & -72 & -38 & 1 & 1 & 42 & 1 & 49 \\
1 & 9 & 4 & 72 & 0 & 0 & -2 & -84 & 0 & 0 \\
0 & 0 & 3 & 54 & 0 & 0 & 1 & 42 & -2 & -98 \\
9 & 81 & -7 & -126 & 0 & 0 & 1 & 42 & 0 & 0
\end{bmatrix}$$
then, using the \textit{mbfailures} command as in previous examples, we see that $M$ distance-reduces all elements of the Graver basis $\operatorname{Gr}(A)$ apart from the vectors
$$v=\begin{pmatrix}0&0&2&36&-76&2&2&84&-1&-49\end{pmatrix}$$ and $$w=\begin{pmatrix}10&90&-3&-54&0&0&-1&-42&0&0\end{pmatrix} \text{, }$$
neither of which is a circuit.
We can also see that the intersection of the elements which are not distance-reduced by each minimal Markov basis is $v$. Thus, the universal Markov basis of $A$ distance-reduces all elements of the Graver basis apart from $v$.
\end{example}

\subsection{Distance reduction in $\mathbb{A}^3$}\label{sec: distance reduction A3}

We now let $T$ be a monomial curve with positive integer entries. If $T$ is a monomial curve in $\mathbb{A}^1$, then $\ker_\ZZ(T)$ is empty so this is not an interesting case. Similarly, if $T$ is a monomial curve in $\mathbb{A}^2$, say $T=\begin{bmatrix}n_1 &n_2\end{bmatrix}$, then, letting $v:=\frac{1}{\gcd(n_1,n_2)}(n_2,-n_1) \in \ker_\ZZ(T)$, $\ker_\ZZ(T)=\langle v \rangle$. It is easy to see that $\{v\}$ is the unique minimal Markov basis of $T$ and $v$ distance-reduces everything so, in particular, this minimal Markov basis is distance-reducing.

When we let $T$ be a monomial curve in $\mathbb{A}^3$, much more complexity arises.  So, in the following we will assume that $T=\begin{bmatrix}n_1 & n_2 & n_3\end{bmatrix}$ where $n_1,n_2,n_3 \in \ZZ_{>0}$. Any matrix with bouquet ideal $I_T$ will have $3$ bouquets and, as before, we will represent the signature as some subset $\omega \subseteq [3]$ where $\omega$ contains indices corresponding to non-mixed bouquets.

\begin{remark} \label{rem:circuitsA3}
For $T \in  \ZZ_{>0}^3$, letting $g_{ij}=\gcd(n_i,n_j)$ for $i,j \in [3]$, the circuits of $T$ are, up to sign, $c_{12}:=\frac{1}{g_{12}}(n_2,-n_1,0)$, $c_{13}:=\frac{1}{g_{13}}(n_3,0,-n_1)$ and $c_{23}:=\frac{1}{g_{23}}(0,n_3,-n_2)$. In particular, we notice that they are supported on exactly two components and the positive part and negative are both supported on one component each.
\end{remark}

Since we are working with $T \in \ZZ^3_{>0}$, all elements of $\ker_\ZZ(T)$ have only $3$ components. In addition, since the $T_{\omega}$-robust ideal $A$ is homogeneous, we can exploit the support intersection condition from Proposition \ref{prop:drredef} to prove some strong results about distance reduction in this setting.
We will prove two propositions, Proposition \ref{prop:A3circuitDR} and Proposition \ref{prop:allcircuits} aiming towards Theorem \ref{thm:A3class} which gives the necessary and sufficient conditions of when a minimal Markov basis of a homogeneous $T_{\omega}$-robust matrix fails to be distance-reducing.

\begin{proposition} \label{prop:A3circuitDR}
    Let $T=\begin{bmatrix}
        n_1 & n_2 & n_3
    \end{bmatrix} \in  \ZZ_{>0}^3$, let $\omega \subseteq [3]$ and let $A$ be a homogeneous $T_{\omega}$-robust matrix. Then, for $\hat z,z \in \ker_\ZZ(A)$, if $\hat z^+ \le z^+$ but $z$ is not distance-reduced by $\hat z$, then $\hat z$ is a circuit.
\end{proposition}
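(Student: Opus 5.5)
The plan is to pull everything back to $\ker_\ZZ(T)$ via the bijection $D=D_\omega$, using $\hat z=D(\hat y)$ and $z=D(y)$ with $\hat y,y\in\ker_\ZZ(T)\subseteq\ZZ^3$. We then show that $\hat y$ has support of size at most two. Since $n_1,n_2,n_3>0$, every nonzero element of $\ker_\ZZ(T)$ has both a positive and a negative coordinate. So ``support of size at most two'' means exactly two coordinates, one positive and one negative. Such a vector is an integer multiple of one of the circuits $c_{12},c_{13},c_{23}$ of Remark~\ref{rem:circuitsA3}. Because $D$ preserves circuits (\cite[Theorem~1.9]{petrovic2018bouquet}) and $\operatorname{Supp}(D(u))=\bigcup_{i\in\operatorname{Supp}(u)}\operatorname{Supp}(\mathbf{c}_i)$, minimality of support transfers to $\hat z$. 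So we aim for a contradiction from assuming $\hat y$ has full support.

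First, exploit the hypotheses. Since $A$ is homogeneous, Proposition~\ref{prop:drredef} applies. We have $\hat z^+\le z^+$ (with these signs) but $z$ is not distance-reduced by $\hat z$, so necessarily $\operatorname{Supp}(\hat z^-)\cap\operatorname{Supp}(z^-)=\emptyset$. By Corollary~\ref{cor:globaldr1}, $\hat z^+\le z^+$ gives $\hat y^+\le y^+$, hence $\operatorname{Supp}(\hat y^+)\subseteq\operatorname{Supp}(y^+)$.

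Next, translate the empty intersection into $T$-coordinates. Take any $i$ with $\hat y_i<0$. The vector $\mathbf{c}_i$ always has some coordinate $k$ with $(\mathbf{c}_i)_k>0$, by the normalisation that its first nonzero entry is positive. Then $\hat z_k=\hat y_i(\mathbf{c}_i)_k<0$. If also $y_i<0$, then $z_k<0$, and $k$ lies in both negative supports, which is a contradiction. Hence $\hat y_i<0$ implies $y_i\ge 0$.

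Now do a case split on a full-support $\hat y$.
\begin{itemize}
\item Suppose $\hat y$ has two positive coordinates $i,j$ and one negative coordinate $l$. Then $y_i,y_j>0$ and $y_l\ge0$, so $y\ge0$ is nonzero with $Ty=0$. This is impossible since $T$ has positive entries.
\item Suppose $\hat y$ has one positive coordinate $i$ and two negative coordinates $j,l$. Then $y_i>0$ and $y_j,y_l\ge 0$, giving the same contradiction.
\end{itemize}
So $|\operatorname{Supp}(\hat y)|=2$, and $\hat z$ has minimal support.

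The main obstacle I expect is the coprimality part of being a circuit. A support-size-two $\hat y$ is a priori only an integer multiple $m\,c_{ij}$. For that step I would first check whether the intended setting has $\hat z$ ranging over Markov or Graver elements, where primitivity forces $m=\pm1$. Failing that, I would try to show directly that $m\ge2$ is incompatible with the hypotheses. Concretely, the same sign analysis shows $y$ is positive where $\hat y$ is positive and nonnegative where $\hat y$ is negative. That should force a contradiction or allow replacing $\hat z$ by the circuit $D(c_{ij})$. Otherwise the statement should be read as asserting that $\hat z$ has circuit support.
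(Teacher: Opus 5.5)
Your argument for the support statement is correct and is essentially the paper's. Both proofs pull back via $D$, use Corollary~\ref{cor:globaldr1} to get $\hat y^+\le y^+$, use a positive coordinate of $\mathbf{c}_i$ to produce a common negative index, and use positivity of $n_1,n_2,n_3$. The paper exhibits a shared negative coordinate directly in each of the two full-support cases. You instead first show that $\hat y_i<0$ forces $y_i\ge 0$, and then obtain a nonzero $y\ge 0$ in $\ker_\ZZ(T)$ in both cases, which is a slightly cleaner packaging of the same idea.

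The obstacle you flagged is genuine, and it is a defect of the statement rather than of your argument. The paper's proof passes over it by asserting that the circuits of $T$ are \emph{precisely} the kernel elements whose positive and negative parts each have one-element support. That is false for multiples. The paper also tacitly excludes $\hat z=0$, where its claim $\hat y^\pm\neq 0$ fails.

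In particular, your first remedy, deriving a contradiction from $|m|\ge 2$, cannot work. Take $T=\begin{bmatrix}1&1&1\end{bmatrix}$, $\omega=[3]$ and $A=T$; this is homogeneous and $T_{[3]}$-robust with $D$ the identity. Let $\hat z=(2,-2,0)$ and $z=(2,0,-2)$. Then:
\begin{itemize}
\item $\hat z^+=z^+$;
\item $\lVert z-\hat z\rVert=4=\lVert z\rVert$;
\item none of $\hat z^+\le z^-$, $\hat z^-\le z^+$, $\hat z^-\le z^-$ holds.
\end{itemize}
So $z$ is not distance-reduced by $\hat z$, yet $\hat z=2(1,-1,0)$ is not a circuit.

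The right resolution is your other option. Add the hypothesis $\hat z\in\operatorname{Gr}(A)$, which is the only way the proposition is used, in Proposition~\ref{prop:allcircuits} and Theorem~\ref{thm:A3class}. With that hypothesis your proof closes. Suppose $\hat y=m\,c_{ij}$ with $|m|\ge 2$, and let $\epsilon=\pm 1$ be the sign of $m$. Then $\hat z=\epsilon D(c_{ij})+(m-\epsilon)D(c_{ij})$ is a proper conformal decomposition, contradicting primitivity. Hence $\hat z=\pm D(c_{ij})$, which is a circuit. Without the Graver hypothesis, the correct conclusion is only that $\hat z$ is a nonzero multiple of a circuit.
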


\begin{proof}
    Letting $\hat y =D^{-1}(\hat z)$ and $y=D^{-1}(z)$, by Corollary \ref{cor:globaldr1} we note that $\hat y^+ \le y^+$. As noted in Remark \ref{rem:circuitsA3}, circuits of $T$ are precisely the elements of $\ker_\ZZ(T)$ where the positive and negative parts are supported on one component each. Suppose that $\hat y$ is not a circuit. Then,  since $A$ is positively-graded, $\ker_\ZZ(A) \cap \NN^n$ is empty so $\hat y^+ \neq 0$ and $\hat y^- \neq 0$. Thus, either $\hat y^+$ or $\hat y^-$ is supported on at least two components and we split into these two cases now. 

    \begin{itemize}
        \item For some $i,j \in \{1,2,3\}$ where $i \neq j$, suppose $i,j \in \operatorname{Supp}(\hat y^+)$  ie. $\hat y_i>0$ and $\hat y_j>0$. Since $\hat y^+ \le y^+$ then $i,j \in \operatorname{Supp}(y^+)$. Let $k$ be the element of $\{1,2,3\}$ which is not $i$ or $j$. Since $\hat y,y \in \ker_\ZZ(T)$ where $n_1,n_2,n_3 >0$ then $\hat y_k=-\frac{1}{n_k}(n_i \hat y_i + n_j \hat y_j)<0$ and, similarly, $y_k<0$. We can always find some $k' \in [n]$ such that $(\mathbf{c}_k)_{k'}>0$ and thus $\hat z_{k'}=D(\hat y)_{k'}=\hat y_k (\mathbf{c}_k)_{k'}<0$ and, similarly, $z_{k'}<0$ so $k' \in \operatorname{Supp}(\hat z^-) \cap \operatorname{Supp}(z^-)$. 
        \item Now, for some $i,j \in \{1,2,3\}$ where $i \neq j$, suppose $i,j \in \operatorname{Supp}(\hat y^-)$  ie. $\hat y_i<0$ and $\hat y_j<0$ and let $k$ be the element of $\{1,2,3\}$ which is not $i$ or $j$. Since $\hat y,y \in \ker_\ZZ(T)$ where $n_1,n_2,n_3 >0$ then $\hat y_k=-\frac{1}{n_k}(n_i \hat y_i + n_j \hat y_j)>0$. Thus, since $\hat y^+ \le y^+$, we have $0< \hat y_k<y_k$. If both $y_i\ge 0$ and $y_j\ge 0$ then, since $y \in \ker_\ZZ(T)$, $0=Ty=n_iy_i+n_jy_j+n_ky_k>0$ which is a contradiction. Thus, either $y_i<0$ or $y_j<0$, up to swapping $i$ and $j$, let $y_i<0$. We can always find some $i' \in [n]$ such that $(\mathbf{c}_i)_{i'}>0$ and thus $\hat z_{i'}=D(\hat y)_{i'}=\hat y_i (\mathbf{c}_i)_{i'}<0$ and, similarly, $z_{i'}<0$ so $i' \in \operatorname{Supp}(\hat z^-) \cap \operatorname{Supp}(z^-)$.
    \end{itemize}

    In either case, $\operatorname{Supp}(\hat z^-) \cap \operatorname{Supp}(z^-)$ is not empty. Since $A$ is homogeneous and $\hat z^+ \le z^+$, then, by Proposition \ref{prop:drredef}, $\hat z$ is distance-reduced by $z$ which is a contradiction since $z$ is not distance-reduced by $z$. Thus, $\hat y$ must be a circuit and, by \cite[Theorem 1.11]{petrovic2018bouquet}, $D$ preserves circuits, so, if $\hat y$ is a circuit then $\hat z$ is a circuit.
    
\end{proof}


\begin{proposition} \label{prop:allcircuits}
    Let $T=\begin{bmatrix}
        n_1 & n_2 & n_3
    \end{bmatrix} \in  \ZZ_{>0}^3$, let $\omega \subseteq [3]$ and let $A$ be a homogeneous $T_{\omega}$-robust matrix. For $z_1,z_2,z \in \operatorname{Gr}(A)$, if $z_1^+ \le z^+$ and $z_2^- \le z^-$ but $z_1$ and $z_2$ do not distance-reduce $z$, then the circuits of $A$ are $\mathcal{C}(A)=\{z_1,z_2,z\}$.
\end{proposition}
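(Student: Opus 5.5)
The plan is to pull everything back to the bouquet matrix $T$ via the bijection $D\colon \ker_\ZZ(T)\to\ker_\ZZ(A)$ and then argue purely about the signs of three coordinates. The first step is to show that $z_1$ and $z_2$ are circuits using Proposition~\ref{prop:A3circuitDR}.
\begin{itemize}
\item For $z_1$: we have $z_1^+\le z^+$ and $z_1$ does not distance-reduce $z$, so $z_1$ is a circuit.
\item For $z_2$: apply the same proposition to $-z_2$ and $-z$. Indeed $(-z_2)^+=z_2^-\le z^-=(-z)^+$, and distance reduction is insensitive to these sign changes, so $-z_2$, and hence $z_2$, is a circuit.
\end{itemize}

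Next set $y_1=D^{-1}(z_1)$, $y_2=D^{-1}(z_2)$ and $y=D^{-1}(z)$. By Corollary~\ref{cor:globaldr1} we get $y_1^+\le y^+$, and applied to the negated vectors we get $y_2^-\le y^-$. By Remark~\ref{rem:circuitsA3}, write $\operatorname{Supp}(y_1^+)=\{i\}$, $\operatorname{Supp}(y_1^-)=\{j\}$, $\operatorname{Supp}(y_2^-)=\{a\}$ and $\operatorname{Supp}(y_2^+)=\{b\}$. Then $i\neq j$, $a\neq b$, $y_i>0$ and $y_a<0$, so in particular $i\ne a$.

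The key translation step uses Proposition~\ref{prop:drredef}. Since $z_1^+\le z^+$ but $z_1$ does not reduce $z$, we must have $\operatorname{Supp}(z_1^-)\cap\operatorname{Supp}(z^-)=\emptyset$. I claim this forces $y_j\ge 0$.
\begin{itemize}
\item Suppose instead $y_j<0$.
\item As in the proof of Proposition~\ref{prop:A3circuitDR}, pick $k'$ with $(\mathbf c_j)_{k'}>0$, which always exists under the convention on bouquet-index-encoding vectors.
\item Then $(z_1)_{k'}=(y_1)_j(\mathbf c_j)_{k'}<0$ and $z_{k'}=y_j(\mathbf c_j)_{k'}<0$, which contradicts the empty intersection.
\end{itemize}
Symmetrically, since $z_2$ does not reduce $z$, we have $\operatorname{Supp}(z_2^+)\cap\operatorname{Supp}(z^+)=\emptyset$, and the same argument forces $y_b\le 0$. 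This sign bookkeeping through $D$ is the step I expect to need the most care. It works only because each $\mathbf c_k$ has at least one positive entry, and those positive entries preserve the sign of the corresponding coordinate $y_k$.

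The combinatorial finish is as follows. Let $m$ be the index in $[3]\setminus\{i,a\}$.
\begin{itemize}
\item Since $j\neq i$, we have $j\in\{a,m\}$. If $j=a$, then $y_a<0$ contradicts $y_j\ge 0$, so $j=m$ and $y_m\ge 0$.
\item Since $b\ne a$, we have $b\in\{i,m\}$. If $b=i$, then $y_i>0$ contradicts $y_b\le0$, so $b=m$ and $y_m\le 0$.
\end{itemize}
Hence $y_m=0$, and $y$ is supported exactly on $\{i,a\}$ with opposite signs. Since $z\in\operatorname{Gr}(A)$ and $D$ preserves the Graver basis, $y$ is primitive. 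A primitive vector of $\ker_\ZZ(T)$ supported on two coordinates is, up to sign, the circuit $c_{ia}$. By the bijection $D$ preserving circuits, $z$ is therefore a circuit of $A$.

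Finally, $y_1$ is supported on $\{i,m\}$ and $y_2$ on $\{a,m\}$, so $y_1$, $y_2$ and $y$ are the three distinct circuits $c_{im}$, $c_{am}$ and $c_{ia}$. By Remark~\ref{rem:circuitsA3}, these are all the circuits of $T$ up to sign. Transporting them through $D$ gives $\mathcal C(A)=\{z_1,z_2,z\}$ up to sign.
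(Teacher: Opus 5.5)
Your proposal is correct and follows essentially the same route as the paper. You pull back to $T$ via $D$, use a positive entry of each $\mathbf{c}_k$ together with Proposition~\ref{prop:drredef} to turn ``does not distance-reduce'' into sign constraints on $y$, and conclude that $y$ vanishes on the remaining index. Two differences are worth noting: you state the constraints $y_j\ge 0$ and $y_b\le 0$ up front, which is a tidier version of the paper's sequence ($j\neq k$, then $l=j$, then $y_j=0$), and you spell out the primitivity argument identifying $y$ with $\pm c_{ia}$, which the paper leaves implicit.
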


\begin{proof}
    By Proposition \ref{prop:A3circuitDR}, $z_1$ is a circuit and, since $(-z_2)^+=z_2^- \le z^-=(-z)^+$ and distance reduction does not depend on the signs of the vectors, Proposition \ref{prop:A3circuitDR} also implies that $z_2$ is a circuit. Letting $y=D^{-1}(z)$, $y_1=D^{-1}(z_1)$ and $y_2=D^{-1}(z_2)$, by Corollary \ref{cor:globaldr1}, $y_1^+ \le y^+$ and $y_2^- \le y^-$. Furthermore, by \cite[Theorem 1.11]{petrovic2018bouquet}, $D$ is a bijective correspondence between $\mathcal{C}(T)$ and $\mathcal{C}(A)$ so $y_1$ and $y_2$ are circuits and $|\mathcal{C}(A)|=3$. As discussed in Remark \ref{rem:circuitsA3}, the positive and negative parts of a circuit of $T$ are supported on exactly one component each, so let $\{i\}=\operatorname{Supp}(y_1^+)$, $\{j\}=\operatorname{Supp}(y_1^-)$ and $\{k\}=\operatorname{Supp}(y_2^-)$ for some $i,j,k \in [3]$. Since the supports of the positive and negative parts of a vector are disjoint then $\operatorname{Supp}(y_1^+) \cap \operatorname{Supp}(y_1^-)$ is empty so $i \neq j$. Using this observation again, since $y_1^+ \le y^+$ and $y_2^- \le y^-$, then $\operatorname{Supp}(y_1^+) \subseteq \operatorname{Supp}(y^+)$ and $\operatorname{Supp}(y_2^-) \subseteq  \operatorname{Supp}(y^-)$ and thus $\operatorname{Supp}(y_1^+) \cap \operatorname{Supp}(y_2^-) \subseteq \operatorname{Supp}(y^+) \cap \operatorname{Supp}(y^-)$ is empty so $i \neq k$. Suppose $j=k$. Then, since every bouquet-index-encoding vector has a positive component, we can find $j' \in [n]$ such that $(\mathbf{c}_j)_{j'}>0$. Thus $(z_1)_{j'}=D(y_1)_{j'}=(y_1)_j (\mathbf{c}_j)_{j'}<0$. Also, since $y_2^- \le y^-$, then $j=k\in \operatorname{Supp}(y_2^-) \subseteq \operatorname{Supp}(y^-)$ so $(z)_{j'}=D(y)_{j'}=(y)_j (\mathbf{c}_j)_{j'}<0$. Thus, $j' \in \operatorname{Supp}(z_1^-) \cap \operatorname{Supp}(z^-)$ and, combining this with $z_1^+ \le z^+$, Proposition \ref{prop:drredef} tells us that $z_1$ distance-reduces $z$. This is a contradiction since we know $z_1$ doesn't distance-reduce $z$ and thus $j \neq k$ and $\{i,j,k\}=[3]$. But now let $\{l\}=\operatorname{Supp}(y_2^+)$ for some $l \in [3]$. Using symmetric arguments, we have $l \neq k$ and $l \neq i$ so $l=j$. To summarise, we have $\{i\}=\operatorname{Supp}(y_1^+)$, $\{j\}=\operatorname{Supp}(y_1^-)=\operatorname{Supp}(y_2^+)$ and $\{k\}=\operatorname{Supp}(y_2^-)$ where $\{i,j,k\} =[3]$.

    We now claim that $y_j=0$. If $y_j<0$ then $(z)_{j'}=D(y)_{j'}=(y)_j (\mathbf{c}_j)_{j'}<0$. Also, since $j \in \operatorname{Supp}(y_1^-)$ then $(z_1)_{j'}=D(y_1)_{j'}=(y_1)_j (\mathbf{c}_j)_{j'}<0$. Thus, $j' \in \operatorname{Supp}(z_1^-) \cap \operatorname{Supp}(z^-)$ and, combining this with $z_1^+ \le z^+$, Proposition \ref{prop:drredef} tells us that $z_1$ distance-reduces $z$. However, this is a contradiction since $z_1$ does not distance-reduce $z$. A symmetric argument follows if $y_j>0$ so therefore $y_j=0$. Since $y$ is now only supported on two components, as discussed in Remark \ref{rem:circuitsA3}, $y$ is the third and final circuit of $T$, and, as before, $D$ preserves circuits, so $z=D(y)$ is the third and final circuit of $A$ and $\mathcal{C}(A)=\{z_1,z_2,z\}$.
\end{proof}


We now present a complete classification of when a minimal Markov basis of a homogeneous $T_{\omega}$-robust matrix fails to be distance-reducing for $T$ a monomial curve in $\mathbb{A}^3$.

\begin{theorem} \label{thm:A3class}
    Let $T=\begin{bmatrix}
        n_1 & n_2 & n_3
    \end{bmatrix} \in  \ZZ_{>0}^3$, $\omega \subseteq [3]$ and let $A$ be a homogeneous $T_{\omega}$-robust matrix with a minimal Markov basis $M$. Then, $M$ fails to be distance-reducing if and only if the following three conditions hold:
    \begin{itemize}
        \item $\omega=[3]$,
        \item $T$ is a complete intersection, and
        \item  $M$ contains two elements which are both circuits.
    \end{itemize}
In this case, $M$ distance-reduces all elements of $\operatorname{Gr}(A)$ except the only circuit of $A$ not contained in $M$.
\end{theorem}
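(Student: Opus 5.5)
The plan is to reduce everything to Graver elements and then apply Proposition~\ref{prop:allcircuits}. By \cite[Theorem~7.4]{CK_2024}, $M$ is distance-reducing if and only if it distance-reduces every $z\in\operatorname{Gr}(A)$.

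\textbf{Forward direction: identifying the only possible failure.} Fix $z\in\operatorname{Gr}(A)\setminus M$ that is not distance-reduced by $M$, and let $b=Az^+=Az^-$. Since $M$ is a Markov basis, the fiber $\mathcal F_b$ is connected by moves of $M$.
\begin{itemize}
\item Because $z^+\neq z^-$, some move of $M$ can be applied at $z^+$. After changing its sign, this gives $z_1\in M$ with $z_1^+\le z^+$.
\item Likewise some move applies at $z^-$, giving $z_2\in M$ with $z_2^-\le z^-$.
\end{itemize}
Neither $z_1$ nor $z_2$ distance-reduces $z$. Hence Proposition~\ref{prop:allcircuits} gives $\mathcal C(A)=\{z_1,z_2,z\}$, with $z_1,z_2\in M$ circuits and $z\notin M$. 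This already yields the third condition. It also yields the final sentence of the theorem: any Graver element that $M$ fails to reduce must be the unique circuit not in $M$, and no other element can fail.

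Next I would reuse the notation from the proof of Proposition~\ref{prop:allcircuits}:
\[
y_1=D^{-1}(z_1),\quad y_2=D^{-1}(z_2),\quad y=D^{-1}(z),
\]
with $\operatorname{Supp}(y_1^+)=\{i\}$, $\operatorname{Supp}(y_1^-)=\operatorname{Supp}(y_2^+)=\{j\}$, $\operatorname{Supp}(y_2^-)=\{k\}$, and $y_j=0$. This forces $y^+$ to be supported on $i$ and $y^-$ on $k$.

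\textbf{Forward direction: showing $\omega=[3]$.}
\begin{itemize}
\item Suppose $i\notin\omega$, so bouquet $B_i$ is mixed. Then $\mathbf c_i$ has a negative coordinate $i'$. Since $(y_1)_i>0$ and $y_i>0$, both $z_1$ and $z$ are negative at $i'$. Together with $z_1^+\le z^+$, Proposition~\ref{prop:drredef} says $z_1$ distance-reduces $z$, a contradiction.
\item Suppose $k\notin\omega$. The symmetric argument with $z_2$ and a negative coordinate of $\mathbf c_k$ gives the same contradiction.
\item Suppose $j\notin\omega$. Take a negative coordinate $j'$ of $\mathbf c_j$. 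Since $(y_1)_j<0$, we get $(z_1)_{j'}>0$. But $z_j'=y_j(\mathbf c_j)_{j'}=0$, which contradicts $z_1^+\le z^+$.
\end{itemize}
So $\omega=[3]$.

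\textbf{Forward direction: showing $T$ is a complete intersection.} Since $\omega=[3]$, both $A$ and $T$ itself are $T_{[3]}$-robust. By \cite[Theorem~2.2]{kosta2023strongly}, $D^{-1}(M)$ is a minimal Markov basis of $T$ containing the two circuits $y_1,y_2$. By Herzog's description of monomial curves in $\mathbb A^3$, a non-complete-intersection $I_T$ has exactly three minimal generators, each of the form $x_a^{\alpha}-x_b^{\beta}x_c^{\gamma}$ with full support. Such generators are not circuits. Therefore $I_T$ must be a complete intersection.

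\textbf{Converse.} Assume $\omega=[3]$, $T$ is a complete intersection, and $M\ni z_1,z_2$ are circuits. Then $I_T$ has height $2$, so $M=\{z_1,z_2\}$. The two circuits $y_1,y_2$ share exactly one coordinate $j$. After choosing signs, $y_1$ goes $i\to j$ and $y_2$ goes $j\to k$. The third circuit is then $y=\pm c_{ik}$, oriented $i\to k$.

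Since $\omega=[3]$, every $\mathbf c_\ell$ is non-negative. Hence the sign pattern of $D(u)$ on bouquet $\ell$ equals the sign of $u_\ell$. Now check the four sign options of Proposition~\ref{prop:drredef}:
\begin{itemize}
\item $z_1$ can only be applicable through $z_1^+\le z^+$ (on bouquet $i$), but $\operatorname{Supp}(z_1^-)$ lies in bouquet $j$, which is disjoint from $\operatorname{Supp}(z^-)$ in bouquet $k$.
\item $z_2$ can only be applicable via $z_2^-\le z^-$, but $\operatorname{Supp}(z_2^+)$ lies in bouquet $j$, which misses $\operatorname{Supp}(z^+)$ in bouquet $i$.
\end{itemize}
So $z\in\operatorname{Gr}(A)$ is not distance-reduced by $M$, and $M$ fails to be distance-reducing.

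\textbf{Main obstacle.} The hardest step is the complete-intersection part of the forward direction. It needs two inputs: that the Markov-basis bijection of \cite{kosta2023strongly} applies with $T$ itself as the $T_{[3]}$-robust ideal, and the precise structure theorem for non-complete-intersection monomial curves in $\mathbb A^3$, namely that no minimal generator is then a circuit. If the Herzog citation is awkward, I would first try to argue directly from height and the count of minimal generators.
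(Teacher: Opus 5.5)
Your proposal is correct and follows the paper's proof essentially step for step: the reduction to a Graver element $z$, the extraction of $z_1,z_2\in M$ with $z_1^+\le z^+$ and $z_2^-\le z^-$ from connectivity of the fiber, Proposition~\ref{prop:allcircuits}, the same bouquet-by-bouquet argument forcing $\omega=[3]$, and the Markov-basis bijection of \cite{kosta2023strongly} combined with the structure of non-complete-intersection monomial curves in $\mathbb{A}^3$. At that last step you implicitly need that the minimal Markov basis of $T$ is unique in the non-complete-intersection case, so that $D^{-1}(M)$ must consist of full-support binomials; the paper gets this from \cite[Theorem~3.2]{CK_2024}. Your converse is in fact more explicit than the paper's, which compresses that direction into its final sentence: you use that every $\mathbf c_\ell$ is non-negative when $\omega=[3]$, and you check all four sign options of Proposition~\ref{prop:drredef} against $M=\{z_1,z_2\}$.
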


\begin{proof}
    Suppose that $M$ is not distance-reducing. By \cite[Theorem 7.4]{CK_2024}, $M$ does not distance-reduce the Graver basis $\operatorname{Gr}(A)$ of $A$, so we have $z \in \operatorname{Gr}(A)$ such that $M$ does not distance-reduce $z$. Since $M$ is a minimal Markov basis, its moves connects $z^+$ and $z^-$ in the fiber, and thus we can find $z_1 \in M$ and $z_2 \in M$ such that $z_1^+ \le z^+$ and $z_2^- \le z^-$. Since $M$ does not distance-reduce $z$, then $z_1$ and $z_2^-$ do not distance-reduce $z$ so, by Proposition \ref{prop:allcircuits}, $\mathcal{C}(A)=\{z_1,z_2,z\}$. Letting $y=D^{-1}(z)$, $y_1=D^{-1}(z_1)$ and $y_2=D^{-1}(z_2)$, as in the proof of Proposition \ref{prop:allcircuits}, we let $\{i\}=\operatorname{Supp}(y_1^+)=\operatorname{Supp}(y^+)$, $\{j\}=\operatorname{Supp}(y_1^-)=\operatorname{Supp}(y_2^+)$ and $\{k\}=\operatorname{Supp}(y_2^-)=\operatorname{Supp}(y^-)$ where $\{i,j,k\} =[3]$.

    Now, suppose $\omega \neq [3]$, thus, at least one of $B_i,B_j,B_k$ is mixed. If $B_i$ is mixed, then we can find $i' \in [n]$ such that $(\mathbf{c}_i)_{i'}<0$. Thus, $(z_1)_{i'}=D(y_1)_{i'}=(y_1)_i (\mathbf{c}_i)_{i'}<0$ and $z_{i'}=D(y)_{i'}=y_i (\mathbf{c}_i)_{i'}<0$ so $i' \in \operatorname{Supp}(z_1^-) \cap \operatorname{Supp}(z^-)$. But, since $z_1^+ \le z^+$, then, by Proposition \ref{prop:drredef}, $z$ is distance-reduced by $z_1$ which is a contradiction. A symmetric argument holds for $B_k$, so therefore $B_j$ must be mixed. However, $j \in \operatorname{Supp}(y_1^-)$ so $(y_1)_j<0$ and, since $B_j$ is mixed, we can find some $j' \in [n]$ such that $(\mathbf{c}_j)_{j'}<0$. Thus, $(z_1)_{j'}=D(y_1)_{j'}=(y_1)_j (\mathbf{c}_j)_{j'}>0$ and, since $z_1^+ \le z^+$, then $z_{j'} \ge (z_1)_{j'}>0$. This contradicts the fact that $j \not\in \operatorname{Supp}(y^+) \cup \operatorname{Supp}(y^-)= \operatorname{Supp}(y)$ so $y_j=0$ and thus $(z)_{j'}=D(y)_{j'}=y_j (\mathbf{c}_j)_{j'}=0$. Since we have a contradiction then $\omega =[3]$. 
    
    Finally, $T$ and $A$ have the same bouquet and the same signature, so, by \cite[Theorem 2.2]{kosta2023strongly}, $D$ is a bijective correspondence between the minimal Markov bases of $T$ and the minimal Markov bases of $A$. The minimal Markov bases of monomial curves in $\mathbb{A}^3$ are very well-understood, having been studied by Herzog in \cite{Herzog1970} and \cite[Theorem 3.2]{CK_2024} gives a succinct description. If $T$ is not a complete intersection, then it has a unique minimal Markov basis which does not contain any circuits. This would imply that $A$ also has a unique minimal Markov basis which does not contain any circuits, which is a contradiction since $z_1 \in M$ and $z_2 \in M$ which are both circuits. Thus $T$ is a complete intersection and $M$ contains two elements, which are $z_1$ and $z_2$. As observed previously, if $M$ fails to distance-reduce some $z \in \operatorname{Gr}(A)$, then $z$ is the third and only circuit not in $M$. Indeed, since $z_1^+ \le z^+$ and $z_2^- \le z^-$ combined with the fact that $\operatorname{Supp}(z_1^-) \cap \operatorname{Supp}(z^-)$ and $\operatorname{Supp}(z_2^+) \cap \operatorname{Supp}(z^+)$ are both empty, Proposition \ref{prop:drredef} confirms that $z$ is not distance-reduced by either $z_1$ or $z_2$ so $M$ does not distance-reduce $z$.
\end{proof}

Contained within the proof of Theorem \ref{thm:A3class} is the following corollary.

\begin{corollary}
     Let $T=\begin{bmatrix}
        n_1 & n_2 & n_3
    \end{bmatrix} \in \ZZ_{>0}^3$ be a non-complete intersection, let $\omega \subseteq [3]$ and let $A$ be a homogeneous $T_{\omega}$-robust matrix. Then, all minimal Markov bases of $A$ are distance-reducing.
\end{corollary}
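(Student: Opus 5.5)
This is the contrapositive of one direction of Theorem~\ref{thm:A3class}, so the plan is to read it off from that theorem. Let $M$ be any minimal Markov basis of $A$. Suppose, for contradiction, that $M$ is not distance-reducing. Then the forward implication of Theorem~\ref{thm:A3class} forces all three listed conditions. In particular, $T$ must be a complete intersection, which contradicts the hypothesis. Hence every minimal Markov basis of $A$ is distance-reducing.

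The only point needing care is that the forward direction of Theorem~\ref{thm:A3class} really derives complete intersection of $T$ from the mere failure of distance reduction. Its proof proceeds in three steps.
\begin{enumerate}
\item \cite[Theorem 7.4]{CK_2024} produces a Graver element $z$ that $M$ fails to distance-reduce.
\item Connectivity of the fiber of $z^\pm$ under $M$ gives $z_1,z_2\in M$ with $z_1^+\le z^+$ and $z_2^-\le z^-$.
\item Proposition~\ref{prop:allcircuits} then forces $z_1$ and $z_2$ to be circuits, with $\mathcal C(A)=\{z_1,z_2,z\}$.
\end{enumerate}
The proof then transfers minimal Markov bases through $D$ via \cite[Theorem 2.2]{kosta2023strongly}, using that $T$ and $A$ share bouquet ideal and signature $\omega=[3]$. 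The Herzog description in \cite[Theorem 3.2]{CK_2024} says that a non-complete-intersection monomial curve in $\mathbb{A}^3$ has a unique minimal Markov basis containing no circuits. Thus $M=D(\text{that basis})$ would contain no circuits, contradicting $z_1,z_2\in M$.

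The expected obstacle is only bookkeeping. Proposition~\ref{prop:allcircuits} takes its inputs in $\operatorname{Gr}(A)$, so $z_1$ and $z_2$ must lie there. This holds because minimal Markov basis elements lie in the Graver basis (\cite{sturmfels1996grobner}) and $z\in\operatorname{Gr}(A)$. One should also note that the argument uses no assumption on $\omega$ beyond what the theorem already derives. With these checks the corollary follows directly.
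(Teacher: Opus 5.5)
Your proposal is correct and takes essentially the paper's route: the paper states that this corollary is contained in the proof of Theorem~\ref{thm:A3class}, where failure of distance reduction forces $\omega=[3]$ and two circuits in $M$. Via the bijection of \cite[Theorem 2.2]{kosta2023strongly}, that is incompatible with $T$ being a non-complete intersection, since such a $T$ has a unique minimal Markov basis containing no circuits. Your recap of the intermediate steps, including the check that $z_1,z_2\in\operatorname{Gr}(A)$, is accurate.
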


We also see that our circuits property holds in this case as well.

\begin{corollary}
     Let $T=\begin{bmatrix}
        n_1 & n_2 & n_3
    \end{bmatrix} \in \ZZ_{>0}^3$, $\omega \subseteq [3]$ and let $A$ be a homogeneous $T_{\omega}$-robust matrix with a minimal Markov basis $M$. Then $M$ is distance-reducing if and only if $M$ distance-reduces the circuits of $A$.
\end{corollary}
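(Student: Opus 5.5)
The forward direction is immediate: a distance-reducing $M$ distance-reduces every element of $\ker_\ZZ(A)$, in particular the circuits. For the converse I will argue by contraposition using Theorem~\ref{thm:A3class}. Suppose $M$ distance-reduces all circuits of $A$ but is not distance-reducing.

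Theorem~\ref{thm:A3class} then gives three facts: $\omega=[3]$, $T$ is a complete intersection, and $M$ contains two circuits. Its final sentence also identifies exactly what $M$ fails on. The only element of $\operatorname{Gr}(A)$ not distance-reduced by $M$ is the unique circuit $z$ of $A$ not contained in $M$. By Remark~\ref{rem:circuitsA3} and the bijection $D$, which preserves circuits by \cite[Theorem 1.11]{petrovic2018bouquet}, $A$ has exactly three circuits, so this $z$ exists. So $z$ is a circuit that $M$ does not distance-reduce, contradicting the hypothesis.

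The only point needing care is that the failure is witnessed inside the Graver basis, which is what makes the final clause of Theorem~\ref{thm:A3class} applicable. If $M$ is not distance-reducing, then by \cite[Theorem 7.4]{CK_2024} it fails on some Graver element. The final clause of Theorem~\ref{thm:A3class} says every such failure is the missing circuit. Hence the hypothesis that all circuits are distance-reduced forces $M$ to distance-reduce all of $\operatorname{Gr}(A)$, and so $M$ is distance-reducing.

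I expect no step to be a real obstacle, since the content lies entirely in Theorem~\ref{thm:A3class} and Proposition~\ref{prop:allcircuits}. The only thing to check explicitly is that the non-reduced element produced there is indeed a circuit, which Proposition~\ref{prop:allcircuits} establishes via $\mathcal{C}(A)=\{z_1,z_2,z\}$.
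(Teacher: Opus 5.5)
Your proposal is correct and is essentially the paper's argument. The paper gives no separate proof and reads the corollary off Theorem~\ref{thm:A3class}: that proof shows, via \cite[Theorem 7.4]{CK_2024} and Proposition~\ref{prop:allcircuits}, that any Graver element not distance-reduced by $M$ is the circuit missing from $M$, so distance-reducing the circuits forces $M$ to be distance-reducing.
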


We now present an example.

\begin{example} \label{ex: A3}
    We will take $T=\begin{bmatrix}1&2&3\end{bmatrix}$. By Theorem \ref{thm:A3class}, if we pick any signature which is not $\begin{pmatrix}+&+&+\end{pmatrix}$ i.e., $\omega \neq \{1,2,3\}$, then any minimal Markov basis for a homogeneous $T_{\omega}$-robust matrix will be distance-reducing.

    Let us pick $\omega=\{2\}$ so that the signature will be $\begin{pmatrix}-&+&-\end{pmatrix}$. We can then construct a homogeneous $T_{\{2\}}$-robust ideal using the inverse construction from \cite[Section 2]{petrovic2018bouquet} and Theorem \ref{thm:homrow}. For the inverse construction, we take $T$ to be our starting matrix and let $\mathbf{c}_1=(2,-1)$, $\mathbf{c}_2=(1,1)$ and $\mathbf{c}_3=(4,-1)$ be our primitive vectors, giving us the generalised Lawrence matrix  
    $$A=\begin{bmatrix}
        0&-1&2&0&0&-3 \\
        1&2&0&0&0&0\\
        0&0&-1&1&0&0\\
        0&0&0&0&1&4
    \end{bmatrix}.$$
    We ensured that the signature of $A$ is $(-,+,-)$ since $\mathbf{c}_1$ and $\mathbf{c}_3$ have negative components while $\mathbf{c}_2$ doesn't. Also, since $I_T$ is simple, then, up to rows of zeroes, we have $A_B=T$. Finally, we picked our primitive vectors such that $\mathbf{c}=\begin{pmatrix}\mathbf{c}_1&\mathbf{c}_2&\mathbf{c}_3\end{pmatrix}=\begin{pmatrix}1&2&3\end{pmatrix}$. Thus, by Theorem \ref{thm:homrow}, $\mathbf{c}$ is in the rowspan of $T$ and hence $A_B$ and this implies that $I_A$ is homogeneous.
    
    
    
    Using the \textit{AllMarkovBases} package \cite{CM_2025} for Macaulay2 \cite{M2}, we compute that $A$ has a unique minimal Markov basis
    $$M=\begin{bmatrix}m_1\\m_2\\m_3\\m_4\end{bmatrix}=\begin{bmatrix}
        2&-1&1&1&-4&1\\
        0&0&3&3&-8&2\\
        2&-1&-2&-2&4&-1\\
        4&-2&-1&-1&0&0
    \end{bmatrix}.$$
    The only element which is in the Graver basis but not in $M$ is $v=\begin{bmatrix}6&-3&0&0&-4&1\end{bmatrix}$. Using Proposition \ref{prop:drredef}, we see that $v$ is distance-reduced by both $m_1$ and $m_4$. Checking this explicitly for $m_1$, we see that $m_1^- \le v^-$ and $\operatorname{Supp}(m_1^+) \cap \operatorname{Supp}(v^+)=\{1,3,4,6\} \cap \{1,6\}=\{1,6\}$ which is non-empty. In addition, $M$ trivially distance-reduces all of its own elements. By \cite[Theorem 7.4]{CK_2024}, it is enough to check if $M$ distance-reduces all the elements of the Graver basis to check if it is distance-reducing. Thus, $M$ is distance-reducing, which aligns with Theorem \ref{thm:A3class}.

    For the case when $\omega=[3]$ i.e., the signature is $(+,+,+)$, we can take
    $$A=\begin{bmatrix}
        1&2&0&3&0 \\
        0&-1&1&0&0\\
        0&0&0&-2&1
    \end{bmatrix}.$$
The matrix $A$ is a generalised Lawrence matrix constructed from the matrix $T$ and the primitive vectors $\mathbf{c}_1=(1)$, $\mathbf{c}_2=(1,1)$ and $\mathbf{c}_3=(1,2)$. Using Theorem \ref{thm:homrow} and similar checks as before, we can confirm that $A$ is indeed a homogeneous $T_{[3]}$-robust ideal. 
    
    This time $A$ has two minimal Markov bases,
  $$M_1=\begin{bmatrix}m\\m_1\end{bmatrix}=\begin{bmatrix}
        2&-1&-1&0&0\\
        1&1&1&-1&-2
    \end{bmatrix} \quad \text{and} \quad M_2=\begin{bmatrix}m\\m_2\end{bmatrix}=\begin{bmatrix}
        2&-1&-1&0&0\\
        3&0&0&-1&-2
    \end{bmatrix}.$$
The only elements in the Graver basis not in $M_1$ or $M_2$ are $v_1=\begin{bmatrix}1&-2&-2&1&2\end{bmatrix}$ and $v_2=\begin{bmatrix}0&3&3&-2&-4\end{bmatrix}$. Using Proposition \ref{prop:drredef}, we can see that $m_2$, $v_1$ and $v_2$ are all distance-reduced by $m_1$ so $M_1$ is distance-reducing. This aligns with Theorem~\ref{thm:A3class} since $m_1$ is not a circuit so $M$ does not contain two circuits and thus the third condition of Theorem~\ref{thm:A3class} is not fulfilled. However, since $m$ and $m_2$ are both circuits of $A$, $M_2$ does fulfil all $3$ conditions of Theorem \ref{thm:A3class} so Theorem \ref{thm:A3class} predicts that $M_2$ will not be distance-reducing and it will fail to distance-reduce the only circuit of $A$ not in $M_2$, which is $v_2$. Indeed, using Proposition \ref{prop:drredef}, $m_1$ is distance-reduced by $m_2$ and $v_1$ is distance-reduced by $m$. However, neither $m$ nor $m_2$ distance-reduce $v_2$ and thus $M_2$ distance-reduces all elements of the Graver basis except $v_2$.
\end{example}

In Theorem \ref{thm:A3class}, we managed to extend the distance-reduction properties of $T$ to $A$ leveraging the fact that $T$ was a monomial curve in $\mathbb{A}^3$. However, if we let $T$ be a monomial curve in $\mathbb{A}^n$ and $I_A$ be a homogeneous $T_{\omega}$-robust ideal, extending the distance-reduction properties of $T$ to $A$ is much more difficult. For example, we consider the property that, a minimal Markov basis is distance-reducing if and only if it distance-reduces the circuits. Then we can ask if there is a relation between this property holding for $T$ and this property holding for $A$.

For $\omega \neq [s]$, it seems unlikely there will be a relation since $T$ and $A$ might have different numbers of minimal Markov bases. But, for $\omega=[s]$, by \cite[Theorem 2.2]{kosta2023strongly}, we have a bijection between the minimal Markov bases of $T$ and $A$. However, we now present an example where the property does not hold for $T$ but does hold for $A$.

\begin{example}
    Take $T=\begin{bmatrix}11&19&25&26\end{bmatrix}$. Then, using the \textit{mbfailures} command in \cite{githubM2DistanceReduction},
    as demonstrated in Example \ref{ex: same bouquet diff DR}, we see that $T$ has a unique minimal Markov basis $M$ that distance-reduces every element of $\operatorname{Gr}(T)$ apart from $z=\begin{pmatrix}-1&2&1&-2\end{pmatrix}$. Furthermore, $z$ is not a circuit. However, taking any homogeneous $T_{[4]}$-robust matrix $A$ (this choice does not matter here since, by Theorem \ref{thm:globaldr}, all homogeneous $T_{[4]}$-robust matrices will have the same distance-reducing properties), then $A$ also has a unique minimal Markov basis (since the minimal Markov bases of $T$ and $A$ are in bijection by \cite[Theorem 2.2]{kosta2023strongly}) which distance-reduces all elements of $\operatorname{Gr}(A)$ including $D(z)$. 

    We note that $\hat z=\begin{pmatrix}-4&1&1&0\end{pmatrix} \in M $ where $\hat z^+ \le z^+$ and $\operatorname{Supp}(\hat z^-) \cap \operatorname{Supp}(z^-)=\{1\}$. Using these properties, we can infer that $D(\hat z)^+ \le D(z)^+$ and $\operatorname{Supp}(D(\hat z)^-) \cap \operatorname{Supp}(D(z)^-) \neq \emptyset$ so, by Proposition \ref{prop:drredef}, $D(\hat z)$ distance-reduces $D(z)$. However, since $T$ is not homogeneous we can't use the fact that $\hat z^+ \le z^+$ and $\operatorname{Supp}(\hat z^-) \cap \operatorname{Supp}(z^-)=\{1\}$ to imply that $\hat z$ distance-reduces $z$ since Proposition \ref{prop:drredef} does not apply. In fact, $\hat z$ doesn't distance-reduce $z$ due to the fact that $\lVert z-\hat z\rVert=6=\lVert z \rVert$ so condition (\ref{item:drbouquet2}) of Definition \ref{def:drbouquet} is not satisfied.
\end{example}

\section*{Acknowledgements}
D. Kosta gratefully acknowledges funding from the Royal Society Dorothy Hodgkin Research Fellowship DHF$\backslash$R1$\backslash$201246 and DHF$\backslash$R$\backslash$251007.

\bibliographystyle{plain}
\bibliography{references}

\appendix

\section{Proof of Proposition \ref{prop:3graphs}}

Let $a_1,\dots ,a_n$ denote the columns of our matrix $A$ and let $e_i$ denote the $i$th unit vector.

\begin{definition}[\cite{CKT_2007}, Section 2] \label{def:partord}
    We define a partial ordering on $\mathbb{Z}^d$, where, given $b,c \in \mathbb{Z}^d$, we say $b \sqsubseteq c$ if $c-b \in \NN A$. If $b \sqsubseteq c$ and $b \neq c$ then we write $b \sqsubset c$.
\end{definition}

Using this partial ordering, we can now rewrite the ideal $I_{A,b}$ from Definition \ref{def:fiberGraph} as 
$$I_{A,b} = \langle x^u-x^v \,|\,  \deg_A(x^u)=\deg_A(x^v) \sqsubset b \rangle.$$
Recall that we then construct the graph $G_b$ with vertex set $\mathcal F_b$ and edge set 
$E(G_b) = \{\{u,v\} \,|\, x^u-x^v \in I_{A,b}\}$.
We also recall Definition \ref{def:3difgra} where we define the graph $G'_b$ also on the vertex set $\mathcal{F}_b$ whose edges are $E(G'_b)=\{\{u,v\} \,|\, \operatorname{Supp}(u) \cap \operatorname{Supp}(v) \neq \emptyset\}$.

\begin{lemma}[\cite{CKT_2007}, Proposition 2.2] \label{lem:CKTprop2.2}
     Each connected component of $G_b$ is a complete subgraph.
\end{lemma}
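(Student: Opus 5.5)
Each connected component of $G_b$ is a complete subgraph; this is \cite[Proposition~2.2]{CKT_2007}. The plan is to show that adjacency in $G_b$ is transitive. Since the edge relation is obviously symmetric, transitivity forces every connected component to be a clique. The natural tool is that $I_{A,b}$ is an ideal. Suppose $\{u,v\}$ and $\{v,w\}$ are edges of $G_b$, so $x^u-x^v\in I_{A,b}$ and $x^v-x^w\in I_{A,b}$. Adding these gives $x^u-x^w\in I_{A,b}$, hence $\{u,w\}\in E(G_b)$ whenever $u\neq w$.

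The same sum argument also gives the result directly for a longer path $u=u_0,u_1,\dots,u_r=w$ in $G_b$. The telescoping sum
\[
x^{u}-x^{w}=\sum_{l=1}^{r}\bigl(x^{u_{l-1}}-x^{u_l}\bigr)
\]
lies in $I_{A,b}$, so $u$ and $w$ are adjacent. The component is therefore complete.

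The one subtle point, which I expect to be the only real obstacle, is the edge definition. Edges are pairs $\{u,v\}$ with $x^u-x^v$ in the \emph{ideal} $I_{A,b}$, not merely among its listed generators. Membership of $x^u-x^w$ in the ideal is all the definition of $E(G_b)$ asks, so the closure step above is legitimate.

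It may also be worth noting the more concrete description. When $u$ and $v$ lie in $\mathcal F_b$, the binomial $x^u-x^v$ lies in $I_{A,b}$ exactly when $u$ and $v$ are connected by moves $x^{p}-x^{q}$ of degree $\sqsubset b$, each applied with some monomial multiplier. This is the standard membership criterion for binomial ideals: a binomial $x^u-x^v$ lies in the ideal generated by binomials $x^{p_i}-x^{q_i}$ iff $u$ and $v$ are connected by steps of the form $m+p_i \leftrightarrow m+q_i$ with $m\in\NN^n$. This description is not needed for Lemma~\ref{lem:CKTprop2.2} itself, but it is what links $G_b$ to $G'_b$ in the subsequent comparison of components (Proposition~\ref{prop:3graphs}). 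There one shows that a shared support index $i$ lets one factor out $x_i$, giving a binomial of degree $b-a_i\sqsubset b$, and conversely that chains of such moves stay within one $G'_b$-component.
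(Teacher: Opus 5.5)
Your proof is correct and is essentially the paper's argument. Both show that adjacency in $G_b$ is transitive by adding $x^u-x^v$ and $x^v-x^w$ inside the ideal $I_{A,b}$, then extend along a path; you use a telescoping sum where the paper uses induction, which is the same step.
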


No proof of Lemma \ref{lem:CKTprop2.2} is provided in \cite{CKT_2007} so we provide a short proof here.

\begin{proof}
    Suppose $\{u,v\} \in E(G_b)$ and $\{v,w\} \in E(G_b)$ for some $u,v,w \in \mathcal{F}_b$, then $x^{u}-x^{v},x^{v}-x^{w} \in I_{A,b}$. Adding them together, we have $x^{u}-x^{w} \in I_{A,b}$ and then $\{u,w\} \in E(G_b)$. Thus, if $u$ and $v$ lie in the same connected component of $G_b$, there is a path $u=w_0,\dots,w_p=v$ where $\{w_{l-1},w_l\} \in E(G_b)$ for all $l \in [p]$. Thus, by our earlier observation and by induction, $\{w_0,w_l\} \in E(G_b)$ for all $l \in[p]$ and so, in particular, $\{u=w_0,v=w_p\} \in E(G_b)$.
\end{proof}

\begin{lemma} \label{lem:edgeinclusion}
    We have the inclusion $E(G'_b) \subseteq E(G_b)$.
\end{lemma}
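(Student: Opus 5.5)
To prove $E(G'_b) \subseteq E(G_b)$, I would take an arbitrary edge $\{u,v\} \in E(G'_b)$, so $u,v \in \mathcal{F}_b$ with $\operatorname{Supp}(u) \cap \operatorname{Supp}(v) \neq \emptyset$, and exhibit the binomial $x^u - x^v$ explicitly as an element of $I_{A,b}$. The idea is to factor out a common variable. Pick $i \in \operatorname{Supp}(u) \cap \operatorname{Supp}(v)$. Then $u - e_i$ and $v - e_i$ both lie in $\NN^n$, and
\[
x^u - x^v = x_i\left(x^{u-e_i} - x^{v-e_i}\right).
\]

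Next I would check that the inner binomial is one of the listed generators of $I_{A,b}$. Its two monomials have the same $A$-degree, since $A(u-e_i) = b - a_i = A(v-e_i)$. Moreover,
\[
b - \deg_A(x^{v-e_i}) = a_i = A e_i \in \NN A,
\]
and $a_i \neq 0$. This uses the standing assumption $\ker_\ZZ(A) \cap \NN^n = \{0\}$: if $a_i = 0$, then $e_i$ would be a nonzero element of $\ker_\ZZ(A) \cap \NN^n$. Hence $b - \deg_A(x^{v-e_i}) \in \NN A \setminus \{0\}$, equivalently $\deg_A(x^{v-e_i}) \sqsubset b$. So $x^{u-e_i} - x^{v-e_i}$ is a generator of $I_{A,b}$.

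Since $I_{A,b}$ is an ideal, it is closed under multiplication by $x_i$, so $x^u - x^v \in I_{A,b}$. By definition of $G_b$, this gives $\{u,v\} \in E(G_b)$.

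The only delicate step is justifying $a_i \neq 0$ so that the degree is strictly below $b$. That step rests on the positivity hypothesis $\ker_\ZZ(A) \cap \NN^n = \{0\}$. The rest is bookkeeping.
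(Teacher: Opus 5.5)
Your proposal is correct and is essentially the paper's own proof. Like the paper, you factor out a common variable $x_i$ with $i \in \operatorname{Supp}(u)\cap\operatorname{Supp}(v)$, and you use $\ker_\ZZ(A)\cap\NN^n=\{0\}$ to get $a_i \neq 0$, so that $x^{u-e_i}-x^{v-e_i}$ is a generator of $I_{A,b}$ of degree $b-a_i \sqsubset b$.
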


\begin{proof}
    Take $u,v \in \mathcal{F}_b$ such that $\{u,v\} \in E(G'_b)$. Thus, $\operatorname{Supp}(u) \cap \operatorname{Supp}(v)$ is non-empty and we can find some $j \in \operatorname{Supp}(u) \cap \operatorname{Supp}(v)$. Now define $u':=u-e_j$ and $v':=v-e_j$ where, by the way we found $j$, the vectors $u',v' \in \NN^n$ still. Also note that $\deg_A(x^{u'})=Au'=Au-Ae_j=b-a_j$ and, similarly, $\deg_A(x^{v'})=b-a_j$. If $a_j=0$ then $Ae_j=0$ so $e_j \in \ker_\ZZ(A) \cap \NN^n$ which is a contradiction to our assumption that $ \ker_\ZZ(A) \cap \NN^n=\{0\}$. Thus, $a_j \neq 0$, so $b-a_j \neq b$. In addition, $b-(b-a_j)=a_j = Ae_j \in \NN A$ so $b-a_j \sqsubset b$. Putting this together we have $\deg_A(x^{u'})=\deg_A(x^{v'}) \sqsubset b$ and thus, $x^{u'}-x^{v'} \in I_{A,b}$. Finally, $x^{u}-x^{v}=x_j(x^{u'}-x^{v'}) \in I_{A,b}$ so $\{u,v\} \in E(G_b)$.
\end{proof}

\begin{proposition} \label{prop:samegraphccs}
    Graphs $G_b$ and $G'_b$ have the same connected components.
\end{proposition}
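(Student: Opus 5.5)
We need to show $G_b$ and $G'_b$ have the same connected components. By Lemma~\ref{lem:edgeinclusion}, $E(G'_b)\subseteq E(G_b)$, so every connected component of $G'_b$ lies inside a component of $G_b$. It therefore suffices to show the converse: if $\{u,v\}\in E(G_b)$, then $u$ and $v$ are joined by a path in $G'_b$. By Lemma~\ref{lem:CKTprop2.2}, components of $G_b$ are complete, so it is equivalent to show that any two vertices in the same component of $G_b$ are connected in $G'_b$. Since adjacency in $G_b$ is transitive, it is enough to treat a single edge.

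Fix an edge $\{u,v\}\in E(G_b)$, so $x^u-x^v\in I_{A,b}$. The ideal $I_{A,b}$ is generated by binomials $x^{p}-x^{q}$ with $Ap=Aq=c\sqsubset b$. We write $x^u-x^v$ as a combination $\sum_l \lambda_l x^{w_l}(x^{p_l}-x^{q_l})$ of such generators multiplied by monomials. The standard argument (as in \cite{sturmfels1996grobner} or \cite{CKT_2007}) then lets us rewrite this as a walk in the fiber: there is a sequence $u=u_0,u_1,\dots,u_r=v$ in $\mathcal F_b$ with $u_{l-1}=w_l+p_l$ and $u_l=w_l+q_l$ for some generator $x^{p_l}-x^{q_l}$ of $I_{A,b}$ and some $w_l\in\NN^n$. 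The degree of each term is $b$, so $A w_l=b-c_l$. Because $c_l\sqsubset b$ strictly, we have $b-c_l\in\NN A\setminus\{0\}$.

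The key step is to show that every such consecutive pair is an edge of $G'_b$. Since $Aw_l=b-c_l\neq 0$, we get $w_l\neq 0$. Pick $j\in\operatorname{Supp}(w_l)$. Then $j\in\operatorname{Supp}(u_{l-1})\cap\operatorname{Supp}(u_l)$, so $\{u_{l-1},u_l\}\in E(G'_b)$ (or the two vertices coincide). Hence $u$ and $v$ are connected in $G'_b$, and the components of the two graphs coincide.

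The main obstacle is justifying the passage from ideal membership to a fiber walk whose steps are all "generator times nonzero monomial". Here I would invoke the classical lemma: if a binomial $x^u-x^v$ lies in the ideal generated by a set of homogeneous binomials, then $u$ and $v$ are connected by moves from that set, each applied with a cofactor monomial. The proof expands the combination, matches monomials of multidegree $b$, and uses a cancellation argument on the resulting graph of monomials. I would restate it in this graded setting to ensure that every term has $A$-degree $b$. Homogeneity of all generators with respect to the $\NN A$-grading guarantees this, after discarding components of other degrees.
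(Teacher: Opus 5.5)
Your proposal is correct and follows essentially the same route as the paper. One inclusion comes from Lemma~\ref{lem:edgeinclusion}. For the other, $x^u-x^v\in I_{A,b}$ is rewritten as a chain of fiber moves $u_{l-1}=w_l+p_l$, $u_l=w_l+q_l$; each cofactor $w_l$ is nonzero because $b-c_l\in\NN A\setminus\{0\}$, so consecutive vertices share a support index. Your sketch of the classical connected-component argument for obtaining this chain fills in the step that the paper simply asserts with ``we can specify that\ldots''.
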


\begin{proof}
    We first show that, if $u,v$ are in the same connected component of $G_b$, then $u,v$ are in the same connected component of $G'_b$. By Lemma \ref{lem:CKTprop2.2}, if we take $u,v \in \mathcal{F}_b$ such that $u,v$ are in the same connected component of $G_b$ then $\{u,v\} \in E(G_b)$. Then, $x^u-x^v \in I_{A,b}$ so we can write $x^u-x^v=\sum_{i=1}^p x^{c_i}(x^{u_i}-x^{v_i})$ for some $p \in \NN$ where $c_i,u_i,v_i \in \NN^n$ and $\deg_A(x^{u_i})=\deg_A(x^{v_i})=b_i \sqsubset b$ for $i \in [p]$. Furthermore, we can specify that $x^u=x^{c_1} x^{u_1}$, $x^v=x^{c_p} x^{v_p}$ and $x^{c_i}x^{v_i}=x^{c_{i+1}}x^{u_{i+1}}$ for $i \in [p-1]$. Thus, for any $i \in [p]$, $\deg_A(x^{c_i}x^{u_i})=\deg_A(x^{c_i}x^{v_i})=b$ and, since $b_i \neq b$, then $c_i \neq 0$. We can now define a path $w_i \in \mathcal{F}_b$ for $0 \le i \le p$ where $w_0=c_1+u_1$ and  $w_i=c_i+v_i$ for $i \in [p]$. In particular, $w_0=u$ and $w_p=v$ so it is a path from $u$ to $v$. Also, $c_i \neq 0$ so we can take $j_i \in \operatorname{Supp}(c_i)$ for all $i \in [p]$. Since $c_i \le c_i+v_i=w_i$ and $c_i \le c_i+u_i=c_{i-1}+v_{i-1}=w_{i-1}$ then $j_i \in \operatorname{Supp}(w_i) \cap \operatorname{Supp}(w_{i-1})$. Thus, $\{w_i,w_{i-1}\} \in E(G'_b)$ for all $i \in [p]$ and $u=w_0,\dots,w_p=v$ is a path in $G'_b$. Thus, $u,v$ lie in the same connected component of $G'_b$.

    On the other hand, by Lemma \ref{lem:edgeinclusion}, if $u,v$ are in the same connected component of $G'_b$ then they are in the same connected component of $G_b$ as well. Thus we have showed that the connected components of both graphs are the same.
\end{proof}



\end{document}